\newcommand{\nc}{\newcommand}
\nc{\one}{\mbox{\bf 1}}
\nc{\invtensor}{\underset{\leftarrow}{\otimes}}
\nc{\const}{\operatorname{const}}
\newcolumntype{R}{>{\raggedleft\arraybackslash}p{6cm}}
\nc{\ad}{\operatorname{ad}}
\nc{\tr}{\operatorname{tr}}
\nc{\Arc}{\operatorname{Arc}}
\nc{\arc}{\operatorname{arc}}
\nc{\Ch}{\operatorname{Ch}}
\nc{\diag}{\operatorname{diag}}
\nc{\tp}{\operatorname{top}}
\nc{\rank}{\operatorname{rank}}
\nc{\corank}{\operatorname{corank}}
\nc{\codim}{\operatorname{codim}}
\nc{\sdim}{\operatorname{sdim}}
\nc{\mult}{\operatorname{mult}}
\nc{\ds}{\operatorname{ds}}
\nc{\tail}{\operatorname{tail}}
\nc{\howl}{\operatorname{howl}}
\nc{\spn}{\operatorname{span}}
\nc{\Sym}{\operatorname{Sym}}
\nc{\sym}{\operatorname{sym}}
\nc{\id}{\operatorname{id}}
\nc{\Id}{\operatorname{Id}}
\nc{\Ree}{\operatorname{Re}}
\nc{\hi}{\operatorname{hi}}
\nc{\htt}{\operatorname{ht}}
\nc{\at}{\operatorname{at}}
\nc{\str}{\operatorname{str}}
\nc{\Iso}{\operatorname{Iso}}
\nc{\Ker}{\operatorname{Ker}}
\nc{\rker}{\operatorname{rKer}}
\nc{\im}{\operatorname{Im}}
\nc{\osp}{\mathfrak{osp}}
\nc{\sgn}{\operatorname{sgn}}
\nc{\Mod}{\operatorname{Mod}}
\nc{\DS}{\operatorname{DS}}
\nc{\Soc}{\operatorname{Soc}}
\nc{\Inj}{\operatorname{Inj}}
\nc{\Hom}{\operatorname{Hom}}
\nc{\End}{\operatorname{End}}
\nc{\supp}{\operatorname{supp}}
\nc{\Card}{\operatorname{Card}}
\nc{\Ann}{\operatorname{Ann}}
\nc{\Ind}{\operatorname{Ind}}
\nc{\Coind}{\operatorname{Coind}}
\nc{\wt}{\operatorname{wt}}
\nc{\ch}{\operatorname{ch}}
\nc{\sch}{\operatorname{sch}}
\nc{\mdim}{\operatorname{mdim}}
\nc{\Stab}{\operatorname{Stab}}
\nc{\Sch}{\operatorname{Sch}}
\nc{\Irr}{\operatorname{Irr}}
\nc{\Spec}{\operatorname{Spec}}
\nc{\Res}{\operatorname{Res}}
\nc{\Aut}{\operatorname{Aut}}
\nc{\Ext}{\operatorname{Ext}}
\nc{\Prec}{\operatorname{Prec}}
\nc{\Fract}{\operatorname{Fract}}
\nc{\gr}{\operatorname{gr}}
\nc{\deff}{\operatorname{def}}
\nc{\core}{\operatorname{core}}
\nc{\HC}{\operatorname{HC}}
\nc{\dpth}{\operatorname{dpth}}
\nc{\jJ}{\operatorname{J}}
\nc{\KW}{\operatorname{KW}}
\nc{\red}{\operatorname{red}}
\nc{\pari}{\operatorname{dex}}
\nc{\hwt}{\operatorname{hwt}}
\nc{\wdchi}{\widetilde{\chi}}
\nc{\wdH}{\widetilde{H}}
\nc{\wdN}{\widetilde{N}}
\nc{\wdM}{\widetilde{M}}
\nc{\wdO}{\widetilde{O}}
\nc{\wdR}{\widetilde{R}}
\nc{\wdV}{\widetilde{V}}
\nc{\wdC}{\widetilde{C}}
\nc{\Obj}{\operatorname{Obj}}
\nc{\Dglie}{\operatorname{{\mathcal D}glie}}
\nc{\Fin}{\operatorname{\mathcal{F}in}}
\nc{\pr}{\operatorname{pr}}
\nc{\Adm}{\operatorname{\mathcal{A}dm}}
\nc{\Sg}{{\cS(\fg)}}
\nc{\Shg}{{\cS(\fhg)}}
\nc{\Ug}{{\cU(\fg)}}
\nc{\Uhg}{{\cU(\fhg)}}
\nc{\Sh}{{\cS(\fh)}}
\nc{\Uh}{{\cU(\fh)}}
\nc{\Uhh}{{\cU(\fhh)}}
\nc{\Zg}{{{\mathcal{Z}}(\fg)}}
\nc{\Vir}{{\mathcal{V}ir}}
\nc{\NS}{{\mathcal{N}S}}
\nc{\tZg}{{\widetilde{\mathcal Z}({\mathfrak g})}}
\nc{\Zk}{{\mathcal Z}({\mathfrak k})}
\newcommand{\cE}{\mathcal{E}}
\newcommand{\cP}{\mathcal{P}}
\nc{\Up}{{\mathcal U}({\mathfrak p})}
\nc{\Ah}{{\mathcal A}({\mathfrak h})}
\nc{\Ag}{{\mathcal A}({\mathfrak g})}
\nc{\Ap}{{\mathcal A}({\mathfrak p})}
\nc{\Zp}{{\mathcal Z}({\mathfrak p})}
\nc{\cR}{\mathcal R}
\nc{\cS}{\mathcal S}
\nc{\cT}{\mathcal{T}}
\nc{\cC}{\mathcal C}
\nc{\cA}{\mathcal A}
\nc{\cU}{\mathcal U}
\nc{\cZ}{\mathcal Z}
\nc{\cM}{\mathcal M}
\nc{\cL}{\mathcal L}
\nc{\cF}{\mathcal F}
\nc{\fg}{\mathfrak g}
\nc{\cB}{\mathcal{B}}
\nc{\F}{\tilde{\mathcal F}}
\nc{\fo}{\mathfrak o}
\nc{\CO}{\mathcal O}
\nc{\CR}{\mathcal R}
\nc{\cK}{\mathcal{K}}
\nc{\cW}{\mathcal{W}}
\nc{\bM}{\mathbf{M}}
\nc{\bL}{\mathbf{L}}
\nc{\bN}{\mathbf{N}}
\nc{\zq}{\mathpzc q}
\nc{\fl}{\mathfrak l}
\nc{\fn}{\mathfrak n}
\nc{\fm}{\mathfrak m}
\nc{\fp}{\mathfrak p}
\nc{\fh}{\mathfrak h}
\nc{\ft}{\mathfrak t}
\nc{\fk}{\mathfrak k}
\nc{\fb}{\mathfrak b}
\nc{\fs}{\mathfrak s}
\nc{\fB}{\mathfrak B}
\nc{\vareps}{\varepsilon}
\nc{\varesp}{\varepsilon}
\nc{\veps}{\varepsilon}
\nc{\fsl}{\mathfrak{sl}}
\nc{\fgl}{\mathfrak{gl}}
\nc{\fso}{\mathfrak{so}}
\nc{\fosp}{\mathfrak{osp}}
\nc{\fsp}{\mathfrak{sp}}
\nc{\fq}{\mathfrak q}
\nc{\fsq}{\mathfrak{sq}}
\nc{\fpsl}{\mathfrak{psl}}
\nc{\fhg}{\hat{\fg}}
\nc{\fhn}{\hat{\fn}}
\nc{\fhh}{\hat{\fh}}
\nc{\fhb}{\hat{\fb}}
\nc{\hrho}{\hat{\rho}}
\nc{\hsl}{\hat{\fsl}}
\nc{\fpo}{\mathfrak{po}}
\nc{\dirlim}{\underset{\rightarrow}{\lim}\,}
\nc{\nen}{\newenvironment}
\nc{\ol}{\overline}
\nc{\ul}{\underline}
\nc{\ra}{\rightarrow}
\nc{\lra}{\longrightarrow}
\nc{\Lra}{\Longrightarrow}
\nc{\bo}{\bar{1}}
\nc{\Lla}{\Longleftarrow}
\nc{\Llra}{\Longleftrightarrow}
\nc{\thla}{\twoheadleftarrow}
\nc{\lang}{(}
\nc{\rang}{)}
\nc{\hra}{\hookrightarrow}
\nc{\iso}{\overset{\sim}{\lra}}
\nc{\ssubset}{\underset{\not=}{\subset}}
\nc{\vac}{|0\rangle}
\nc{\simka}{{\ \scriptscriptstyle _{{\sim}}^\text{\tiny{k}}\ }}
\newcommand{\ssum}{\mathit{sum}}
\nc{\Thm}[1]{Theorem~\ref{#1}}
\nc{\Prop}[1]{Proposition~\ref{#1}}
\nc{\Lem}[1]{Lemma~\ref{#1}}
\nc{\Cor}[1]{Corollary~\ref{#1}}
\nc{\Conj}[1]{Conjecture~\ref{#1}}
\nc{\Claim}[1]{Claim~\ref{#1}}
\nc{\Defn}[1]{Definition~\ref{#1}}
\nc{\Exa}[1]{Example~\ref{#1}}
\nc{\Rem}[1]{Remark~\ref{#1}}
\nc{\Note}[1]{Note~\ref{#1}}
\nc{\Quest}[1]{Question~\ref{#1}}
\nc{\Hyp}[1]{Hypoth\`ese~\ref{#1}}
\begin{document}
\setcounter{section}{0}
\setcounter{tocdepth}{1}
\title[Character formulas]{Gruson-Serganova character formulas and the Duflo-Serganova cohomology functor}
\author{{\rm M. Gorelik, Th. Heidersdorf}}

\address{T. H.: Mathematisches Institut Universit\"at Bonn, Germany}
\email{heidersdorf.thorsten@gmail.com}
\address{M. G..: Weizmann Institute of Science, Rehovot, Israel}
\email{maria.gorelik@gmail.com}

\date{}

\begin{abstract} We establish an explicit formula for the character of an irreducible finite-dimensional representation of $\mathfrak{gl}(m|n)$. The formula is a finite sum with integer coefficients in terms of a basis $\mathcal{E}_{\mu}$ (Euler characters) of the character ring. We prove a simple formula for the behaviour of the ``superversion'' of $\mathcal{E}_{\mu}$ in the $\mathfrak{gl}(m|n)$ and $\mathfrak{osp}(m|2n)$-case under the map $ds$ on the supercharacter ring induced by the Duflo-Serganova cohomology functor $DS$. As an application we get combinatorial formulas for superdimensions, dimensions and $\mathfrak{g}_0$-decompositions  for $\mathfrak{gl}(m|n)$ and $\mathfrak{osp}(m|2n)$.
\end{abstract}

\subjclass[2010]{17B10, 17B20, 17B55, 18D10.}

\medskip

\keywords{Representations of supergroups, Character formula, Duflo-Serganova functor, Superdimensions, Tame modules, Orthosymplectic Lie superalgebra, General linear Lie superalgebra}

\maketitle


\section{Introduction}
Let $\fg$ be  a finite-dimensional Kac-Moody superalgebra. Denote by
$W$ the Weyl group of $\fg$.

\subsection{A brief history of character formulas}\label{intro1}
Let $L(\lambda)$ be a simple finite-dimensional $\fg$-module. In 1977 V.~Kac~\cite{Kac-repr}
showed that
the Weyl character formula  
$$Re^{\rho}\ch L(\lambda)= \sum_{w\in W}\sgn(w) w(e^{\lambda+\rho}),$$
 where 
 $R$ is the Weyl denominator and $\rho$ is the Weyl vector holds if 
 $L(\lambda)$ is typical.
In 1980 I.~Bernstein and D.~Leites~\cite{BL}
established for   $\fg=\fgl(1|n)$ the character formula 
\begin{equation}\label{BL}
Re^{\rho}\ch L(\lambda)=\sum_{w\in W}\sgn(w) w\bigl(\frac{e^{\lambda}}{(1+e^{-\beta})}\bigr) 
\end{equation} 
where 
$\beta \in\Delta^+_1$ satisfies $(\beta|\lambda)=0$. This formula was extended to 
the $\osp(2|2n)$-case  in~\cite{vdJ} and to $\fgl(m|n)$-modules of atypicality one
in~\cite{vdJHKTM}. In 1998 J.~Germoni produced similar character formulas for 
the cases $\osp(3|2)$ and 
$D(2|1;a)$; except for the case of the standard $\osp(3|2)$-module  Germoni's formula is very similar to~(\ref{BL}): the only difference is 
the factor $1/2$ appearing in the right-hand side in certain cases
(other formulas were obtained earlier by van der Jeugt in~\cite{vdJ32},\cite{vdJD21}).
In 1990 J. van der Jeugt, J. Hughes, R.C. King and 
J. Thierry-Mieg~\cite{vdJHKTM} suggested to write  the character formula in the general $\mathfrak{gl}(m|n)$-case as a sum of terms
$\sum_{w\in W}\sgn(w) w\bigl(\frac{e^{\lambda}}{\prod_{\beta\in U}(1+e^{-\beta})}\bigr)$
for some $U\subset\Delta_1$ satisfying $(U|\lambda)=0$.

In 1994 V.~Kac and M.~Wakimoto~\cite{KW} conjectured that
\begin{equation}\label{KWformula}
Re^{\rho}\ch L(\lambda)=j^{-1}\sum_{w\in W}\sgn(w) w\bigl(\frac{e^{\lambda+\rho}}{\prod_{\beta\in S}(1+e^{-\beta})}\bigr)\end{equation}
if the following conditions (which we will call KW-conditions) hold:
$\lambda$ is the highest weight of $L$ with respect to a base $\Sigma$
containing $S$, $(S|\lambda+\rho)=(S|S)=0$ and the cardinality of $S$
is equal to the atypicality of $L$. This conjecture was established
in~\cite{CHR},\cite{CK} and~\cite{GK}. For each $S\subset\Delta$ 
satisfying $(S|\lambda)=(S|S)=0$ we set
$$\KW(\lambda,S):=\sum_{w\in W}\sgn(w) w\bigl(\frac{e^{\lambda}}{\displaystyle\prod_{\beta\in S}(1+e^{-\beta})}\bigr).$$
(We call the above terms ``Kac-Wakimoto terms'' 
 since the condition $(S|S)=0$
is crucial for our argument).

Notice that Germoni's formulas
demonstrate that the KW-conditions are not necessary for~(\ref{KWformula})  to be valid:
the only atypical $\osp(3|2)$-module satisfying the KW-condition
is trivial whereas ~(\ref{BL}) holds for each $L\not\cong V_{st}$.

The first general formula for $\ch L$ was discovered by V.~Seganova~\cite{S-irr} in the $\fgl(m|n)$-case  by expressing the character as an infinite sum over characters of Kac modules. This algorithmic solution was enhanced by J.~Brundan \cite{Brundan-gl} who showed that the values of the coefficients  in the infinite sum can be computed in terms of weight diagrams. This description was then used Y.~Su and R.~Zhang \cite{SuZh1} to establish a finite character formula for $\mathfrak{gl}(m|n)$. For the $\osp$-case a finite character formula was produced by C.~Gruson and V.~Serganova in~\cite{GS}. For the exceptional Lie superalgebras
the character formulas were proven by J.~Germoni (\cite{Ger}) and L.~Martirosyan (\cite{Lilit}). For $\fq_n$ an implicit finite character formula was given by I.~Penkov and V.~Serganova in~\cite{PS2}; 
 Y.~Su and R.~B.~Zhang~\cite{SZq} wrote this formula explicitly 
using~\cite{B-q};
for the $\fp_n$-case an infinite Serganova-type character formula was recently proven by B.-H.~Hwang and J.-H.~Kwon~\cite{HK}.

The Kac-Wakimoto character formula suggests the following two refinements of 
van der Jeugt-Hughes-King-Thierry-Mieg's proposal: to present 
$Re^{\rho} \ch L$ as  a linear combinations of $\KW(\nu, S(\nu))$
where

I.  $S_{\nu}$ is maximal, i.e. $|S_{\nu}|$ equals to the atypicality of $L(\nu)$
or
  
II. $S_{\nu}$ can be embedded into a certain base $\Sigma_L$.

The Kac-Wakimoto is of both types.
Germoni's formula and the Su-Zhang formula for $\fgl(m|n)$ are of type I;
the Gruson-Serganova formula for $\osp$ and the Su-Zhang formula for $\fq_n$ are of type II. For the exceptional algebras the 
character formulas in~\cite{Ger},\cite{Lilit} can be rewritten
as type I or as type II formulas.

\subsection{The Gruson-Serganova type character formula}\label{overview}  
In this paper we obtain a formula of type II for $\fgl(m|n)$ (by above,
such formulas were obtained early for all other cases).
The main difference of the $\fgl(m|n)$-case is that 
$\Sigma_L$ depends on $L$.

Let  $\Irr$
be the set of isomorphism classes of the finite-dimensional irreducible
$\fg$-modules. 
 The terms $\{\ch L,\ L\in\Irr \}$ form  a natural
basis of the character ring. We say $\ch L$ is given by a \emph{Gruson-Serganova type character formula} if it can be written as
 a sum 

\begin{equation}\label{GSformula}
Re^{\rho}\ch L=\sum_{L'\in \Irr} b_{L,L'} \KW(L'),\end{equation}
where the  Kac-Wakimoto terms
$\KW(L)$ have the following properites:

\begin{enumerate}
\item
$\{\cE_{L}:=(Re^{\rho})^{-1}\KW(L),\ L\in\Irr \}$ form  a basis of the character ring
(where $R$ is the Weyl denominator). The terms $\cE_L$ are equal to the Euler characteristics $\mathcal{E}_{\lambda}$ (for a suitable choice of parabolic) of Penkov-Serganova and hence we can equally write the character of $L(\lambda)$ as a finite sum with integral coefficients in the Euler characters);

\item
the character formula $Re^{\rho}\ch L=\sum_{L'\in \Irr} b_{L,L'} \KW(L)$ is finite;

\item
the matrix $B:=(b_{L,L'})$ is a lower triangular matrix 
with integral entries and $1$s on the main diagonal; moreover, there exists a diagonal matrix
$D$ with $D^2=Id$ such that the entries of  $DBD^{-1}$ are non-negative (the entries of $DBD^{-1}$ can be interpreted
as a number of certain paths in a dirtected graph).

\item  $\KW(L):=j(L)^{-1}\KW(\lambda^{\dagger}, S_L)$, 
where $j(L)$ is a scalar, $\lambda^{\dagger}$ is the $\rho$-shifted highest weight of $L$ with respect to a certain base 
$\Sigma_L$ containing $S_L$.
\end{enumerate}

The scalar $j(L)$ is an order of the ``smallest factor'' in 
$\Stab_{W}\lambda^{\dagger}$ (for instance, $j(L)=|S|!$ for $\fgl$-case).
The set $S_L$ is  a maximal subset of $\Sigma_L$ satisfying 
$(\lambda^{\dagger}|S)=(S|S)=0$.

We call the cardinality of $S(L)$ the {\em tail } of $L$ ($\tail(L)$); this is a non-negative integer which is less than or equals to the atypicality of $L$.
If $b_{L,L'}\not=0$, then $L$ and $L'$ lie in the same block and
$\tail(L')\leq \tail(L)$. 
 We call the highest weight of $L$ a {\em Kostant weight}
if $\tail(L)$ is equal to the atypicality of $L$; in this case $b_{L,L'}=\delta_{L,L'}$.
From~\cite{CHR},\cite{CK} it follows that for $\fgl(m|n),\osp(2m+1|2n)$, 
$L(\lambda)$ satisfies  the  Kac-Wakimoto character formula if and only if 
its highest weight is a Kostant weight; this also holds for the $\osp(2m|2n)$-modules of atypicality greater than one, see~\ref{KostantKW}.

In the $\osp$-case~(\ref{GSformula}) was obtained in~\cite{GS} in a slightly different form (property (iv)  is established in~\Prop{propE} below).  
In this case $\Sigma_L=\Sigma$ is 
the usual ``mixed'' base  and $\lambda^{\dagger}=\lambda+\rho$.
In this paper  we will establish~(\ref{GSformula}) in the
$\fgl$-case. In this case 
$\Sigma_L$ depends on $L$; in Section~\ref{euler-gl} we describe the assignment
$L\mapsto \lambda^{\dagger}$; the image of this assigment 
can be naturally described in terms of weight diagrams.

For a finite-dimensional module $N$ define the $\xi$-character
$$\ch_{\xi} N:=\dim (N_{\ol{0}})_{\nu} e^{\nu}+\xi\dim (N_{\ol{1}})_{\nu} e^{\nu}$$ (where $\xi$ is a formal variable with $\xi^2=1$); clearly, the $\mathbb{Z}$-span of
 $\xi$-characters form a ring, which we denote by $\Ch_{\xi}(\fg)$.
The character
ring $\Ch(\fg)$  (resp.,
the supercharacter ring $\Sch(\fg)$) is a factor of $\Ch_{\xi}(\fg)$ by $\xi=1$ (resp., $\xi=-1$); these rings were explicitly described by A. N. Sergeev and A. P. Veselov (\cite{SV}).
Several important notions (for instance, $\dim N$ and $\sdim N$) can be viewed
as linear maps from  $\Ch_{\xi}(\fg)$. The Gruson-Serganova
formula give an expression of  $\Ch_{\xi} L$ see~(\ref{grGrS}) and
a formula for $\sch L$ in terms of $\cE_L^-$ (which are ``superanalogues''
of $\cE_L$).

An  important example is
the map  $\mult_{L'}: \Ch(\fg)\to\mathbb{Z}$ which
assigns to $\ch N$ the (non-graded) multiplicity $[N:L']$,
where $L'$ is a $\fg_0$-module; in~\Cor{cor123} we give formulas for
 $\mult_{L'}(\cE_L)$ and for $\dim (\cE_L)$. 

Another important linear map is induced  by the Duflo-Serganova monoidal functor $\DS_x: \Fin(\mathfrak{g}) \to \Fin(\mathfrak{g}_x)$ where $\mathfrak{g}_x$ is a smaller rank Lie superalgebra.
C. Hoyt and Sh. Reif \cite{HR} showed that $\DS_x$ induces a ring homomorphism
$$\ds_x:\Sch(\fg)\to \Sch(\fg_x)$$
given by $\ds_x: \sch N\mapsto\sch\DS_x(N)$; moreover,
 $\ds_x$ coincides with the evaluation of $\sch$ to a subalgebra $\fh_x\subset\fh$.
In~\Thm{thmcut} we show that $\ds_x(\cE^-_L)$ is given by a simple formula
(for the exceptional Lie superalgebras a similar formula
follows from~\cite{Gdex}).
Since $\DS_x$ preserves $\sdim$, this gives a formula for $\sdim \cE^-_L$, see~\Cor{corsdim}. It turns out that
$\sdim \mathcal{E}^-_L=0$ except for the case
when $\tail(L)$ equals to the defect of $\fg$
(by above,  $\tail(L)$ is less than or equals to the defect of $\fg$).

Using~(\ref{GSformula}) and the forementioned formulas one obtains
the expressions for $\sch \DS_x(L)$, $\sdim L$,  $[L:L']$ and $\dim L$ for $L\in\Irr$
(we do not write this long expressions).
 Note that  $\DS_x(L)$  is described  in~\cite{HW}, \cite{GH} and~\cite{Gdex}; various
formulas for $\dim L$ and $\sdim L$ appeared in~\cite{SuZh1},\cite{Lilit},\cite{HW},and~\cite{GH}.

\subsection{Euler characters}
The Euler characters  were originally defined as Euler characteristics of the cohomology of vector bundles on a super flag variety \cite{S-irr}. They were first introduced in \cite{P} \cite{PS1} \cite{PS2} and play also a crucial role in Brundan's work on characters in the $\mathfrak{q}(n)$-case \cite{B-q}\cite{B-Q-II}. We will describe 
the Euler characters for the ``core-free case'', i.e. for the principal block
of $\fgl(d|d)$ or $\osp(2d+t|2d)$, where $t=0,1,2$. (A similar description
works for all $\osp$-weights and for the ``stable weights'' in $\fgl$-case.)

Fix a flag of parabolic subalgebras
$$\fg=\fp^{(d)}\supset \fp^{(d-1)}\supset\ldots\supset \fp^{(0)}=\fb,$$
 where $d$ is the defect of $\fg$ and $\fl^{(i)}:=[\fp^{(i)},\fp^{i}]$ is of defect
$i$ (one has $\fl^{(i)}=\fgl(i|i)$ for $\fg=\fgl(d|d)$, $\fl^{(i)}=\osp(2i+t|2i)$,
for $\fg=\osp(2d+t|2d)$). 
For a pair of parabolic subalgebras $\mathfrak{q} \subset \mathfrak{p} \subset \mathfrak{g}$ containing a fixed Borel $\mathfrak{b}$ let $\Gamma_{\mathfrak{p},\mathfrak{q}}(V)$ denote the maximal finite dimensional quotient of the induced module $\cU(\fp)\otimes_{\cU(\fq)}V$. Then $\Gamma_{\fp,\fq}$ defines a functor from the category of finite-dimensional $\fq$-modules $\Fin(\fq)$ to $\Fin(\fp)$; and we denote by $\Gamma^i_{\fp,\fq}$ its derived functors as in \cite{GS}. For $\lambda,\mu\in\Lambda^+_{m|n}$ we consider the Poincar\'e polynomial
in the variable $z$
$$K^{\lambda,\mu}_{\fp,\fq}(z):=\sum_{i=0}^{\infty} [\Gamma_{\fp,\fq}^i
(L_{\fq}(\lambda)):L_{\fp}(\mu)]z^i,$$
where $L_{\fq}(\lambda)$ (resp. $L_{\fp}(\mu)$) stands for the corresponding simple $\fq$ (resp. $\fp$) module. Prop. 1 in~\cite{GS} expresses the Euler characteristic
\begin{equation}\label{EKla}
\cE_{\lambda,\fp}:=\sum_{\mu\in\Lambda^+_{m|n}}
K^{\lambda,\mu}_{\fg,\fp}(-1)\ch L(\mu)\end{equation}
in terms of $\ch L_{\fp}(\lambda)$.  The polynomials 
$K^{\lambda,\nu}_{\fp,\fq}(z)$ for the ``neighbouring parabolics'' were
computed in~\cite{Sselecta} in the $\fgl$-case and in~\cite{GS}
in the $\osp$-case. These polynomials
can be conveniently described
in terms of so-called ``arc diagrams'', see Section \ref{}.
The coefficients $K^{\lambda,\mu}_{\fg,\fp}(-1)$ can be computed iteratively
via the formula
\begin{equation}\label{iterative} 
K^{\lambda,\mu}_{\fg,\fq}(-1)=\sum_{\nu} K^{\lambda,\nu}_{\fp,\fq}(-1)
K^{\nu,\mu}_{\fg,\fp}(-1)\end{equation}
established in~\cite{GS}, Thm. 1.
 
  In the $\fgl$-case the matrix
$A_{\fb}:=(K^{\lambda,\mu}_{\fg,\fb}(-1))$ is invertible and the inverse
matrix can be explicitely described; this
gives the Serganova character formula~\cite{Sselecta}; this is an ``infinite formula'':
some  rows of $A^{-1}_{\fb}$ have infinitely many non-zero entries. For the
$\osp$-case one has $\cE_{\lambda,\fb}=0$ for some $\lambda$'s, so
the matrix $A_{\fb}$ is not invertible.
In order to obtain the  Gruson-Serganova character formula we take
for each $\lambda$ the ``maximal suitable parabolic'', setting
$\fp_{\lambda}:=\fp^{\tail(\lambda)}$, where
$\tail(\lambda)$ the maximal $i$ such that $\lambda|_{\fh\cap \fl^{(i)}}=0$.
(One has $\tail(\lambda)=\tail(L(\lambda))$ for $\osp$-weights and for the ``stable'' $\fgl$-weights).  The iterative formula~(\ref{iterative}) allows to interpret
 $K^{\lambda,\mu}_{\fg,\fp}(-1)$ in terms of ``decreasing paths'' in a certain directed  graph.  This graph has
several nice properties, which allow to express the inverse matrix
$(K^{\lambda,\mu}_{\fg,\fp^{\lambda}}(-1))^{-1}$ in terms of the paths in this graph; using~(\ref{EKla}) we obtain
\begin{equation}\label{for3}
\ch L(\lambda)=\sum_{\mu\in\Lambda^+_{m|n}} (-1)^{||\lambda||-||\mu||} 
d^{\lambda,\mu}_<\mathcal{E}_{\mu,\fp_{\mu}}\end{equation}
 for stable weights $\lambda$ where
$d^{\lambda,\mu}_<$ is the number of ``increasing paths'' from $\mu$ to $\lambda$ in the directed graph and $||\lambda||$ is defined in~\ref{normlambda}. 
Since $\dim L_{\fp_{\mu}}(\mu)=1$,  Prop. 1 in~\cite{GS} 
gives an explicit formula for $\mathcal{E}_{\mu,\fp_{\mu}}$; using the denominator
identity from~\cite{KW}  this formula can be rewritten
as
$Re^{\rho}\mathcal{E}_{\mu,\fp_{\mu}}=\KW(L)$. This gives (\ref{GSformula}).

For the $\osp$-case this program was executed in~\cite{GS}. In the first part
of our paper we execute a similar program for $\fgl$.
The graph in the $\fgl$-case has an easier description than in the $\osp$-case, but its structure is more complicated: by contrast with the $\osp$-case
each component has infinitely many sources. 
We show that in the  $\fgl$-case  each vertex has finitely many predecessors and this property allows
us to obtain~(\ref{for3}). We will
reveal some additional details in~\ref{method} below.

To link~(\ref{for3}) and~(\ref{GSformula})
 we show in Proposition \ref{propE} that the Euler characters are proportional to Kac-Wakimoto terms. This result is also fundamental when we study the effect of $ds$ on Euler characters in the second part of the paper.

A similar approach works
for the exceptional Lie superalgebras and for $\fq_n$; in these cases
 each component of the graph has a unique source. To the best of our knowledge,
the Gruson-Serganova type character formula is not known for the $\fp_n$-case;
we expect that in this case 
each component has infinitely many sources as for the $\fgl(m|n)$-case.

\subsection{Method of Proof}\label{method}
The proof of~(\ref{for3}) uses iterated parabolic induction.  We will  outline this proof  
 for the principal block
of $\fgl(d|d)$.  A similar proof
works for all $\osp$-weights and for the ``stable weights'' in $\fgl$-case.
On the other hand, the character formula for a simple module of atypicality $d$
can be reduced to this case.

The graphs $\hat{\Gamma}^{\chi}$ and $\Gamma^{\chi}$
are directed graphs with 
 the same set of vertices enumerated by the highest weights of
the irreducible modules in the principal block. In $\hat{\Gamma}^{\chi}$
the vertices $\mu,\lambda$
are joined by the
edge $\mu\overset{e}{\longrightarrow }\lambda$
if $K^{\lambda,\mu}_{\fp^{(s)},\fp^{(s-1)}}\not=\delta_{\lambda,\mu}$
for
such edge we set $b(e):=s$.
For $\Gamma^{\chi}$ we require that
$\fp^{s-1}\supset \fp_{\lambda}$ (in other words, $\Gamma^{\chi}$ can be obtained
from $\hat{\Gamma}^{\chi}$
by deleting the edges with $b(e)\leq \tail(\lambda)$).
By~\cite{MS}, for
the $\fgl(m|n)$-case 
$K^{\lambda,\mu}_{\fp^{(s)},\fp^{(s-1)}}$ is either $\delta_{\lambda,\nu}$ 
or $z^i$
with $i\equiv ||\lambda||-||\mu||$. In particular, if
$\mu$ and $\lambda$ are connected by an edge in $\hat{\Gamma}^{\chi}$, then 
$K^{\lambda,\mu}_{\fp^{(s)},\fp^{(s-1)}}=(-1)^{||\lambda||-||\mu||}$.

We define a ``decreasing path'' in $\hat{\Gamma}^{\chi}$, $\Gamma^{\chi}$  as a path with a decreasing 
function $b$. The formula~(\ref{iterative}) allows to express
$K^{\lambda,\mu}_{\fg,\fb}(-1)$  can be written as a sum of
$(-1)^{length(P)+||\lambda||-||\mu||}$, where $P$ runs through the decreasing paths from
$\mu$ to $\lambda$ 
in $\hat{\Gamma}^{\chi}$.
 We show that 
$K^{\lambda,\mu}_{\fg,\fp_{\lambda}}(-1)$
has the similar formula in terms of  the decreasing paths
in ${\Gamma}^{\chi}$ (the proof uses the fact that a path 
in ${\Gamma}^{\chi}$ lies in $\Gamma^{\chi}$ if the last edge
of this path lies in $\Gamma^{\chi}$).

The matrix 
$A_{>}:=(K^{\lambda,\mu}_{\fg,\fp_{\lambda}}(-1))$ is invertible;
by above, its entries can be written in terms of the decreasing paths from
$\mu$ to $\lambda$ in ${\Gamma}^{\chi}$, where the decreasing path
is defined in terms of the function $b$. We change $b$ to another function
$b'$ in such a way that
the function $b(e)$ is substituted by another function $b'(e)$
such that a path is decreasing for $b$ if and only if it is decreasing for $b'$. 
 The function $b'$ has the following advantage:

(*) if $\nu$ is the start of an edge $e_1$ and the end of an edge
$e_2$, then $b'(e_1)\not=b'(e_2)$ 

(this propery does not hold for the function $b$).
By above, $K^{\lambda,\mu}_{\fg,\fp_{\lambda}}(-1)$
can be written as a sum of
$(-1)^{length(P)+||\lambda||-||\mu||}$, where $P$ runs through the descreasing paths
from $\mu$ to $\lambda$
in ${\Gamma}^{\chi}$. The property (*) implies that the entries of $A_>^{-1}$  
can be written as a sum of
$(-1)^{||\lambda||-||\mu||}$, where $P$ runs through the {\em increasing} paths
with respect to $b'$; this gives the formula~(\ref{for3}) ($d^{\lambda,\mu}_{<}$ stands for the number of increasing paths
from $\mu$ to $\lambda$ with respect to $b'$). The graph $\hat{\Gamma}^{\chi}$ and the
functions $b,b',\deg(e)$ can be naturally described in terms of arc diagrams, (see~\ref{identific-graph}).

In order to prove the finiteness of the formula~(\ref{for3}) we 
show that each vertex $\lambda$ has a finite set of predecessors in 
the $\Gamma^{\chi}$ (for $\fgl$-case this property does not hold for
$\hat{\Gamma}^{\chi}$;
for $\osp$ and $\fq$-cases the property hold in both cases, since 
$\{\mu \in\Lambda_{m|n}| \mu\leq\lambda\}$ is finite).

\subsection{Modified superdimensions} The Kac-Wakimoto conjecture states that
  $\sdim L(\lambda)\not=0$ if and only if $L(\lambda)$
has a maximal atypicality; this conjecture was
 proven by V. Serganova  in \cite{Skw}. In the $\fgl(m|n)$-case
 $\sdim L(\lambda)$
was computed in \cite{HW}.

Consider the case $\fg=\fgl(m|n), \osp(M|N)$.
 Fix a triangular decomposition in the {\it usual way} (see \cite{GS},\cite{GSBGG}, etc.), i.e. a distinguished base for $\fgl(m|n)$ and the mixed base for $\osp(m|2n)$. 
Let $L(\lambda)$ be a finite-dimensional
 simple module of atypicality $k$. In this case applying $\DS_x$ with $rk(x) = k$ to $L(\lambda)$ gives by 
Theorem \ref{thmcut} an isotypic representation $ L(\lambda')^{\oplus m(\lambda)}$ of $\mathfrak{g}_x = \mathfrak{gl}(m-k|n-k)$ in the $\mathfrak{gl}$-case and in the $\mathfrak{osp}$-case either an isotypic representation $L(\lambda')^{\oplus m(\lambda)}$ 
of $\mathfrak{g}_x = \mathfrak{osp}(m-2k|2n-2k)$ (if $L(\lambda')$ is $\sigma$ invariant for the involution $\sigma$ of $OSp$, see Section \ref{sigma}) or $ ( L(\lambda') \oplus L(\lambda')^{\sigma})^{\oplus m(\lambda)}$ else. We put $L^{core} = L(\lambda')$ in the $\mathfrak{gl}$ and $\mathfrak{osp}(2m+1|2n)$-case and \[ L^{core} := \begin{cases} L(\lambda') & \text{ if } \lambda' \text{ is } \sigma- \text{invariant} \\ L(\lambda') \oplus L(\lambda')^{\sigma} & \text{ else } \end{cases} \] in the $\mathfrak{osp}(2m|2n)$-case. Then $L^{\core}$ only depends on the central character of $\lambda$. Using this notation we obtain in case where the atypicality of $L(\lambda)$ equals the rank of $x$ the uniform formula 
$$\DS_x(L(\lambda)) \cong \Pi^{i} 
(L^{core})^{\oplus m(\lambda)}$$ for some parity shift $\Pi^{i}$.

Identifying $\fg_x$ with a subalgebra
of $\fg$ as in~\cite{DS}, we can interpret the above formula as follows:
for a simple $\fg_x$-module $L'$ the  ``super multiplicity'' of $L'$ in
 $L(\lambda)$ is zero if $[L^{core}:L']=0$ and is $\pm m(\lambda)$
otherwise, see~\ref{supermult}.

If $L(\lambda)$ is maximal atypical, $\fg_x$ is one of the algebras $\fgl_k$, $\mathfrak{o}_k$, $\mathfrak{sp}_k,\mathfrak{osp}(1|2k)$.

%

The numbers $m(\lambda)$ can be computed in the equal rank case. 
In this case   $m(\lambda)=|\sdim L(\lambda)|$
is equal to the number of
increasing paths from the Kostant weights, which are
 $\mu$'s with $\dim L(\mu)=1$, to $\lambda$.
(In all cases it is easy to see
that the existence of such paths is equivalent to
the maximal atypicality condition). Therefore we reprove the Kac-Wakimoto conjecture and establish another combinatorial expressions for the superdimensions.


If $L(\lambda)$ is not maximal atypical, one can introduce a modified superdimension $\sdim^k$ on the thick tensor ideal spanned by the irreducible representations of atypicality $k$ instead. We show that the modified superdimension is given by the formula $\sdim^k(L(\lambda)) = \pm m(\lambda) \sdim^0(L^{core}) $ for $L(\lambda)$ of atypicality $k$ where $sdim^0$ is the (unique up to a scalar) modified superdimension on the thick ideal of projective objects, reproving results of \cite{Skw} and \cite{Kujawa-generalized}. 

 By~\cite{GH}  the isotypic multiplicity in $\osp$-case can be expressed  in terms of the arc diagram of $\lambda$.

\subsection{Structure of the article}
We recall some backgrounds in Sections \ref{sec:prel} and \ref{sec:weights}. In particular, in Section~\ref{sec:weights} we discuss
stability, tail and weight diagrams.
We define parabolic induction functors and their derived versions in Section \ref{sec:para-ind}. The main results - formula~(\ref{GSformula}) for $\fgl(m|n)$ and the behaviour of $\mathcal{E}^-_{\mu}$'s under $ds$ - are proven in Section \ref{sec:character} and Theorem \ref{thmcut}. For the  relationship of~(\ref{GSformula})  to other existing $\fgl(m|n)$-character formulas (notably the one from Su-Zhang) see \ref{other-formulas}. Section \ref{euler-gl} deals with 
$\cE_{\mu,\fp_{\mu}}$ and $\KW(L)$
 in the $\mathfrak{gl}(m|n)$-case; in particular, we describe the assignment
$L\mapsto \lambda^{\dagger}$ and establish property (iv).
The results on superdimensions and modified superdimensions are assembled in 
Section~\ref{sec:sdim}.
 We discuss properties of $KW(L)$ in Appendix~\ref{sec:RR}.  
In~\ref{KWR} we compute $\dim\cE_{\lambda}$
and obtain a formula for the multiplicity of a $\fg_0$-module in
$L(\lambda)$, see~\Cor{cor123}, \ref{Lasg0} and~\ref{xiversion} for the graded versions.


{\bf  Acknowledgments.} The  authors are grateful to  V.~Hinich,
 V.~Serganova  for numerous 
helpful suggestions and to Sh.~Reif, A.~Sherman and C.~Stroppel for
stimulating discussions. 

The research of Thorsten Heidersdorf was partially funded by the Deutsche Forschungsgemeinschaft (DFG, German Research
Foundation) under Germany's Excellence Strategy – EXC-2047/1 – 390685813.

\subsection{Index of definitions and notation} \label{sec:app-index}
Throughout the paper the ground field is $\mathbb{C}$; 
$\mathbb{N}$ stands 
 for the set of non-negative integers. We will use the standard
Kac notation for the root systems. 

\begin{center}
\begin{longtable}{llr}

$\KW(\lambda, S), \Ch(\fg), \Sch(\fg)$    & &\ref{overview}\\

$\cF,\ \Lambda^+_{m|n}$ & & \ref{tildeF}\\

$\sigma$ & & \ref{sigma} \\

$\fg_x,\ rk(x), \ds_x,\ds_j$ & & \ref{DSdefi}\\

$S_s,\ \Sigma\ , \rho$ & & \ref{tri}\\

iso-set, $\at(\lambda), \at(L)$,  stable weight, $\tail(\lambda), \fg_{\lambda}$  & & \ref{atypdef}\\

$\Lambda^{\geq}_{m|n}$, weight diagrams,$\diag(\lambda)$, stability and tail for the diagrams & & \ref{wtdiag}\\

$\core(\lambda),\chi_{\lambda}, \Lambda^{\chi}, \core(\chi),$, core-free, $\howl(\lambda)$ & & \ref{corewt}\\

$||\lambda||_{gr},||\lambda||,\tau$, Kostant weights  & & \ref{normlambda}\\

$K^{\lambda,\mu}_{\fp,\fq}(z)$ & & \ref{sec:pi}\\

$\cE_{\lambda},\ \fp_{\lambda}$ & & \ref{termsE}\\

$\rho_L$ & & \ref{propE}\\

arc diagrams for $\fgl$, $b(\nu;\lambda)$, $b'(\nu;\lambda)$ & & \ref{Poincaregl}\\

graph $D_{\fg}$ for $\fgl$, $D_{\fg}^{\chi}$  & & \ref{graphDg}\\

$\Lambda^{\chi}_{st}, d'_{\lambda,\mu}, d^{\lambda,\mu}_<$, $T_{a,a+1}$  & &
\ref{Taa+1}\\

$j(\nu), \KW(\nu), \KW(L) $ & & \ref{anotheryear} \\

$\hwt_{\Sigma'} L$ & & \ref{hwt}\\

$\hat{\Gamma}^{\chi}_{st}, \Gamma^{\chi}_{st}, \hat{\Gamma}^{\chi}, 
\Gamma^{\chi} $, increasing/decreasing paths & &
\ref{graphs}\\

$\lambda^{\dagger}$, $\tail(\nu), \KW(\nu)$ for non-stable weights & & \ref{tail-up}\\

$\cE^-_{\nu}$ & & \ref{defpi}\\

$L^{\core}$ & & \ref{core}\\

$\sgn(w), \jJ_Y,\ \ R_0,\ R_1,\ \rho_0,\ \rho_1,\ Re^{\rho}$ & & \ref{JW}\\

$\cR, \cR_{\Sigma}, \ \supp$ & & \ref{cR}\\

$\cP_{\chi}, \Theta^{V}_{\chi,\chi'} $ & & \ref{trans}\\

\end{longtable}
\end{center}


\section{Preliminaries} \label{sec:prel}

We denote by $\Pi$ the parity change functor. Throughout
the Sections~\ref{sec:prel}--\ref{sec:sdim} $\fg$ stands for one of the Lie superalegbras $\fgl(m|n),\osp(2m|2n)$ or $\osp(2m+1|2n)$.

\subsection{Notation}\label{tildeF}
We use the standard notation: 
the root system $\Delta$ lies in the lattice $\Lambda_{m|n}\subset \fh^*$ spanned by $\{\vareps_i\}_{i=1}^m\cup\{\delta_i\}_{i=1}^n$. We denote by $\Lambda$ the lattice
spanned by $\{\vareps_i\}_{i=1}^{\infty}\cup\{\delta_i\}_{i=1}^{\infty}$
and view $\Lambda_{m|n}$ as a subset of $\Lambda$. We define the parity homomorphism
$p:\Lambda\to\mathbb{Z}_2$ by
$p(\vareps_i)=\ol{0}$, $p(\delta_j)=\ol{1}$ for all $i,j$.

The category $\Fin$ of finite dimensional representations of $\mathfrak{g}$ with parity preserving morphisms is the direct sum of two categories: $\tilde{\cF}$
with the modules whose weights lie in $\Lambda_{m|n}$ and   $\tilde{\cF}^{\perp}$
with the modules whose weights lie in $\fh^*\setminus \Lambda_{m|n}$.
The category $\tilde{\cF}^{\perp}$
 is semisimple and $\DS_x(\tilde{\cF}^{\perp})=0$ for $x\not=0$;
 all simple modules in  $\tilde{\cF}^{\perp}$
 are typical,
their characters  are given by the Weyl-Kac character formula
(in our notations $\sch L(\lambda)=\mathcal{E}^-_{\lambda}$).

The category $\tilde{\cF}$  is canonically
isomorphic to the category of $G$-modules, where $G$ is a classical supergroup corresponding to $\fg$:

$G:=GL(m|n)$ for $\fgl(m|n)$ and $G:=SOSp(m|n)$ for $\fg=\osp(m|n)$.

We fix
 the same  triangular decomposition as in \cite{GS},\cite{GSBGG},\cite{MS}:
for $\fgl(m|n)$ we choose the base $\Sigma$ which contains only one odd
root $\vareps_m-\delta_1$ and in the $\osp$-case we choose a base $\Sigma$
which contains a maximal possible number of odd roots;
we always consider $\Sigma$ as
the ordered set with respect to the usual order (see examples in \ref{tri} below).

We denote by $\Lambda^+_{m|n}$ the set of dominant weights in $\Lambda_{m|n}$:
$$\Lambda^+_{m|n}:=
\{\lambda\in\Lambda_{m|n}|\ \dim L(\lambda)<\infty\}.$$
The simple modules in $\tilde{\cF}$ are of the form $L(\lambda),\Pi(L(\lambda))$ for
$\lambda\in\Lambda_{m|n}^+$.

The category $\tilde{\cF}$ decomposes into a direct sum two  categories \[ \tilde{\cF} = \cF \oplus \Pi \cF \] such that the simple objects in $\cF$ are labelled by the dominant integral weights. Note that $\tilde{\cF}$ and $\cF$
are tensor categories.

\subsection{$OSp, SOSp$ and $\sigma$}
\label{sigma}

One has $O(2r+1)=SO(2r+1)\times\mathbb{Z}_2$ and
$O(2r)=SO(2r)\rtimes\mathbb{Z}_2$; we can choose the subgroup
$\mathbb{Z}_2$ in such a way that $\mathbb{Z}_2$
acts on $\osp(2r|2n)$ by an involutive automorphism
$\sigma$ which stabilizes the Cartan algebra $\fh$. For $r>1$
(i.e., $\fo_r\not=\mathbb{C}$),
$\sigma$ induces a Dynkin diagram
involution   given by
$$\sigma(\delta_j)=\delta_j,\ \ j=1,\ldots,n;\ \
\sigma(\vareps_i)=\vareps_i\ \text{ for } i=1,\ldots,r-1;\ \
\sigma(\vareps_r)=-\vareps_r.$$

For odd $m = 2r+1$ the orthosymplectic supergroup $OSp(m|2n)$ is a direct product \[ OSp(2r +1|2n) \cong SOSp(2r+1|2n) \times \mathbb{Z}_2\] where the non trivial element of $\mathbb{Z}_2$ acts as minus the identity. For even $m=2r$ it is a semidirect product \[ OSp(2r|2n) \cong SOSp(2r|2n) \rtimes \mathbb{Z}_2. \] The underlying even group of $OSp(m|2n)$ is $O(m) \times Sp(2n)$ and $SO(m) \times Sp(2n)$ in the $SOSp$-case.

The automorphism $\sigma$ can be extended to $\fosp(2r|2n)$. For $m > 1$ the involution $\sigma$ on $\mathfrak{h}^*$ is given by \begin{align*} \sigma(\delta_j) & = \delta_j \ & j=1,\ldots,n \\ \sigma(\vareps_i) & = \vareps_i & i =1,\ldots,r-1 \\ \sigma(\vareps_r) & = - \vareps_r & .
\end{align*}

A finite-dimensional $SO(2r)$-module $N$ can be extended to $O(2r)$
if and only if $N^{\sigma}\cong N$. Similarly, a
finite-dimensional
$SOSp(2r|2n)$-module $N$ can be extended to $OSp(2r|2n)$
if and only if $N^{\sigma}\cong N$. See also \cite{ES} for more details.

\subsection{The $\DS$-functor}\label{DSdefi}
The $\DS$-functor was introduced in \cite{DS}. We recall the definition
 below.

For a $\fg$-module $M$ and $g\in\fg$ we set
$$M^g:=\Ker_M g.$$

We fix now an $x \in \fg_1$ with $[x,x]=0$.
We set $\fg_x:=\fg^{\ad x}/[x,\fg]$; note that $\fg^{\ad x}$ and $\fg_x$ are  Lie superalgebras.
For a $\fg$-module $M$ we set
$$\DS_x(M)=M^x/xM.$$
Observe that $M^x, xM$ are $\fg^{\ad x}$-invariant and $[x,\fg] M^x\subset xM$,
so $\DS_x(M)$ is a $\fg^{\ad x}$-module and $\fg_x$-module.
Thus $\DS_x: M\to \DS_x(M)$ is a symmetric monoidal functor from the category of $\fg$-modules to
the category of $\fg_x$-modules.

\subsubsection{Remark}\label{supermult}
Notice that the action of $x$ provides a $\fg^{\ad x}$-isomorphism $M/M^x\iso \Pi(xM)$.
This implies that the ``super multiplicity'' of a simple $\fg^{\ad x}$-module $L'$
in a $\fg$-module $M$ equals to the ``super multiplicity'' of $L'$
in the $\fg^x$-module $\DS_x(M)$:
$$[M:L']-[M:\Pi(L')]=[\DS_x(M):L']-[\DS_x(M):\Pi(L')].$$

In many examples $\fg_x$ can be idenitified with a subalgebra of $\fg$;
in this case the same holds for a simple $\fg_x$-module $L'$.
The examples of such situation includes the cases when $\fg$
is a finite-dimensional Kac-Moody algebra (and $x$ is arbitrary), see~\cite{DS}.

\subsubsection{}
Let $G_0$ denote $GL(m) \times GL(n)$ (the $\fgl$-case) or $O(m) \times Sp(2n)$ (the $\fosp$-case). Then there exists $g\in G_0$
and isotropic mutually orthogonal  linearly independent roots
$\alpha_1,\ldots,\alpha_j$ such that
$$Ad_g(x)=x_1+\ldots+x_j,\ \ \text{ where }x_i\in\fg_{\alpha_i}.$$
The number $j$ does not depend on the choice of $g$ and
is denoted by $\rank x$ (or $rk(x)$) \cite{DS}. The Lie superalgebra $\fg_x$ depends only on the rank of $x$. For $rk(x) = k$ we have \begin{align*} \fg_x \cong \begin{cases} \mathfrak{gl}(m-k|n-k) & \mathfrak{g} = \mathfrak{gl}(m|n) \\ \mathfrak{osp}(m-2k|2n-2k) & \mathfrak{g} = \mathfrak{osp}(m|2n). \end{cases} \end{align*}

Take $x:=x_1+\ldots+x_j$ as above. Then 
the algebra
$\fh_x:=\fh^{\ad x}/([x,\fg]\cap\fh)$ is a Cartan subalgebra of $\fg_x$.
The functor $\DS_x$ induces a  ring homomorphism
$\ds_x:\Sch(\fg)\to \Sch(\fg_x)$ such that
$$\sch \DS_x(N)=\ds_x(\sch N)$$
for each $N\in\Fin(\fg)$. This homomorphism can be described as follows:
the restriction $f\mapsto f|_{\fh^x}$ gives a ring homomorphism
$\Sch(\fg)\to \Sch(\fg^{\ad x})$; the image of this map
lies in $\Sch(\fg_x)$ (which is a subring in $\Sch(\fg^{\ad x})$)
and  $\ds_x:\Sch(\fg)\to \Sch(\fg_x)$ is the corresponding map. 
If we choose $\fh_x\subset \fh^x$ such that $\fh^{\ad x}=\fh_x\oplus ([x,\fg]\cap\fh)$, then $\ds_x$ is given by $f\mapsto f|_{\fh_x}$, see~\cite{HR}, Lemma 4.
 
\subsubsection{}

In this paper we will describe the action of $\ds_x$ on a certain
basis of $\Sch(\fg)$. We do not use $\DS$, but $\ds$
only; and while $DS_x$ depends on $x$ (even for fixed rank \cite{HW}), $\ds_x$ depends only on the rank of $x$ and we simply write $\ds_k$ for $ds_x$ with $rk(x) = k$. Then
\begin{equation}\label{DSxj}
\ds_j=(\ds_1)^j.
\end{equation}

\section{Weights, roots and diagrams} \label{sec:weights}
We use the standard notation for the roots of $\fg_0$ and denote by
$\Pi_0$  a standard set of simple roots. In what follows 
we consider only bases $\Sigma$ of $\Delta$ which are compatible with $\Pi_0$, 
that is $\Delta^+(\Sigma)_0=\Delta^+(\Pi_0)$. By~\cite{Sgrs}, all such bases are connected by chains of odd reflections. These bases can be encoded by
words consisting of $m$ letters $\vareps$ and $n$ letters
$\delta$ (see examples below). 

We fix a standard bilinear form on $\fh^*$: $(\vareps_i|\vareps_j)=\delta_{ij}=-
(\delta_i|\delta_j)$, $(\vareps_i|\delta_j)=0$.

\subsection{The base $\Sigma$ and the sets $S_s$}\label{tri}
We set $S_0:=\emptyset$ and introduce the sets $S_s$ for 
$s=1,\ldots,\min(m,n)$ as follows. 
$$S_s:=\left\{\begin{array}{ll}
\{\varesp_{m+1-i}-\delta_i\}_{i=1}^s &\ \text{for $\fgl(m|n)$}\\
\{\delta_{n-i}-\vareps_{m-i}\}_{i=0}^{s-1} & \text{ for $\osp(2m|2n)$}\\
\{\vareps_{m-i}-\delta_{n-i}\}_{i=0}^{s-1} & \text{ for $\osp(2m+1|2n)$.}\\
\end{array}\right.$$
Notice that $S_{\min(m,n)}$ is a basis of a maximal isotropic subspace of $\fh^*$.

For $\fgl(m|n)$ we take the base $\Sigma$ corresponding to the word
$\vareps^m\delta^n$:
$$\Sigma:=\{\vareps_1-\vareps_2,\ldots,\vareps_{m-1}-\vareps_m,
\vareps_m-\delta_1,\delta_1-\delta_2,\ldots,\delta_{n-1}-\delta_n\}.$$
For $\osp$-case
we denote by $\Sigma$  a base containing  $S_{\min(m,n)}$ (such a base is unique):
this is the base $\delta^{n-m}(\delta\vareps)^m$ (resp.,  $\vareps^{m-n}(\delta\vareps)^n$ )
for $\osp(2m|2n)$ with $n\geq m$ (resp., $n\leq m$) and
$\delta^{n-m}(\vareps\delta)^m$ (resp.,  $\vareps^{m-n}(\vareps\delta)^n$ )
for $\osp(2m+1|2n)$ with $n\geq m$ (resp., $n\leq m$). For instance,
$$\Sigma=\{\delta_1-\delta_2,\ldots,\delta_{n-m+1}-\vareps_1,\vareps_1-\delta_{n-m+2},\delta_{n-m+2}-\vareps_2,\ldots,\vareps_{m-1}-\delta_{n},
\delta_n\pm\vareps_m
\}$$
for $\osp(2m|2n)$ with $n\geq m$.

\subsubsection{Remark} In~\cite{GH} we used the same bases $\Sigma$, but in the $\osp(2m|2n)$-case we chose different  $S$ for  different blocks.
The formulas in~\Prop{propE} hold for both choices of $S$.

\subsubsection{}
We denote by $\rho$ the Weyl vector of $\fg$ (see~\ref{Weylvector}).
Note that $\rho$ is unique for $\osp(M|2n)$ with $M\not=2$;
for $\osp(2|2n)$ we take $\rho=\sum_{i=1}^n (n-i)\delta_i$ and 
for $\fgl(m|n)$ we take $\rho = \sum_{i=1}^m (m+1-i)\vareps_i
-\sum_{i=1}^n i\delta_i$.  Note that
 $(\rho| S_{\min(m,n)})=0$; in the $\osp(2m|2n)$-case one has $\sigma(\rho)=\rho$.

\subsection{Atypicality, stability and tails}\label{atypdef}
We call $S\subset \Delta_1$ an {\em iso-set} if $S$ forms a basis
of an isotropic subspace in $\fh^*$, i.e.
$S$ is linearly independent and $(S|S)=0$. For instance, $S_r$ is  an iso-set.

For $\lambda\in {\fh}^*$ we denote by
$\at(\lambda)$ the atypicality of $\lambda$ (i.e. the
maximal cardinality of an iso-set orthogonal to $\lambda$).
The atypicality of $L(\lambda)$ is equal to $\at(\lambda+\rho)$.

The stability is usually introduced for a weight diagram.
Below we will introduce this notion for a weight (and a fixed base $\Sigma$).

We say that $\fg_s\subset\fg$ is an {\em  equal rank  subalgebra}
if $\fg_s$ is of the following form: 
$\fg_s=\fgl(s|s)$ for $\fgl$-case,
$\fg_s=\osp(2s+1|2s)$ for $\fg=\osp(2m+1|2n)$, 
$\fg_s=\osp(2s|2s)$ or $\osp(2s+2|2s)$ for $\fg=\osp(2m|2n)$,
and, in addition, $\fg_s$ has a base $\Sigma_s\subset\Sigma$.
Note that $\rho'_s:=\rho|_{\fg_s\cap \fh}$ satisfies $(\rho'_s|\alpha)=2(\alpha|\alpha)$ for each $\alpha\in\Sigma_s$, so $\rho'_s$ is ``a Weyl vector'' for
$\fg_s$ ($\rho'_s$ is the usual Weyl vector except for $\fgl$-case).
Observe that $\fg$ contains a unique copy
of $\fg_s$ for each $s$ with $0<s\leq \min(m,n)$.

\subsubsection{}\begin{defn}{}
In the $\fgl$-case we say that $\lambda\in\Lambda^+_{m|n}$
is a {\em stable} weight if there exists an equal rank  subalgebra
 $\fg_s\subset \fg$ such that for
 $$\at(\lambda+\rho)|_{\fh\cap\fg_s}=\at(\lambda+\rho)=s\  (=defect\ \fg_s).$$
\end{defn}

\subsubsection{}\begin{defn}{plambda}
Take $\lambda\in\Lambda^+_{m|n}$ which is assumed to be stable
for $\fgl(m|n)$-case. We denote by
  $\fg_{\lambda}$  the maximal equal  rank   subalgebra
of $\fg$ satisfying $\lambda|_{\fh\cap [\fg_{\lambda},\fg_{\lambda}]}=0$;
we call $\fg_{\lambda}$ the {\em tail subalgebra} of $\lambda$
and denote by $\tail(\lambda)$ the   defect of 
 $\fg_{\lambda}$. 
\end{defn}

\subsubsection{Examples}
The $\fgl(3|3)$-weight $\lambda$ with $\lambda+\rho=3\vareps_1+2\vareps_2-2\delta_2-5\delta_3$
is stable (with $\fg_s=\fgl(2|2)$), one has $\fg_{\lambda}=\fgl(1|1)$ and
$\tail(\lambda)=1$; the $\fgl(3|3)$-weight $\nu$ with $\nu+\rho=3\vareps_1+\vareps_2-\delta_2-5\delta_3$
is stable (with $\fg_s=\fgl(2|2)$), one has $\fg_{\nu}=\fgl(2|2)$ and
$\tail(\nu)=2$.

\subsection{Weight diagrams}\label{wtdiag}
Many properties of a finite dimensional representation $L(\lambda)$ can be better understood by assigning a {\em weight diagram} to the weight $\lambda$ (see e.g. \cite{Skw} \cite{HW} \cite{EAS}). These were first defined in \cite{BS} for $\mathfrak{gl}(m|n)$ and then for $\mathfrak{osp}(m|2n)$ in \cite{GS} \cite{Skw} and for $OSp$ in \cite{ES}. Note that the conventions how to draw these weight diagrams differ. The original weight diagrams of \cite{BS} use a different labeling of the vertices: Our $>$ is a $\times$, our $<$ a $\circ$ and our $\times$ a $\vee$. For the difference between the weight diagrams of \cite{GS} and \cite{ES} in the $\mathfrak{osp}$-case see \cite[Proposition 6.1]{ES}. We follow essentially \cite{GS}.

We denote by $\Lambda^{\geq}_{m|n}$ the following subgroup of $\fh^*$:
$$\Lambda^{\geq}_{m|n}:=
\{\displaystyle\sum_{i=1}^m a_i\vareps_i+\sum_{j=1}^n b_i\delta_j|\ a_1\in\frac{1}{2}\mathbb{Z},\
a_1-b_1\in\mathbb{Z},\  a_i-a_j,b_i-b_j\in\mathbb{N},
 \text{  for }i<j\}.$$
The set $\Lambda^{\geq}_{m|n}$ contains $\rho,
\Lambda^+_{m|n}$ and $\Lambda^+_{m|n}+\rho$.

\subsubsection{}
We assign the  weight diagram (a labeling of the real line $\mathbb{R}$ by certain symbols) 
to each weight $\displaystyle\sum_{i=1}^m a_i\vareps_i+\sum_{j=1}^n b_i\delta_j\in \Lambda^{\geq}_{m|n}$  using the following rules:

for $\fgl(m|n)$ with odd $m-n$ we put  $>$  (resp., $<$) at the position with the coordinate $j$ if $a_i=j$ (resp., $b_i=-j$)
for some $i$;

for $\fgl(m|n)$ with even $m-n$  we put  $>$  (resp., $<$) at the position with the coordinate $j+\frac{1}{2}$ if $a_i=j$ (resp., $b_i=-j$)
for some $i$;

for $\osp(2m|2n)$ we put  $>$  (resp., $<$) at the position with the coordinate $t$ if $|a_i|=j$ (resp., $|b_i|=j$)
for some $i$; if $a_m\not=0$  we add the sign $+$ (resp., $-$)
if $a_m>0$ (resp., $a_m<0$);

for $\osp(2m+1|2n)$ we put  $>$  (resp., $<$) at the position with the coordinate $j-1/2$ if $a_i=j$ (resp., $b_i=j$)
for some $i$; we add the sign $+$ (resp., $-$) if the zero position is occupied
by $\times^p$ for $p>0$ and $(\lambda+\rho|\vareps_i)=\frac{1}{2}$ for some $i$
(resp., $(\lambda+\rho|\vareps_i)\not=\frac{1}{2}$ for each $i$).

If $>,<$ occupy the same position we write these symbols as $\times$ ($\times^s$ stands for
$s$ symbols $<$ and $s$ symbols $>$; $\frac{>}{\times^s}$ stands for
$s$ symbols $<$ and $s+1$ symbols $>$). We put  an ``empty symbol'' $\circ$ at the  non-occupied
positions with the coordinates in $a_1+\mathbb{Z}$;  sometimes instead of $\circ$
we put its coordinate (for instance, $0\circ \times$ means that $\times$ has the coordinate $2$). For a diagram $f$ we denote by $f(a)$ the symbols
at the $a$-th position.

Note that for $\osp(2m+1|2n)$-case our  diagram
is obtained from  the diagram used in \cite{GS} by the shift
by $-1/2$.

\subsubsection{Examples}
The diagram of $\rho$ for $\osp(2n+1|2n)$
has the sign $-$ and contains $n$ symbols $\times$ in the zero position;
we write this as $(-) \times^n$; similarly, for $\osp(2n|2n)$
the diagram of $\rho$ is $\times^n$ and for $\fgl(3|3)$
the diagram of $\rho$ is $\times\times\times$, where the rightmost symbol
$\times$ appears in the  position $0$.
For $\fgl(m|n)$ we sometimes add a coordinate of $\circ$ instead one empty symbol; for instance, for $\fgl(4|3)$   the diagram of $\rho$  can be written as
$$0\times\times\times>\ \text{ or } -1\circ \times\times\times>.$$

\subsubsection{}
We assign to each $\lambda\in \Lambda^{\geq}_{m|n}$ the diagram of $\lambda+\rho$ constructed as above; this diagram will be denoted 
by $\diag(\lambda)$. This procedure gives a one-to-one correspondence
between $\Lambda^+_{m|n}$ and the diagrams containing $k$ symbols $\times$,
$m-k$ symbols $>$ and $n-k$ symbols $<$ (where $k\leq \min(m,n)$)
with the following additional properties:

\begin{itemize}
\item the atypicality of $L(\lambda)$ is equal to the number of symbols $\times$ in the
diagram of $\lambda+\rho$;

\item in the $\fgl$-case the coordinates of the occupied positions lie in $\mathbb{Z}$  and  each occupied position
contains exactly one of the signs $\{>,<,\times\}$;

\item in the $\osp$-case 
the coordinates of the occupied positions lie in $\mathbb{N}$ and
each non-zero occupied
position contains exactly one of the signs $\{>,<,\times\}$;

\item in the $\osp(2m|2n)$-case 
the zero position does not contain $<$, contains at most one symbol $>$ and
an arbitrary number of $\times$; a  diagram $f$ has a sign if and only if $f(0)=\circ$;

\item in the $\osp(2m+1|2n)$-case 
the zero position contains at most one of the symbols $>,<$ and
an arbitrary number of $\times$; 
a  diagram $f$ has a sign if and only if $f(0)=\times^i$ for $i>0$.
\end{itemize}

\subsubsection{Remark: $OSp(2m|2n)$-modules}
By~\ref{sigma} simple $OSp(2m|2n)$-modules are in one-to-one correspondence with the unsigned
$\osp(2m|2n)$-diagrams.

\subsubsection{Tail  in the diagramic language}\label{tails}
 It is easy to see that in the $\fgl$-case 
 $\lambda\in\Lambda^{\geq}_{m|n}$ is  stable
if and only if  all symbols $\times$ in $\diag(\lambda)$ preceed 
all symbols $<,>$. 

For $\lambda\in\Lambda^+_{m|n}$ we can easily express 
$\tail(\lambda)$ in terms of $f:=\diag(\lambda)$:

\begin{itemize}
\item
for $\osp(2m|2n)$-case $\tail(\lambda)$ equals to the number of symbols
$\times$ in the zero position of $f$;

\item for $\osp(2m+1|2n)$-case $\tail(\lambda)$ is the number of symbols
$\times$ in the zero position of $f$ if $f$ does not have $(+)$ sign
and is less by $1$ if $f$ has the sign $(+)$;

\item for a stable weight $\lambda$ in the $\fgl$-case $\tail(\lambda)$ equals to the maximal length of the subdiagram $\times\times\cdots\times$
which starts from the first symbol $\times$ in $\diag(\lambda)$. 
\end{itemize}

For instance, in the $\osp$-case 
$\tail(\circ\times\times)=0$ and $\tail((+)\times^3\times)=2$; in the $\fgl$-case one has
$\tail(\circ\times\times\circ\times\times\times)=2$.

Note that in the $\fgl$-case $\tail(\lambda)\not=0$ if 
$\lambda$ is an atypical stable weight.

\subsection{Cores and howls}\label{corewt}
We call the symbols $>,<$ the {\em core symbols}.
A {\em core diagram} is a
 weight diagram which does not contain symbols $\times$
 and does not have a sign.

For a weight diagram $f$ we denote by $\core(f)$ the core diagram which is
obtained from the diagram of $f$ by replacing all symbols $\times$ by $\circ$ and deleting the sign. For instance,
$\core(<\circ\times >)=<\circ\circ >$. For a weight $\lambda$ 
we set 
$$\core(\lambda):=\core(\diag(\lambda)).$$

\subsubsection{}\label{Llambda}
We say that a $\fg$-central character is {\em dominant} if
$\cF(\fg)$ contains modules with this central character.
We denote by $\chi_{\lambda}$ the central character
of $L(\lambda)$. For a dominant central character $\chi$ 
we set
$$\Lambda^{\chi}:=\{\lambda\in \Lambda^+_{m|n}| \chi_{\lambda}=\chi\}.$$

For $\fgl(m|n),\osp(2m+1|2n)$-case  the dominant central characters are parametrized by the core diagrams, i.e.
for $\lambda,\nu\in\Lambda^+_{m|n}$
$$\chi_{\lambda}=\chi_{\nu} \ \Longrightarrow\ \ \ \core(\lambda+\rho)=\core(\nu+\rho);$$
for $\osp(2m|2n)$ the same holds for the atypical dominant central characters and one has
$$\chi_{\lambda}\in\{\chi_{\nu},\chi_{\nu^{\sigma}}\} \ \Longrightarrow\ \ \ \core(\lambda)=\core(\nu).$$

For a dominant central character $\chi=\chi_{\lambda}$ we set 
$\core(\chi):=\core(\lambda)$.
By above, a dominant central character is determined by its core  for $\fg=\fgl(m|n), \osp(2m+1|2n)$;
for $\osp(2m|2n)$ this holds for atypical dominant central characters.

For $\osp(2m|2n)$-case we introduce $t\in\{0,2\}$ for each dominant
central character $\chi$ (resp., for each $\lambda\in\Lambda^+_{m|n}$) in the following way: 
 $t=0$ if $\core (\chi)$ (resp., $\core(\lambda)$)
has an empty zero position and $t=2$ otherwise (i.e., the zero position
is occupied by $>$); for $\osp(2m+1|2n)$ we set $t=1$ and for $\fgl(m|n)$ we set $t:=0$. We will sometimes use the notation $t(\chi)$ or $t(\lambda)$; one has $t(\lambda):=t(\chi_{\lambda})$.

\subsubsection{}
We say that a diagram $f$ is {\em core-free} if $\core(f)=\emptyset$
or $\fg=\osp(2m|2n)$ and $\core(f)=>$ ($>$ occupies the zero position).

\subsubsection{}
By~\cite{GS}, the blocks in $\cF(\fg)$ are parametrized by the dominant central characters; for $\fgl(m|n)$ the block of atypicality
$s$ is equivalent to the block $\chi_0$ in $\fgl(s|s)$;
for $\osp(M|2n)$-case the block of atypicality
$s$ is equivalent to the block $\chi_0$ in $\osp(2s+t|2s)$. 
The equivalences are described in \cite{GS}. 
For $\lambda \in  \Lambda^+_{m|n}$ let $\howl(\lambda)$ be the corresponding weight in  $\chi_0$.
Diagrammatically the passage from $\lambda$ to $\howl(\lambda)$ essentially amounts to removing the core symbols $<,>$ from $\diag(\lambda)$
except for the symbol $>$ at the zero position
in $\osp(2m|2n)$-case (see \cite{GS} \cite{GH} for details).
(In particular, $\howl(\lambda)$ has a core-free  diagram.)
If $\tail(\lambda)$ is defined, then $\tail(\lambda)=\tail(\howl(\lambda))$. 

For example, if $\diag(\lambda)=>\times<\times\circ<\circ\times$,
then
$$\diag(\howl(\lambda))=\left\{\begin{array}{lcl}
\times\times\circ\circ\times & & \text{ for }\fg=\fgl(4|5)\\
>\times\times\circ\circ\times & & \text{ for }\fg=\osp(8|10)\\
(+)\times\times\circ\circ\times & & \text{ for }\fg=\osp(9|10).\\
\end{array}\right.$$
%
%
%

\subsection{The functions $||\lambda||$ and 
$||\lambda||_{gr}$}\label{normlambda}
Let $\lambda\in\Lambda_{m|n}^+$ be such that $\lambda+\rho$ is atypical. Set $f:=\diag(\howl(\lambda))$.

\begin{defn}{}
Let   
  $a_1\leq \ldots\leq a_j$ be the coordinates of the symbols $\times$ in $f$ ($j=\at(\lambda+\rho)$). 

$$||\lambda||:=\left\{\begin{array}{lcl}
\displaystyle\sum_{i=1}^j a_i& & \text{ for } t(\lambda)\not=2\\
\tail(\lambda)-j+\displaystyle\sum_{i=1}^j a_i & & \text{ for }t(\lambda)=2\\
\end{array}\right.$$
and 
$$||\lambda||_{gr}:=\left\{\begin{array}{lcl}
\displaystyle\sum_{i=1}^j (a_i-a_j)-\frac{j(j-1)}{2}\ & & \text{ for }\fgl(m|n)\\
\displaystyle\sum_{i=1}^j a_i& & \text{ for }\osp(2m|2n)\\
j-\tail(\lambda)+\displaystyle\sum_{i=1}^j a_i & & \text{ for }\osp(2m+1|2n)\\
\end{array}\right.$$
\end{defn}

Notice that $||\lambda||_{gr}\in\mathbb{N}$  and that $||\lambda||\in\mathbb{N}$ for $\osp(M|N)$ and
$||\lambda||\in\mathbb{Z}$ for $\fgl(m|n)$. Moreover, $||\lambda||_{gr}=0$
if and only if $\howl(\lambda)=0$ in the $\osp$-case
and $\howl(\lambda)\in\mathbb{Z}(\sum_{i=1}^j (\vareps_i-\delta_i)$ for
$\fgl$-case (i.e., $\dim L(\howl(\lambda))=1$). 
In the $\osp(2m|2n)$-case  the condition
$||\lambda||=0$ is equivalent to  $\howl(\lambda)=0$   (resp.,
$\howl(\lambda)=0,\vareps_1$) for $\fg=\osp(2m|2n)$ (resp., for $\fg=\osp(2m+1|2n)$).
Note that in the $\fgl(m|n)$-case
$||\lambda||_{gr}$ is invariant under the shift of the diagram.

If $\lambda+\rho$ is typical, we set $||\lambda||_{gr}$
(we do not define $||\lambda||$ in this case).


\subsubsection{Remark}\label{plambdanu}
For $t\not=2$ one has
$(-1)^{p(\howl(\lambda))}=(-1)^{||\lambda||}$.
If $\lambda,\nu$ are stable $\fgl$-weights
with $\chi_{\lambda}=\chi_{\nu}$, then
$(-1)^{p(\lambda)-p(\nu)}=(-1)^{||\lambda||-||\nu||}$.

\subsubsection{Remark}\label{tau}
The odd-looking formulas for $||\lambda||$ with $t(\lambda)=2$ 
and for $||\lambda||_{gr}$ with $t(\lambda)=1$
can be interpreted as follows. 
Consider $f'$ which is obtained from $f$ by removing $>$ from
the zero position and then shifting all entires at the non-zero positions of $f$ by one position to the left; then $||\lambda||=\displaystyle\sum_{i=1}^j a'_i$, where $a'_i$ are the coordinates
of $\times$ in $f'$. The above operation induces 
a bijection
 $\tau$ between the core-free  $\osp(2m+2|2m)$-weights and  the core-free
$\osp(2m+1|2n)$-weights: this bijection, introduced in~\cite{GS},
assigns to $f$ the   diagram $f'$ with the sign
chosen in such a way that  $\tail(f)=\tail(\tau(f))$.
For instance, 
$$\tau(\overset{\times}{>}\circ\times)=-\times\times,\ \ \tau(\overset{\times}{>})=-\times,\ \tau(>\times)=+\times,\ 
\tau(>\circ \times)=\circ \times.
$$
One has
$$||\lambda||=||\tau(\lambda)||,\ \ \ ||\lambda||_{gr}=||\tau(\lambda)||_{gr}.$$

\subsubsection{}\begin{defn}{kostant}
We call $\lambda\in\Lambda^+_{m|n}$ {\em a Kostant weight}
if  $\dim L(\howl(\lambda))=1$. 
\end{defn}

Note that $\dim L(\howl(\lambda))=1$ means that $\howl(\lambda)=0$ (resp.,  $\howl(\lambda)\in\mathbb{Z}\str$) for the $\osp$-case 
(resp., for the $\fgl$-case).

Observe that $||\lambda||_{gr}=0$ if and only if
$\lambda$ is a Kostant weight ($||\lambda||_{gr}$ can be seen as the ``distance'' to the nearest Kostant weight). 
 For instance,  for $\fgl(3|3)$ and
 $\diag(\lambda)=\times\times\circ\times$ one has 
$||\lambda||_{gr}=1$.

For the $\fgl$-case this term was used in \cite{BS2}; in\cite{CHR} these weights are called {\em totally connected}. If in addition $\lambda$ is stable, such weight is called a {\em ground state} in \cite{HW} \cite{W}. The Kostant weights are precisely the weights where all $\times$ are adjacent to each other discounting possible core symbols.

\subsubsection{Remark}\label{KostantKW}
For the $\fgl(m|n)$-case the modules satisfying the KW-conditions (see~\ref{intro1}) were classified in~\cite{CHR}; for the
$\osp(M|N)$-case this was done in~\cite{CK}. The results of these classification can be formulated
as follows. Except for the case $\fg=\osp(2m|2n)$ with $t=0$ and atypicality $1$,
 $L(\lambda)$ satisfies the KW-conditions 
if and only if $\lambda$ is a Kostant weight.
 For the case $\fg=\osp(2m|2n)$ with $t=0$
all simple finite-dimensional modules of atypicality $1$ satisfy the KW-conditions. The latter case has the following interpretation.
 Let  $\cF(\osp(2m|2n))^{\chi}$ be  a block of 
atypicality $1$ with $t=0$. Since $\osp(2|2)=\fsl(1|2)$ we have
$$\cF(\osp(2m|2n))^{\chi}\iso \cF(\osp(2|2))^{\chi_0}=\cF(\fsl(1|2))^{\chi_0}{\iso}
\cF(\fsl(1|1))^{\chi_0}$$
so the image of each simple module $L\in \cF(\osp(2m|2n))^{\chi}$ 
is the trivial $\fsl(1|1)$-module (that is $\dim(\howl(\howl(L))=1$ even if
$\dim\howl(L)\not=1$).

From the above description, it follows that the
KW-conditions are  compatible with the equivalence of categories
given by the transaltion functors $T_{a,a+1}$ described in~\ref{Taa+1}.
This is not true in general:
the switch functor $\cF^{\chi_0}(\osp(2m+1|2n))\iso \cF^{\chi_0}(\osp(2m+1|2n))$
given by $N\mapsto (N\otimes V_{st})^{\chi_0}$ maps the trivial module
(satisfying the KW-conditions) to the standard module, which does not satisfy these conditions.



\section{Parabolic induction, Euler characters and character formulas}
\label{sec:para-ind}
We define parabolic induction functors $\Gamma^i_{\fp,\fq}$ and the Poincar\'e polynomials $K^{\lambda,\mu}_{\fp,\fq}(z)$ in Section \ref{sec:pi} and Euler characters $\mathcal{E}_{\lambda}$ in Section~\ref{termsE}. We give a diagrammatic description of the Poincar\'e polynomials in the $\mathfrak{gl}(m|n)$-case. This leads to a character formula for $\ch L(\lambda)$.

\subsection{The functors $\Gamma^i_{\fp,\fq}$}\label{sec:pi}
Let $\fq\subset\fp\subset\fg$ be a pair of parabolic subalgebras containing $\fb$ and
let $V$ be a finite-dimensional
$\fq$-module. We denote by $\Gamma_{\fp,\fq}(V)$ the maximal
finite-dimensional quotient of the induced module $\cU(\fp)\otimes_{\cU(\fq)}V$. View $\Gamma_{\fp,\fq}$ as a functor from the category of finite-dimensional $\fq$-modules
to the category of finite-dimensional $\fp$-modules
and define the derived functors $\Gamma^i_{\fp,\fq}$ as in \cite{GS}
($\Gamma^i_{\fp,\fq}:=\Gamma_i(P/Q,\bullet)$ in the notations of \cite{GS}).
By \cite{GSBGG}
for $\fgl(m|n)$ these functors coincide with the functors
$\Gamma^i_{\fp,\fq}$ defined in \cite{MS}.

For $\lambda,\mu\in\Lambda^+_{m|n}$ we consider the Poincar\'e polynomial
in the variable $z$ as
$$K^{\lambda,\mu}_{\fp,\fq}(z):=\sum_{i=0}^{\infty} [\Gamma_{\fp,\fq}^i
(L_{\fq}(\lambda)):L_{\fp}(\mu)]z^i,$$
where $L_{\fq}(\lambda)$ (resp. $L_{\fp}(\mu)$) stands for the corresponding simple $\fq$ (resp. $\fp$) module.

\subsubsection{}
 Fix a central character $\chi$ and  a flag of parabolic subalgebras
$$\fg=\fp^{(d)}\supset \fp^{(d-1)}\supset\ldots\supset \fp^{(0)}=\fb,$$
 where $d$ is the defect of $\fg$ and $\fl^{(i)}:=[\fp^{(i)},\fp^{i}]$ 
is given by 
  $\fl^{(i)}=\fgl(i|i)$ for $\fg=\fgl(d|d)$, $\fl^{(i)}=\osp(2i+t|2i)$,
for $\fg=\osp(2d+t|2d)$. 

 The polynomials
 $K^{\lambda,\nu}_{\fp^{(i)},\fp^{(i+1)}}(z)$ for the ``neighbouring parabolics'' were
 given in~\cite{Sselecta} in the $\fgl$-case and in~\cite{GS}
 in the $\osp$-case. In the $\fgl$-case
 we will describe these polynomials 
 in terms of so-called ``arc diagrams'' in~\ref{Poincaregl} below. Using these polynomials
the values $K^{\lambda,\mu}_{\fg,\fp_{\lambda}}(-1)$ can be computed iteratively
 using the formula
\begin{equation}\label{iterative} 
K^{\lambda,\mu}_{\fg,\fq}(-1)=\sum_{\nu} K^{\lambda,\nu}_{\fp,\fq}(-1)
 K^{\nu,\mu}_{\fg,\fp}(-1)\end{equation}
 established in~\cite{GS}, Thm. 1.

\subsection{The terms $\mathcal{E}_{\lambda}$}\label{termsE}
Take $\lambda\in\Lambda^+_{m|n}$ which is assumed to be stable for $\fgl$-case.
Let $\fg_{\lambda}$ be the tail subalgebra of $\lambda$ (see \ref{plambda}). 
As in \cite{GS} we introduce
\begin{equation}\label{Elambda}
\mathcal{E}_{\lambda}:=R^{-1}e^{-\rho}\jJ_W
\bigl(\frac{e^{\lambda+\rho}}{\prod_{\alpha\in\Delta(\fg_\lambda)_1^+}
(1+e^{-\alpha})}\bigr),\end{equation}
see~\ref{JW} for notation.
Clearly, $\cE_{\lambda}\in\cR$, see~\ref{cR} for notation.
By \cite{GS}, Prop.1 (Euler characteristic formula) one has
\begin{equation}\label{Euler}
\mathcal{E}_{\lambda}=\sum_{\mu\in\Lambda^+_{m|n}}
K^{\lambda,\mu}_{\fg,\fp_{\lambda}}(-1)\ch L(\mu),\ \ \text{ where }
\fp_{\lambda}:=\fb+\fg_{\lambda}.
\end{equation}
The sum in the right-hand
side of the formula is finite (see, for example,~\cite{GS}, Lemma 3).
\subsubsection{Remark} The perspective of \cite{GS} \cite{S-irr} is a bit different. The $\mathcal{E}_{\lambda}$'s are defined as actual Euler characters . It is important not to confuse the Euler character $\mathcal{E}_{\lambda}$ of \cite{GS} with the Euler character $\mathcal{E}_{\lambda}$ of \cite{GSBGG}. In the latter case $\mathcal{E}_{\lambda}$ simply equals for $\mathfrak{gl}(m|n)$ the character of the Kac module
 $K(\lambda)$.

\subsubsection{}\label{E0}
In the case when  $\fg_{\lambda}=\fg$
the formula~(\ref{Delta1R}) gives
$\mathcal{E}_{\lambda}=e^{\lambda}=\ch L(\lambda)$.

\subsection{}
\begin{prop}{propE}
Take $\lambda\in\Lambda^+_{m|n}$ which is stable in the $\fgl(m|n)$-case.
Set $s:=\tail(\lambda)$.
\begin{enumerate}
\item
In the $\osp$-case we have
$$
j_sRe^{\rho}\mathcal{E}_{\lambda}=\KW(\lambda+\rho, S_s),
$$
where 
$j_s=\max(1,2^{s-1}s!)$ for $t=0$ and $j_s=2^ss!$
 for $t=1,2$. 
\item
If $\fg=\fgl(m|n)$ and $\lambda$ is stable, then
$$
j_sRe^{\rho}\mathcal{E}_{\lambda}=\KW(\lambda+\rho_L, S_s),
$$
where $j_s=(-1)^{[\frac{s}{2}]}s!$ and $\rho_L$ is the Weyl vector for the base $\vareps^{m-s}(\vareps\delta)^s\delta^{n-s}$.

\end{enumerate}
\end{prop}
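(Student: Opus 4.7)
The plan is to reduce the identity to the Kac--Wakimoto denominator identity for the tail subalgebra $\fg_\lambda$, lifted to the full Weyl group by coset decomposition. Fix left coset representatives $\{w_i\}$ of $W/W(\fg_\lambda)$, so that $\jJ_W(f)=\sum_i\sgn(w_i)\,w_i(\jJ_{W(\fg_\lambda)}(f))$ for any rational $f$. Using that $\lambda$ vanishes on $\fh\cap[\fg_\lambda,\fg_\lambda]$, that $\prod_{\alpha\in\Delta_1^+(\fg_\lambda)}(1+e^{-\alpha})$ is $W(\fg_\lambda)$-invariant, and the normalization identity $\rho|_{\fh_{\fg_\lambda}}=\rho^{(\fg_\lambda)}$ (which holds for the paper's $\rho$ because the shift between the natural and paper's Weyl vectors is a $W(\fg_\lambda)$-invariant central character of $\fg_\lambda$), the classical Weyl denominator identity for $\fg_{\lambda,0}$ gives
$$\jJ_{W(\fg_\lambda)}\Bigl(\frac{e^{\lambda+\rho}}{\prod_{\alpha\in\Delta_1^+(\fg_\lambda)}(1+e^{-\alpha})}\Bigr)=R_{\fg_\lambda}\,e^{\lambda+\rho},$$
so that $Re^\rho\,\mathcal{E}_\lambda=\sum_i\sgn(w_i)\,w_i(R_{\fg_\lambda}\,e^{\lambda+\rho})$.

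For part (i), $S_s\subset\Sigma\cap\fg_\lambda$ by construction, so the Kac--Wakimoto denominator identity applied to $\fg_\lambda=\osp(2s+t|2s)$ directly yields
$$\jJ_{W(\fg_\lambda)}\Bigl(\frac{e^{\rho^{(\fg_\lambda)}}}{\prod_{\beta\in S_s}(1+e^{-\beta})}\Bigr)=j_s\,R_{\fg_\lambda}\,e^{\rho^{(\fg_\lambda)}}$$
with $j_s$ being the respective Kac--Wakimoto factor for each $\osp$-type. Multiplying by $e^{\lambda+\rho-\rho^{(\fg_\lambda)}}$ (which is $W(\fg_\lambda)$-invariant by the normalization above), and then applying $\sum_i\sgn(w_i)w_i$, produces (i).

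For part (ii), the set $S_s$ is not contained in $\Sigma\cap\fg_\lambda$, but the ``aligned'' set $S'_s:=\{\vareps_{m-s+i}-\delta_i\}_{i=1}^s$ consists of the $\vareps$--$\delta$ odd simple roots of $\Sigma_L\cap\fg_\lambda$. I would first apply the Kac--Wakimoto denominator identity for $\fg_\lambda=\fgl(s|s)$ in the base $\Sigma_L\cap\fg_\lambda$, and use the base-independence $Re^\rho=R_Le^{\rho_L}$ together with $\rho_L|_{\fh_{\fg_\lambda}}=\rho_L^{(\fg_\lambda)}$ (same central-character argument), to obtain $\KW(\lambda+\rho_L,S'_s)=s!\,Re^\rho\,\mathcal{E}_\lambda$. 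Next, let $w_0\in W(\fg_\lambda)$ be the involution reversing $\vareps_{m-s+1},\ldots,\vareps_m$ (and fixing the $\delta_i$): it maps $S'_s$ to $S_s$, has sign $\sgn(w_0)=(-1)^{\binom{s}{2}}=(-1)^{[s/2]}$, and fixes $\lambda+\rho_L$ (a direct computation shows that the coefficients of $\rho_L$ at $\vareps_{m-s+1},\ldots,\vareps_m$ are all equal, and likewise the coefficients at $\delta_1,\ldots,\delta_s$). Since $\jJ_W\circ w_0=\sgn(w_0)\jJ_W$, we conclude
$$\KW(\lambda+\rho_L,S_s)=\sgn(w_0)\KW(\lambda+\rho_L,S'_s)=(-1)^{[s/2]}s!\,Re^\rho\,\mathcal{E}_\lambda=j_s\,Re^\rho\,\mathcal{E}_\lambda.$$

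The main technical difficulty I expect is correctly identifying the Kac--Wakimoto factor $j_s$ for each $\fg_\lambda$-type (particularly for $\osp(2s|2s)$ at $t=0$, where the stabilizer structure yields the split $\max(1,2^{s-1}s!)$ rather than a uniform $2^{s-1}s!$), and verifying the two normalization identities $\rho|_{\fh_{\fg_\lambda}}=\rho^{(\fg_\lambda)}$ and $\rho_L|_{\fh_{\fg_\lambda}}=\rho_L^{(\fg_\lambda)}$. The $w_0$-invariance of $\rho_L$ at the tail in the $\fgl$-case is what ultimately produces the extra sign $(-1)^{[s/2]}$ that distinguishes (ii) from (i).
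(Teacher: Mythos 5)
Your proposal is correct and follows essentially the paper's own proof: you decompose $\jJ_W=\jJ_{W/W_{\lambda}}\cdot\jJ_{W_{\lambda}}$, compare inside $\fg_{\lambda}$ the full odd-denominator identity (\Lem{Delta1R}) with the Kac--Wakimoto denominator identity of~\ref{denom}, and use the $W_{\lambda}$-invariance of $\lambda$ and of the relevant $\rho$-shifts, with your extraction of the sign $(-1)^{[s/2]}$ in (ii) via the tail-reversing involution $w_0$ being exactly the computation the paper performs in~\ref{denom} (via \Lem{lemprlambdaS}(i)), merely carried out after inducing to $W$ instead of before. The only slip is the assertion that $\prod_{\alpha\in\Delta(\fg_{\lambda})_1^+}(1+e^{-\alpha})$ is $W_{\lambda}$-invariant --- only its shift by $e^{\rho_{\lambda,0}-\rho_{\lambda}}$ is --- but the identity you deduce from it is correct, being precisely \Lem{Delta1R} applied to $\fg_{\lambda}$.
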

\begin{proof}
For $\osp$-case set $\Sigma_L:=\Sigma$; for $\fgl$-case let $\Sigma_L$ be the base 
$\vareps^{m-s}(\vareps\delta)^s\delta^{n-s}$. Denote by
$\rho_{\lambda}$  (resp., $\rho'_{\lambda}$) the Weyl vector for $\Delta(\fg_{\lambda})$
 with respect to the base
$\Sigma\cap \Delta(\fg_{\lambda})$  (resp., $\Sigma_L\cap \Delta(\fg_{\lambda})$).
Let $W_{\lambda}\subset W$
be the Weyl group of $\fg_{\lambda}$. Note that $S_s$ is the maximail
iso-set in $\Delta(\fg_{\lambda})$.
Combining~(\ref{Delta1R}) and~(\ref{denom}) 
we obtain
$$\jJ_{W_{\lambda}}
\bigl(\frac{e^{\rho_{\lambda}}}
{\displaystyle\prod_{\alpha\in\Delta(\fg_{\lambda})_1^+}
(1+e^{-\alpha})}\bigr)=j_s^{-1}\jJ_{W_{\lambda}}
\bigl(\frac{e^{\rho'_{\lambda}}}
{\displaystyle\prod_{\alpha\in S_s}
(1+e^{-\alpha})}\bigr).$$

One has $\jJ_W=\jJ_{W/W_{\lambda}}\cdot\jJ_{W_{\lambda}}$, where $W/W_{\lambda}$
stands for any set of representatives.
Using $W_{\lambda}$-invariance
of $\lambda$ and $\rho-\rho_{\lambda}$ we obtain
$$\begin{array}{ll}
{R}e^{{\rho}}\mathcal{E}_{\lambda}&=\jJ_{W}\bigl(\frac{e^{\lambda+\rho}}
{\displaystyle\prod_{\alpha\in\Delta(\fg_{\lambda})_1^+}
(1+e^{-\alpha})}\bigr)=\jJ_{W/W_{\lambda}}\bigl(\jJ_{W_{\lambda}}
\bigl(\frac{e^{\lambda+\rho}}
{\displaystyle\prod_{\alpha\in\Delta(\fg_{\lambda})_1^+}
(1+e^{-\alpha})}\bigr)\bigr)\\
&=j_s^{-1}\jJ_{W/W_{\lambda}}\bigl(\jJ_{W_{\lambda}}
\bigl(\frac{e^{\lambda+\rho-\rho_{\lambda}+\rho'_{\lambda}}}
{\displaystyle\prod_{\alpha\in S_s}
(1+e^{-\alpha})}\bigr)\bigr)=j_s^{-1}\jJ_W
\bigl(\frac{e^{\lambda+\rho-\rho_{\lambda}+\rho'_{\lambda}}}
{\displaystyle\prod_{\alpha\in S_s}
(1+e^{-\alpha})}\bigr)\\
& =\KW(\lambda+\rho-\rho_{\lambda}+\rho'_{\lambda}, S_s).
\end{array}$$

For the $\osp$-case one has $\Sigma=\Sigma_L$, so $\rho_{\lambda}=\rho'_{\lambda}$;
this gives (i). For $\fgl$-case notice that
$\Sigma_L$ is obtained from $\Sigma$ by the chain of odd reflections
with respect to the roots in $\Delta(\fg_{\lambda})$;
this gives $\rho_L-\rho=\rho'_{\lambda}-\rho_{\lambda}$ and establishes
(ii).
\end{proof}

\subsection{The  $\osp$-case}\label{Elambdaosp}
Consider the case  $\fg=\osp(M|2n)$ ($M=2m$ or $M=2m+1$).
Theorems 3, 4 and Remark after Thm. 3 of~\cite{GS} imply
that for  $\lambda\in\Lambda^+_{m|n}$ one has
$$\ch L(\lambda)=\sum_{\mu\in\Lambda^+_{m|n}} (-1)^{||\lambda||-||\mu||}
d^{\lambda,\mu}_<\mathcal{E}_{\mu},$$
where
$d^{\lambda,\mu}_<$ is the number of ``increasing paths'' from $\diag(\mu)$ to $\diag(\lambda)$ in the graph $D_{\fg}$ described in~\cite{GS}, Sect. 11; we  will recall some properties of this graph  below.

\subsubsection{Properties of $D_{\fg}$}\label{propertiedDosp}
The connected components
of $D_{\fg}$ correspond to the dominant central characters, so
for each component $D_{\fg}^{\chi}$ we can define $t\in\{0,1,2\}$ via the corresponding central
character. The map $\lambda\to\howl(\lambda)$ 
gives an isomorphism $D_{\fg}^{\chi}\iso D^{\chi_0}_{\osp(2k+t|2k)}$ for
$k:=\at(\chi), t:=t(\chi)$; the map $\tau$ induces an isomorphism $D^{\chi_0}_{\osp(2k+1|2k)}
\iso D^{\chi_0}_{\osp(2k+t|2k)}$.

Assume that $\diag(\mu)$ is a predecessor of $\diag(\lambda)$ in $D_{\fg}$.
From~\cite{GS}, Sect. 11, we conclude that for the cases $t=0,2$ 
 $\diag(\lambda)$  is obtained from $\diag(\mu)$ 
 by moving several symbols $\times$ to the right; moreover, if $\diag(\lambda)$ has a sign, then $\diag(\mu)$ has the same sign. Using the isomorphism
 induced by $\tau$, we conclude that for $t=1$, $\diag(\lambda)$  is obtained from $\diag(\mu)$ 
 by moving several symbols $\times$ to the right or by changing the sign $-$ to the sign $+$. This implies
$$\lambda>\mu,\ \ \ \howl(\lambda)>\howl(\mu),\ \ ||\lambda||\geq ||\mu||,\ \
 ||\lambda||_{gr}>||\mu||_{gr},\ 
\ \  \tail(\mu)\geq \tail(\lambda)$$
and that if $\lambda$ is stable, then $\mu$ is stable. Moreover,  
\begin{equation}\label{uprise}
\lambda-\mu\in \left\{\begin{array}{lcl}
\frac{1}{2}\mathbb{N}\Pi_0+\mathbb{N}(\delta_n-\vareps_m)+\mathbb{N}(\delta_n+\vareps_m)
 & & \text{ for } t=0\\
\frac{1}{2}\mathbb{N}\Pi_0& & \text{ for } t=1,2.
\end{array}
\right.
\end{equation}

By above, $D_{\fg}$ is $\mathbb{N}$-graded
with respect to $||\ ||_{gr}$ (if $\diag(\mu)$ is a predecessor of $\diag(\lambda)$,
then $||\mu||_{gr}<||\lambda||_{gr}$).
In particular,  each vertex in $D_{\fg}$ has finitely many predecessors.

The map $\tau$ described in~\ref{tau} gives an isomorphism of 
the graph $D_{\osp(2m+1|2n)}$ and the subgraph of
$D_{\osp(2m+2|2n)}$ which correspond to the union of connected components
 with $t=2$.

\subsubsection{}
We conclude that for $\osp(M|N)$ we have
$$\begin{array}{l}
d^{\lambda,\lambda}_<=1,\ \ \ \   d^{\lambda,\mu}_<=d^{\howl\lambda,\howl\mu}_<\geq 0,\\
d^{\lambda,\mu}_<\not=0 \ \Longrightarrow\ 
\chi_{\lambda}=\chi_{\mu},\ \ ||\mu||\leq ||\lambda||,\ \ 
 \tail(\mu)\geq \tail(\lambda),\ 
 ||\mu||_D<||\lambda||_D.
\end{array}$$
Moreover the sum in the right-hand side of the character formula is finite
and the  terms $\{\cE_{\lambda}\}_{\lambda\in\Lambda^+_{m|n}}$ form
a basis in the character ring of $\cF$.

Using~\ref{plambdanu} we obtain
$$(-1)^{p(\lambda)}\ch L(\lambda)=\sum_{\mu\in\Lambda^+_{m|n}} (-1)^{p(\mu)}
d^{\lambda,\mu}_<\mathcal{E}_{\mu}\ \ \text{ if $\lambda$ is stable and }\ 
t(\lambda)\not=2.$$

\subsubsection{Remark} The $\mathfrak{q}(n)$-case can be treated with the same methods. The character of $L(\lambda)$ can be written as a finite sum in the Euler characters where the coefficients are again given by the number of increasing paths in a certain bimarked graph. As for the $\mathfrak{osp}$-case the finiteness is automatic since each vertex $\lambda$ in this graph has a finite number of predecessors. However Y. Su and R. B.~Zhang already obtained in \cite{SZq} a similar character formula based on earlier work of J. Brundan \cite{B-q}, so that we have refrained from including this case.

\subsection{The Poincar\'e polynomials in the $\fgl$-case}\label{Poincaregl}
Let $k$ be the degree of atypicality of $\lambda+\rho$. 
For $i=0,\ldots,k-1$ the polynomials $K^{\lambda,\mu}_{\fp^{(i+1)},\fp^{(i)}}(z)$
were computed in \cite{Sselecta} (see also Cor. 3.8 in \cite{MS}).
We will recall the  diagramic interpretation (which was provided by Serganova in one of 
her wonderful talks) in~\ref{diagramic}.

Let $\fg:=\fgl(m|n)$. We identify a weight $\lambda\in\Lambda^+_{m|n}$
and the diagram of $\lambda+\rho$, which we denote by $\diag(\lambda)$.

\subsubsection{Arc diagrams}
Take a weight  $\nu\in\Lambda^+_{m|n}$. Denote by $\diag(\nu)$ 
the weight diagram of $\nu+\rho$. The arc diagram
$\Arc(\nu)$ consists of the  arcs $\arc(a;a')$,
where $a<a'$ and $\diag(\nu)$ has $\times$ (resp., $\circ$) at the position 
$a$ (resp., $a'$). These arcs satisfy
the following properties: 

\begin{itemize}
\item
each symbol $\times$ is connected by an arc
to exactly one symbol $\circ$;

\item
 each symbol $\circ$ is connected to at most
one symbol $\times$;

\item 
the arcs do not intersect;

\item
 each symbols $\circ$ situated  under an arc
is  connected to a symbol $\times$.
\end{itemize}

The arc diagram
$\Arc(\nu)$ is unique and can be constructed
in the following way:
we pass from right to left through the weight diagram and connect each of the finitely many crosses $\times$ with the next empty symbol to the right by an arc (ignoring core symbols).

\subsubsection{Example}

\begin{center}
 
 \scalebox{0.7}{
\begin{tikzpicture}
\foreach \x in {} 
     \draw[very thick] (\x-.1, .1) -- (\x,-0.1) -- (\x +.1, .1);
\foreach \x in {} 
     \draw[very thick] (\x-.1, -.1) -- (\x,0.1) -- (\x +.1, -.1);
\foreach \x in {-1,0,2,5,6,11} 
     \draw[very thick] (\x-.1, .1) -- (\x +.1, -.1) (\x-.1, -.1) -- (\x +.1, .1);
     \draw[very thick]  node at (1,0) [fill=white,draw,circle,inner sep=0pt,minimum size=6pt]{};
     \draw[very thick]  node at (3,0) [fill=white,draw,circle,inner sep=0pt,minimum size=6pt]{};
     \draw[very thick]  node at (4,0) [fill=white,draw,circle,inner sep=0pt,minimum size=6pt]{};
     \draw[very thick]  node at (7,0) [fill=white,draw,circle,inner sep=0pt,minimum size=6pt]{};
     \draw[very thick]  node at (8,0) [fill=white,draw,circle,inner sep=0pt,minimum size=6pt]{};
     \draw[very thick]  node at (9,0) [fill=white,draw,circle,inner sep=0pt,minimum size=6pt]{};
     \draw[very thick]  node at (10,0) [fill=white,draw,circle,inner sep=0pt,minimum size=6pt]{};
     \draw[very thick]  node at (12,0) [fill=white,draw,circle,inner sep=0pt,minimum size=6pt]{};

\draw (-1,-0.5) node {-1};
\draw (0,-0.5) node {0};
\draw (1,-0.5) node {1};
\draw (2,-0.5) node {2};
\draw (3,-0.5) node {3};
\draw (4,-0.5) node {4};
\draw (5,-0.5) node {5};
\draw (6,-0.5) node {6};
\draw (7,-0.5) node {7};
\draw (8,-0.5) node {8};
\draw (9,-0.5) node {9};
\draw (10,-0.5) node {10};
\draw (11,-0.5) node {11};
\draw (12,-0.5) node {12};


\draw[very thick] [-,black,out=90, in=90](0,0.2) to (1,0.2);
\draw[very thick] [-,black,out=90, in=90](2,0.2) to (3,0.2);
\draw[very thick] [-,black,out=90, in=90](11,0.2) to (12,0.2);
\draw[very thick] [-,black,out=90, in=90](6,0.2) to (7,0.2);
\draw[very thick] [-,black,out=90, in=90](5,0.2) to (8,0.2);
\draw[very thick] [-,black,out=90, in=90](-1,+0.2) to (4,0.2);


\end{tikzpicture} }
\medskip

\text{Arc diagram for $\times \times\circ \times \circ\circ\times\times \circ \circ \circ \circ\times \circ$.}
\end{center}

\subsubsection{Definition}\label{diagramic}
For a weight diagram $f$ we denote by $f^u_a$
the weight diagram obtained from $f$ by interchanging the symbols at the positions $u$ and $a$.

Let $\lambda,\nu\in\Lambda^+_{m|n}$
be such that $(\lambda)=\diag(\nu)_a^u$.
We say that $\lambda$ is obtained from $\nu$ by a {\em move}
if $\diag(\nu)$ has $\times$ at the position $a$, $\circ$ at the position
$u$ and  $u$ lies under the arc originated at $a$, that is 
$\Arc(\nu)$ contains $\arc(a;a')$  with $a<u\leq a'$.
 For such a move we define the weight as the number of arcs
in $\Arc(\nu)$ which are ``strictly above'' $u$
(for instance, if $u=a'$, then the move has zero weight).

Observe that if  $\lambda$ can be obtained from $\nu$ by a move
as above, then such move is unique.
 In this case we set
$b'(\nu;\lambda):=u$. Note that $\diag(\lambda)$ has $\times$ at the $u$-th
position; we set $b(\nu;\lambda):=i+1$, where $i$ is the number
of the symbols $\times$ with the coordinates less than $u$
in $\diag(\lambda)$.

We will consider only the case of stable $\lambda$, so the symbols
$\times$ in $\diag(\lambda)$ preceed the core symbols.
If $\lambda$ is obtained from $\nu$ by a move, then $\nu$ is stable.
A  move is called a {\em non-tail move} if 
$b(\nu;\lambda)>\tail(\lambda)$.

 \subsubsection{Example}

 Take $\nu$ with $\diag(\nu)=0\times\times\times$ with arc diagram

\begin{center}
 
 \scalebox{0.7}{
\begin{tikzpicture}
\foreach \x in {} 
     \draw[very thick] (\x-.1, .1) -- (\x,-0.1) -- (\x +.1, .1);
\foreach \x in {} 
     \draw[very thick] (\x-.1, -.1) -- (\x,0.1) -- (\x +.1, -.1);
\foreach \x in {1,2,3} 
     \draw[very thick] (\x-.1, .1) -- (\x +.1, -.1) (\x-.1, -.1) -- (\x +.1, .1);
     \draw[very thick]  node at (0,0) [fill=white,draw,circle,inner sep=0pt,minimum size=6pt]{};
     \draw[very thick]  node at (4,0) [fill=white,draw,circle,inner sep=0pt,minimum size=6pt]{};
     \draw[very thick]  node at (5,0) [fill=white,draw,circle,inner sep=0pt,minimum size=6pt]{};
     \draw[very thick]  node at (6,0) [fill=white,draw,circle,inner sep=0pt,minimum size=6pt]{};
     \draw[very thick]  node at (7,0) [fill=white,draw,circle,inner sep=0pt,minimum size=6pt]{};

\draw (0,-0.5) node {0};
\draw (1,-0.5) node {1};
\draw (2,-0.5) node {2};
\draw (3,-0.5) node {3};
\draw (4,-0.5) node {4};
\draw (5,-0.5) node {5};
\draw (6,-0.5) node {6};
\draw (7,-0.5) node {7};


\draw[very thick] [-,black,out=90, in=90](1,0.2) to (6,0.2);
\draw[very thick] [-,black,out=90, in=90](2,0.2) to (5,0.2);
\draw[very thick] [-,black,out=90, in=90](3,0.2) to (4,0.2);


\end{tikzpicture} }
\medskip

\text{Arc diagram for $0 \times \times \times \circ$}
\end{center}

 There are $6$ weights $\lambda_1,\ldots,\lambda_6$ which can be obtained from $\nu$; in all cases $b(\nu;\lambda)=3$. For instance,
$\lambda_1$ with
$\diag(\lambda_1)=1\times\times\times$  can be obtained from
$\nu$ by a move of weight $2$ with $b'(\nu;\lambda_1)=4$.
 Similarly, $\lambda_2$ with
$\diag(\lambda_2)=1\times\times\circ\times$  can be obtained from
$\nu$ by a move of weight $1$ with $b'(\nu;\lambda_1)=5$.
 Another example is $\lambda_3$ with
$\diag(\lambda_3)=0\times\circ\times\times$  can be obtained from
$\nu$ by a move of weight $1$ with $b'(\nu;\lambda_1)=4$.

From the weight $\lambda_2$ we can obtain $\mu$  with
$\diag(\mu)=1\times\circ\times\times$ 
by a move of weight $0$ with $b'(\lambda_2;\mu)=4$
and  $b(\lambda_2;\mu)=2$.

Among the above examples only the first move is a tail move.

\subsubsection{}\label{identific-graph}
Let $\lambda$ be a stable weight and
$k:=\at(\lambda+\rho)$.

For $i=1,\ldots,k$ the results of~\cite{Sselecta} give
$$\begin{array}{ll}
K^{\lambda,\lambda}_{\fp^{(i)},\fp^{(i-1)}}(z)=1 \\
K^{\lambda,\mu}_{\fp^{(i)},\fp^{(i-1)}}(z)=z^s\ & \text{ for } \mu\not=\lambda\end{array}$$
if   $\mu$ is obtained from $\lambda$ by a  move of weight $s$
and $b(\lambda;\mu)=i$. In all other cases $K^{\lambda,\lambda}_{\fp^{(i)},\fp^{(i-1)}}(z)=0$. Moreover,
$K^{\lambda,\mu}_{\fg,\fp^{(k)}}(z)=\delta_{\lambda,\mu}$
(see,  Lemma 5 (a ``Typical Lemma'') in~\cite{GS}) if $\fp^{(k)}\not=\fg$.

\subsubsection{}
\begin{lem}{lemAA}
Take a stable weight $\lambda\in\Lambda^+_{m|n}$. Assume that
 $\lambda$ is obtained from $\nu$ 
by a  move of weight $w$.
\begin{enumerate}
\item
Then  $\nu$ is stable, $\core(\lambda)=\core(\nu)$ and
$$\lambda>\nu,\ \ \ \  ||\lambda||-
||\nu||-(w+1)\in 2\mathbb{Z},\ \ \ \tail(\nu)\leq b(\nu;\lambda)$$
\item
If the move is a non-tail move, then
$$||\lambda||_{gr}>||\nu||_{gr},\ \
\tail(\lambda)\leq\tail(\nu).$$
\end{enumerate}
\end{lem}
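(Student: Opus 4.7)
\emph{Proof plan.} I would establish the five assertions one at a time, exploiting the explicit combinatorics of the arc diagram.

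For part (i), a move swaps only the symbols at positions $a$ and $u$ of $\diag(\nu)$, leaving every core symbol $<,>$ in place. Stability of $\lambda$ places every core symbol at a position strictly greater than $u$, hence also greater than $a$; consequently in $\diag(\nu)$ the cross at $a$ and all unaltered crosses still precede every core symbol, so $\nu$ is stable and $\core(\nu)=\core(\lambda)$. At the weight level the move relocates a matched pair of coordinates $(\vareps_i=a,\,\delta_j=-a)$ to $(\vareps_i=u,\,\delta_j=-u)$, so $\lambda-\nu=(u-a)(\vareps_i-\delta_j)$ is a positive multiple of a positive odd root and $\lambda>\nu$.

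For the parity identity, since $\nu$ and $\lambda$ share a core the passage to $\diag(\howl)$ shifts the cross positions identically, whence $||\lambda||-||\nu||=u-a$. Stability of $\lambda$ ensures that no core symbol lies in $(a,u)$. The axiom that every $\circ$ strictly under an arc is connected to a $\times$ produces a perfect pairing of the crosses and circles inside $\arc(a;a')$ by its nested inner arcs. Partitioning these inner arcs by the location of their $\circ$-endpoint relative to $u$---$A$ arcs entirely below $u$, $B$ arcs straddling $u$, $C$ arcs ending exactly at $u$---one reads off
\[
u-a-1=2A+B+C,
\]
while by the definition the weight of the move equals $w=B+C$ (the inner arcs ``strictly above'' $u$, namely those with $\times$-endpoint in $(a,u)$ and $\circ$-endpoint in $[u,a')$). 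Hence $||\lambda||-||\nu||-(w+1)=2A\in 2\mathbb{Z}$. The tail bound $\tail(\nu)\leq b(\nu;\lambda)$ is immediate from the identity $b(\nu;\lambda)=\#\{\text{cross positions of }\diag(\nu)\text{ below }u\}$, together with the observation that the initial block of $\tail(\nu)$ consecutive crosses in $\diag(\nu)$ lies entirely below $u$: otherwise the circle at $u$ in $\nu$ would interrupt that maximal initial cross-run.

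For part (ii), assume $b(\nu;\lambda)>\tail(\lambda)$. A case analysis on whether $a$ belongs to the initial cross-run of $\diag(\nu)$ yields $\tail(\lambda)\leq\tail(\nu)$: if $a$ is outside the run then the runs of $\nu$ and $\lambda$ coincide; if $a$ is inside, the non-tail hypothesis pushes $u$ beyond the run, so removing the cross at $a$ strictly shortens (or shifts) the initial block. The strict inequality $||\lambda||_{gr}>||\nu||_{gr}$ then follows by directly substituting the new position set into the explicit formula for $||\cdot||_{gr}$: replacing a cross position $a$ by a larger position $u$ which lies outside the tail region strictly increases the spread of the configuration.

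The main obstacle is the parity identity in part~(i): one must pin down that the weight $w$ of the move is precisely the number $B+C$ of inner arcs of $\arc(a;a')$ that straddle or end at $u$, and then invoke the exact cross/circle pairing inside $\arc(a;a')$. The pairing closes only because stability of $\lambda$ excludes core symbols from $(a,u)$, which is what reduces the count modulo~$2$ to $2A$.
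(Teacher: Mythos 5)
Your part (i) is correct and is in fact much more detailed than the paper's own treatment, which dismisses (i) as immediate from $\diag(\lambda)=\diag(\nu)^u_a$. Two small repairs are still needed there. First, $\lambda-\nu$ is in general not $(u-a)(\vareps_i-\delta_j)$ for a single pair $(i,j)$: when the moved $\times$ passes over other crosses the $\vareps$- and $\delta$-coordinates get re-sorted, and $\lambda-\nu$ is only a sum of several positive odd roots; this still yields $\lambda>\nu$, so nothing essential is lost. Second, the identification $w=B+C$ is not literally ``by the definition'': the definition counts the arcs strictly above $u$ that are nested in $\arc(a;a')$, i.e. the $B$ straddling inner arcs together with $\arc(a;a')$ itself when $u<a'$, and it does not count an inner arc ending at $u$. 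The two counts agree because, by the axiom that every $\circ$ strictly under an arc is matched, the circle at $u$ is matched to a cross in $(a,u)$ exactly when $u<a'$, so $C=1$ iff $u<a'$; you need this one-line observation before the parity identity $u-a-1=2A+B+C=2A+w$ closes. With it, your pairing argument (using stability of $\lambda$ to exclude core symbols from $(a,u)$) is a valid proof of the parity statement, and your argument for $\tail(\nu)\le b(\nu;\lambda)$ is fine.

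The genuine gap is the inequality $||\lambda||_{gr}>||\nu||_{gr}$ in (ii). ``Replacing a cross position $a$ by a larger position $u$ strictly increases the spread'' is not a proof, because $||\cdot||_{gr}$ is normalized by the leftmost cross: writing $j$ for the number of crosses and $m_\nu,m_\lambda$ for the leftmost cross coordinates of $\diag(\nu),\diag(\lambda)$, one gets $||\lambda||_{gr}-||\nu||_{gr}=(u-a)-j\,(m_\lambda-m_\nu)$, and when $a$ is the leftmost cross of $\nu$ the reference point moves to the right, so positivity is not at all automatic; this is precisely where the content of (ii) lies. The paper's own proof isolates this situation differently: it shows that a non-tail move forces $u>\max A_\lambda$, where $A_\lambda$ is the coordinate set of the initial block of crosses of $\diag(\lambda)$, and deduces the containment $A_\lambda\subseteq A_\nu$ (in particular, if $a$ is the leftmost cross of $\nu$ then necessarily $a=\min A_\lambda-1$, so the shift of the minimum and the position of $u$ relative to the block are controlled), and (ii) is then read off from this containment. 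Your proposal proves essentially this containment when handling $\tail(\lambda)\le\tail(\nu)$, but never brings it to bear on the norm inequality; as written, the step would fail (or at least is unjustified) exactly in the case where the moved cross is the leftmost one, so the first half of (ii) remains unproved in your argument.
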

\begin{proof}
The first assertion  immediately follows from the formula
$\diag(\lambda)=\diag(\nu)_a^u$ for $a<u$.
Consider the case of a non-tail move, i.e. 
$b(\nu;\lambda)\leq\tail(\lambda)$.

 Since $\lambda,\nu$ are stable,
their diagrams start from the subdiagrams $\times\ldots\times$
containing, respectively, $\tail(\lambda)$ and $\tail(\nu)$ symbols $\times$.
Let $A_{\lambda}$ (resp., $A_{\nu}$) be  the coordinates of symbols $\times$ in these subdiagrams (one has 
$A(\lambda)=\{u_{\lambda}+i\}_{i=0}^{\tail(\lambda)-1}$, where
$u_{\lambda}$ is the minimal coordinate of the non-empty symbol
in $\diag(\lambda)$).
The inequality $b(\nu;\lambda)\leq\tail(\lambda)$ means that
$u>\max A_{\lambda}$. This gives $A_{\lambda}\subset A_{\nu}$
and implies (ii).
\end{proof}

\subsection{Graph $D_{\fg}$}\label{graphDg}
Let $D_{\fg}$ be a graph with the set of vertices
enumerated by  $\Lambda^+_{m|n}$. We identify the weight $\lambda$
with $\diag(\lambda)$.
We join $f,g$ by the edge $f {\longrightarrow}g$
if $\core(f)=\core(g)$ and
$\howl(g)$ is obtained from $\howl(f)$ by a non-tail move described in~\ref{Poincaregl}.

Recall that  $f {\longrightarrow}g$ implies that $\howl(g)$ is obtained from
$\howl(f)$ by moving $\times$ from a position $a$ to an empty position
$u>a$. We mark each edge by the corresponding $u$.

\subsubsection{Subgraphs $D^{\chi}_{\fg}$}\label{conn}
Clearly, if $\lambda$ and $\nu$ lie in the same connected component
of $D_{\fg}$, then $\chi_{\lambda}=\chi_{\nu}$.
Denote by $D^{\chi}_{\fg}$ the full subgraph with the vertices 
$\lambda$ such that $\chi_{\lambda}=\chi$. 
If $\chi$ has atypicality $s$, then
 the map $f\mapsto \howl(f)$  gives an isomorphism
of $D^{\chi}_{\fg}$ and $D^{\chi_0}_{\fgl(s|s)}$.  

If $\at \lambda\leq 1$, then the corresponding vertex is isolated.
It is not hard to see that $D^{\chi}_{\fg}$ is connected for $\at\chi>1$.

\subsubsection{}
\begin{cor}{corAA}
\begin{enumerate}

\item
Let $\nu$ be a predecessor of $\lambda$. Then 
$\diag(\lambda)$ is obtained from $\diag(\nu)$ by moving some symbols
$\times$ to the right. In particular, $\core(\lambda)=\core(\nu)$,
$\lambda>\nu$ and
$$\howl(\lambda)>\howl(\nu),\ \ ||\lambda||>||\nu||,\ \
 ||\lambda||_{gr}>||\nu||_{gr},\ 
\ \ \fg_{\howl(\lambda)}\subset\fg_{\howl(\nu)}.$$

\item
Any vertex in $D_{\fg}$ has a finite number of predecessors.
\end{enumerate}
\end{cor}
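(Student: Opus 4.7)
The plan for part (i) is to first handle a single edge $\nu\to\lambda$ in $D_{\fg}$ and then iterate. By the definition of $D_{\fg}$ in~\ref{graphDg} we have $\core(\nu)=\core(\lambda)$ and $\howl(\lambda)$ is obtained from $\howl(\nu)$ by a single non-tail move in the sense of~\ref{diagramic}, i.e.\ by moving one symbol $\times$ strictly to the right.  All the inequalities stated in~(i) follow for such a single edge from~\Lem{lemAA}: its parts (i)--(ii) give $\lambda>\nu$, $\howl(\lambda)>\howl(\nu)$, $||\lambda||_{gr}>||\nu||_{gr}$, and $\tail(\lambda)\leq\tail(\nu)$; the inclusion $\fg_{\howl(\lambda)}\subseteq\fg_{\howl(\nu)}$ follows at once because tail subalgebras are the canonical equal-rank subalgebras indexed by the tail value.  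The strict inequality $||\lambda||>||\nu||$ comes from the $\fgl$-formula $||\lambda||=\sum_i a_i$ of~\ref{normlambda}: moving one $\times$ from $a$ to $u>a$ increases this sum by $u-a\geq 1$.  For a general predecessor reached by a directed path one iterates, and the strict inequalities compose; the cumulative effect of the individual moves is precisely that $\diag(\lambda)$ is obtained from $\diag(\nu)$ by shifting some $\times$'s rightward (skipping over any core symbols, which are preserved along each edge).

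The plan for part (ii) is to establish the finiteness of the set of direct predecessors of $\lambda$ in $D_{\fg}$ and then combine it with the $\mathbb{N}$-grading of $D_{\fg}$ by $||\cdot||_{gr}$ supplied by (i), by induction on $||\lambda||_{gr}\in\mathbb{N}$.  Once the direct-predecessor finiteness is in hand, the base case $||\lambda||_{gr}=0$ forces $\lambda$ to have no incoming edges (any direct predecessor would have strictly smaller $||\cdot||_{gr}$), and the inductive step writes the set of all predecessors of $\lambda$ as the union of $\{\lambda\}$ with the predecessor sets of its finitely many direct predecessors, each finite by induction.

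The main technical step, and the one I expect to be the key point, is bounding the set of direct predecessors.  Let $c_1<\cdots<c_k$ be the positions of the crosses in $\howl(\lambda)$.  A direct predecessor $\nu$ of $\lambda$ is specified by a pair $(u,a)$ with $u=c_i$ for some $i\in\{\tail(\lambda)+1,\ldots,k\}$ (a finite set already) and $a<u$ a circle position of $\howl(\lambda)$, such that $\howl(\nu)=\howl(\lambda)^{u}_{a}$ admits an arc originating at $a$ whose endpoint is $\geq u$.  To bound $a$ from below, I would argue that if $a<c_1-1$, then in $\howl(\nu)$ the position $a+1$ is an empty position (as $a+1<c_1$ implies $a+1\notin\{c_1,\ldots,c_k\}$) and cannot be claimed as the endpoint of any arc starting from a cross to its right, since the remaining crosses of $\howl(\nu)$ all lie in $\{c_1,\ldots,c_k\}$ and their arcs land strictly to their right, hence at positions $\geq c_1>a+1$.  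Consequently the right-to-left arc construction assigns to the cross at $a$ in $\howl(\nu)$ the short arc $a\to a+1$, whose endpoint $a+1<c_1\leq u$ fails to reach $u$.  Therefore $a$ must lie in the finite range $[c_1-1,u-1]$, and only finitely many pairs $(u,a)$ remain, completing the argument.
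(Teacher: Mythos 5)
Your proof of part (i) is essentially the paper's: a single edge is handled by \Lem{lemAA} applied at the level of $\howl$, and the statements are then iterated along the path; this matches the paper's (equally terse) argument. For part (ii) you take a genuinely different route. The paper reduces to the core-free $\fgl(s|s)$ case and traps \emph{all} predecessors at once: by (the proof of) \Lem{lemAA}, any predecessor $\nu$ satisfies $A_{\lambda}\subset A_{\nu}$ and $||\nu||_{gr}<||\lambda||_{gr}$, which forces every non-empty symbol of $\diag(\nu)$ into an explicit window of length about $||\lambda||_{gr}$ around $u_{\lambda}$, so there are finitely many such $\nu$. You instead bound the set of \emph{direct} predecessors by an arc-diagram argument -- if the source position $a$ of the moved cross satisfied $a<c_1-1$, then in $\howl(\nu)$ the position $a+1$ is an unclaimed empty position (all other crosses sit at positions $\geq c_1$ and their arcs go rightward), so the arc from $a$ is $\arc(a;a+1)$ and the move to $u\geq c_1>a+1$ would be illegal -- and then propagate finiteness to all ancestors by induction on the $\mathbb{N}$-grading $||\cdot||_{gr}$ furnished by (i). Both arguments are correct (your bound $a\in[c_1-1,u-1]$, $u\in\{c_{\tail+1},\dots,c_k\}$, together with the injectivity of $\howl$ on a block, does give finitely many in-neighbours, and the grading makes the induction work). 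The paper's version buys an explicit finite region containing every predecessor, i.e.\ an effective bound on the support of the character formula, while yours is more local and elementary: it uses only the definition of a move and the grading, and avoids invoking the containment $A_{\lambda}\subset A_{\nu}$ from the proof of \Lem{lemAA}. Two small points of hygiene: the non-tail restriction on $u$ is not needed for finiteness (any subset of the $k$ crosses is finite), and your appeal to ``$\lambda>\nu$ from \Lem{lemAA}'' strictly gives $\howl(\lambda)>\howl(\nu)$; the inequality for $\lambda,\nu$ themselves comes, as in the paper, from the diagram-level description of the edge (moving a cross rightward past core symbols), which you do state.
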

\begin{proof}
Let $\nu$ be a predecessor of $\lambda$. Then $\diag(\lambda)$ is obtained from $\diag(\nu)$ by moving several symbols $\times$ to the right; this gives $\lambda>\nu$ and $||\lambda||>||\nu||$; the rest of the formulas
in (i) follow from~\Lem{lemAA}.
By above, for (ii) it is enough to consider the case $\fg=\fgl(s|s)$
and $\core(\lambda)=\emptyset$.
By~\Lem{lemAA}, the set of  predecessors  of ${\lambda}$ in $D_{\fg}$
lie in the following set
$$\{\nu\in\Lambda^+_{s|s}|\ \  
\core(\nu)=\emptyset,\ \ ||\nu||_{gr}<||\lambda||_{gr},\  A_{\lambda}\subset A_{\nu}\},$$
where $A_{\lambda}, A_{\nu}$ as in the proof of~\Lem{lemAA}.
In particular, the coordinates of all non-empty symbols in $\diag(\nu)$ 
lie between $u_{\lambda}-s$ and $u_{\lambda}-s+||\lambda||_{gr}$, where $u_{\lambda}$ is the minimal coordinate 
of the non-empty symbol in $\diag(\lambda)$. This gives (ii).
\end{proof}

\subsubsection{}
We call a path in $D_{\fg}$ {\em increasing} (resp., {\em decreasing}) if 
the marks strictly increase (resp., decrease) along the path.

\subsubsection{Example}
If $\lambda$ is a Kostant weight, then $\mathcal{E}_{\lambda}=e^{\lambda}$.

For $\fgl(n|n)$ the adjoint representation $\mathbf{Ad}$ has a three step Loewy filtration \[ \mathbf{Ad} = \begin{pmatrix} \mathbb{C} \\ \Pi(L(\epsilon_1 - \delta_n)) \\ \mathbb{C} \end{pmatrix}\] The middle term with highest weight $\lambda=\vareps_1-\delta_n$ corresponds to the diagram
$$-n\underbrace{\times\times...\times}_{n-1\text{ times}}\circ \times;$$
this diagram is connected to the Kostant weights
$$-n\underbrace{\times\times...\times}_{n\text{ times}}\ \ \ \ \ \ \ \
-n-1\underbrace{\times\times...\times}_{n\text{ times}}
$$
the corresponding weights $0$ and $\mu=\displaystyle\sum_{i=1}^n (\delta_i-\vareps_i)$.
This gives
$$\ch L(\vareps_1-\delta_3)=\mathcal{E}_{ \vareps_1-\delta_n}-1-e^{\mu},\ \ \
\sch L(\vareps_1-\delta_3)=\mathcal{E}^-_{ \vareps_1-\delta_n}+1+e^{\mu}.
$$
Notice that $\sdim(\mathbf{Ad}) = 0$, hence $\sdim L(\vareps_1-\delta_n)=2$.

\subsubsection{Examples}\label{exaDg}
For $\fgl(1|1)$ the graph $D_{\fg}$ does not have edges.

For $\fgl(2|2)$  the vertices $\lambda$ with $\core(\lambda)\not=\emptyset$
  are isolated;
the vertices with $\core(\lambda)=\emptyset$ form a connected component
$D^{\chi_0}_{\fg}$  of the following form

$$\begin{array}{ccc}
&\searrow\ \text{\tiny{3}} & \\
0\times\times & \xrightarrow{3} & 0\times\circ\times  \xrightarrow{4} 0\times\circ\circ\times \
\xrightarrow{5} 0\times\circ\circ\circ\times
\cdots\\
&\searrow\ \text{\tiny{4}} & \\
1\times\times &\xrightarrow{4} & 1\times\circ\times  \xrightarrow{5} 1\times\circ\circ\times \
\xrightarrow{6} 1\times\circ\circ\circ\times
\cdots\\
\end{array}$$
The left column corresponds to the Kostant weights ($||\lambda||_{gr}=0$);
the next column to the $\lambda$s with $||\lambda||_{gr}=1$ and so on.

\subsubsection{}
\begin{prop}{Kostant}
(i) Each vertex is connected to a Kostant weight by an increasing path.

(ii) The Kostant weights are the sources of the graph $D_{\fg}$.
\end{prop}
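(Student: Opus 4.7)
\emph{Proof plan.} I prove (ii) by direct contradiction and (i) by induction on $||\lambda||_{gr}$.

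For (ii), suppose $\lambda$ is Kostant and $\mu\to\lambda$ is an edge of $D_{\fg}$ with mark $u$. Since the edges of $D_{\fg}$ are defined by non-tail moves on $\howl$-diagrams, I may assume $\core(\lambda)=\emptyset$. Then the $\times$-coordinates $a_1<\cdots<a_k$ of $\diag(\lambda)$ form a contiguous block, so $\tail(\lambda)=k$. By the definition of a move, $u$ is the coordinate of a $\times$ in $\lambda$, hence $u=a_l$ for some $l\le k$, whence $b(\mu;\lambda)=l\le k$. This contradicts the non-tail requirement $b(\mu;\lambda)>\tail(\lambda)=k$, proving that Kostant weights have no predecessors in $D_{\fg}$.

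For (i), the base case $||\lambda||_{gr}=0$ yields a Kostant weight and the empty path works. For the inductive step, take $\lambda$ non-Kostant, write the $\times$-coordinates of $\diag(\howl(\lambda))$ as $a_1<\cdots<a_k$, and set $u:=a_k$. I will construct a predecessor $\mu$ of $\lambda$ in $D_{\fg}$ joined to $\lambda$ by an edge of mark $u$ as follows. Let $l$ be the smallest index for which $a_l,a_{l+1},\ldots,a_k$ are contiguous (so $a_i=a_k-(k-i)$ for $i\ge l$), and obtain $\mu$ from $\howl(\lambda)$ by placing a $\times$ at the position $a:=a_l-1$ (empty in $\lambda$ by the choice of $l$) and replacing the $\times$ at $a_k$ by a $\circ$. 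The right-to-left matching of $\times$'s with $\circ$'s in $\mu$ pairs $a_{k-1},a_{k-2},\ldots,a_l,a_l-1$ consecutively with $a_k,a_k+1,\ldots,a_k+k-l$, so $\arc(a_l-1;a_k+k-l)\in\Arc(\mu)$; the move of the $\times$ from $a_l-1$ to $u=a_k$ is thus valid. Non-Kostantness gives $\tail(\lambda)<k=b(\mu;\lambda)$, so the move is non-tail, making $\mu\to\lambda$ an edge of $D_{\fg}$; \Lem{lemAA}(ii) then forces $||\mu||_{gr}<||\lambda||_{gr}$, enabling the induction.

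By the inductive hypothesis there is an increasing path $\kappa\to\cdots\to\mu$ from a Kostant $\kappa$; I check that its extension by $\mu\to\lambda$ remains increasing. The mark of any edge of $D_{\fg}$ is, by definition, the coordinate of a $\times$ in its target, so the final mark $u_r$ of the path into $\mu$ is a coordinate of a $\times$ in $\mu$. The $\times$-coordinates of $\mu$ are $\{a_1,\ldots,a_{k-1}\}\cup\{a_l-1\}$; since $a_l-1<a_l\le a_k$ and $a_{k-1}<a_k$, all of them are strictly less than $u=a_k$. Hence $u_r<u$, and by the increasing property of the path all earlier marks are $<u$ as well. The extended path is therefore still increasing, completing the induction.

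The main obstacle is the arc-structure claim in the inductive step, namely $\arc(a_l-1;a_k+k-l)\in\Arc(\mu)$. I plan to verify this by the matched-bracket interpretation of the right-to-left arc assignment: in $\mu$ the only $\times$'s to the right of $a=a_l-1$ are at $a_l,a_{l+1},\ldots,a_{k-1}$, and the positions $\ge a_k$ are all $\circ$, so these $k-l$ $\times$'s together with the $\times$ at $a$ pair consecutively with the $\circ$'s at $a_k,a_k+1,\ldots,a_k+k-l$. The delicate point, and the reason for the careful choice $a=a_l-1$ rather than the more naive $a=a_k-1$, is to avoid collision with the existing $\times$ at $a_{k-1}$ in the borderline case $a_{k-1}=a_k-1$.
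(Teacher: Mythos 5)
Your proposal is correct and takes essentially the same route as the paper: both produce a predecessor of a non-Kostant weight by moving the rightmost $\times$ of the howl diagram back to the nearest empty position to its left (your $a_l-1$ is exactly the paper's maximal $a<u$ with $f(a)=\circ$), and both conclude via the $\mathbb{N}$-grading by $||\cdot||_{gr}$, your induction together with the observation that every mark is a $\times$-coordinate of its target replacing the paper's explicit iteration of the construction, and your direct computation $b(\mu;\lambda)\le\tail(\lambda)$ replacing the paper's grading inequality for (ii). The only thing worth stating explicitly is that (ii) also requires that non-Kostant weights are not sources, which is exactly the predecessor construction in your inductive step, so the content is already present.
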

\begin{proof}
For (i)  take $f_0$ with $||f_0||_{gr}\not=0$ and let $u$ be the coordinate of
the rightmost symbol $\times$ (which is not in the tail, since $f$ is not a Kostant weight) and $a$ be maximal such that
$a<u$ and $f(a)=\circ$. Set $f_1=(f)^u_a$. Then $f=(f_1)^u_a$
and $f$ is obtained from $f_1$ by a non-tail move with $b'(f_1,f)=u$.
If $||f_1||_{gr}\not=0$, we 
 construct $f_2$ by the same rule.
Continuing this process we obtain an increasing path
$$f_r\to f_{r-1}\to \ldots \to f_1$$
with $||f_{i+1}||_{gr}<||f_i||_{gr}$;
thus  for some $r$ one has $||f_r||_{gr}=0$. This gives (i). 
Now (ii) follows from (i) and the inequality
$||\lambda||_{gr}>||\nu||_{gr}$ if $\nu$ is a predecessor of
$\lambda$. 
 \end{proof}

\subsection{Character formula for $\fgl(m|n)$}\label{Taa+1}
Take $\fg=\fgl(m|n)$ with a distinguished base $\Sigma$. For $\mu,\lambda\in\Lambda^+_{m|n}$ denote by $\cP^{>}(\mu,\lambda)$  the set of decreasing paths from $\mu$ to $\lambda$  and  by
$d^{\lambda,\mu}_<$  the number of increasing paths
from $\mu$ to $\lambda$ in the graph $D_{\fg}$. Set
$$d'_{\lambda,\mu}:=(-1)^{||\lambda||-||\mu||} \sum_{P\in \cP^{>}(\mu,\lambda)} (-1)^{length P}.$$
By above, $d^{\lambda,\mu}_<=d^{\howl(\lambda),\howl(\mu)}_<$
and $d'_{\lambda,\mu}=d'_{\howl(\lambda),\howl(\mu)}$.

\subsubsection{}
Let $\Lambda^{\chi}_{st}$ be the set of stable weights in $\Lambda^{\chi}$. In the next section we will prove the following formulas for $\lambda\in\Lambda^{\chi}_{st}$:
\begin{equation}\label{EL}
\begin{array}{l}
\cE_{\lambda}=\displaystyle\sum_{\mu\in\Lambda^{\chi}} 
d'_{\lambda,\mu} \ch L(\mu),\\
\ch L(\lambda)=\displaystyle\sum_{\mu\in\Lambda^{\chi}_{st}} (-1)^{||\lambda||-||\mu||} 
d^{\lambda,\mu}_<\mathcal{E}_{\mu}.
\end{array}\end{equation}

Notice that, by~\Cor{corAA},  the right-hand
sides of the above  formulas have finite number of non-zero summands.

\subsubsection{}
For a non-stable weight $\lambda$ we introduce $\cE_{\lambda}$ by the first formula
in~(\ref{EL}), i.e. 
\begin{equation}\label{Ela}
\cE_{\lambda}:=\sum_{\mu\in\Lambda^{\chi}} 
d'_{\lambda,\mu}  \ch L(\mu).\end{equation}
If $\lambda$ is stable and $\mu$ is not stable, then $d^{\lambda,\mu}_<=0$ 
(by~\Cor{corAA}). Therefore the second formula in~(\ref{EL}) can be rewritten as
\begin{equation}\label{EL1}
\ch L(\lambda)=\sum_{\mu\in\Lambda^{\chi}} (-1)^{||\lambda||-||\mu||} 
d^{\lambda,\mu}_<\mathcal{E}_{\mu}=\sum_{\mu\in\Lambda^+_{m|n}} (-1)^{||\lambda||-||\mu||} 
d^{\lambda,\mu}_<\mathcal{E}_{\mu}\end{equation}
if $\lambda$ is stable.
By~(\ref{EL}) the matrices $(d'_{\lambda,\nu})=(d'_{\howl(\lambda),\howl(\nu)})$
 and $((-1)^{||\lambda||-||\mu||} 
d^{\lambda,\mu}_<)$ are mutually inverse. Using~(\ref{Ela}) we deduce~(\ref{EL1})
for  each $\lambda\in\Lambda^+_{m|n}$.

\subsubsection{}
We retain notation of~\ref{ThetaV}. Fix a central character $\chi$
and denote by $\Fin^{\chi}$ the full subcategory of $\Fin$ 
of the modules with the central character $\chi$.
We will consider translation functors $T^{V}_{\chi,\chi'}$ for special cases when these functors are  equivalence of categories and
$V$ is either  the standard representation or its dual. These functors can be desribed as follows.

Recall that for  a weight diagram $f$ denote by $(f)^{a+1}_a$ the diagram
$f'$ obtained from $f$ by interchanging the symbols in the positions $a$ and $a+1$.
We denote by $T_{a,a+1}$  the corresponding operations on
${\Lambda}^{\geq}_{m|n}$ and on the central characters: 
$T_{a,a+1}(\nu)=\nu'$ such that
$\diag(\nu')=T_{a,a+1}(\diag(\nu))$ and $T_{a,a+1}(\chi)=\chi'$
such that $\core(\chi')=T_{a,a+1}(\core(\chi))$.

For $V=V_{st}, V_{st}^*$ 
the translation functor 
$T^{V}_{\chi,\chi'}:\Fin^{\chi}\iso\Fin^{\chi'}$ is an equivalence of categories if $\chi'=T_{a,a+1}(\chi)$  for some $a$ and exactly one of the positions
$a,a+1$ in $\core(\chi)$ is empty (so
for $\lambda\in\Lambda^{\chi}$ exactly one of the positions
$a,a+1$ in $\diag(\lambda)$ is occupied by a core symbol and $T_{a,a+1}$
interchanges this core symbol with $\circ$ or $\times$ respectively).

One has
$$T^{V}_{\chi,\chi'}(L(\lambda))=L(T_{a,a+1}(\lambda)).$$
Note that $\howl(\lambda)=\howl(T_{a,a+1}(\lambda))$.
By a slight abuse of notation, we  denote such functor by $T_{a,a+1}$.

\subsubsection{}
\begin{lem}{lemEla}
For $\lambda':=T_{a,a+1}(\lambda)$ one has
$$Re^{\rho}\cE_{\lambda}=\Theta_{\chi,\chi'}(Re^{\rho}\cE_{\lambda'})$$
where 
 $\Theta_{\chi,\chi'}:\cR_{\Sigma}\to\cR_{\Sigma}$ is  the  ring homomorphism
corresponding to  $T_{a,a+1}$ (see~\ref{ThetaV}).
\end{lem}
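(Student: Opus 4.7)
The plan is to reduce the identity at the level of $\cE$'s to the obvious identity at the level of irreducibles, where it becomes a direct consequence of the defining property of $\Theta_{\chi,\chi'}$ from~\ref{trans}.

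First I would expand both sides via formula~(\ref{Ela}):
\begin{equation*}
\cE_{\lambda}=\sum_{\mu\in\Lambda^{\chi}}d'_{\lambda,\mu}\,\ch L(\mu), \qquad \cE_{\lambda'}=\sum_{\mu'\in\Lambda^{\chi'}}d'_{\lambda',\mu'}\,\ch L(\mu').
\end{equation*}
By \Cor{corAA} (applied via $\howl$) each sum has only finitely many nonzero terms, so one may freely interchange the finite sums with $\Theta_{\chi,\chi'}$ using its additivity.

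Next I would exploit that $T_{a,a+1}$ restricts to a bijection $\Lambda^{\chi}\to\Lambda^{\chi'}$ with $\howl(T_{a,a+1}(\mu))=\howl(\mu)$ for every $\mu\in\Lambda^{\chi}$. Combined with $\howl(\lambda)=\howl(\lambda')$ and the $\howl$-invariance $d'_{\nu,\mu}=d'_{\howl(\nu),\howl(\mu)}$ from~\ref{Taa+1}, this yields $d'_{\lambda',\mu'}=d'_{\lambda,\mu}$ whenever $\mu'=T_{a,a+1}(\mu)$. Re-indexing the sum for $\cE_{\lambda'}$ along this bijection, the claim becomes
\begin{equation*}
Re^{\rho}\sum_{\mu\in\Lambda^{\chi}}d'_{\lambda,\mu}\,\ch L(\mu) \;=\; \Theta_{\chi,\chi'}\Bigl(Re^{\rho}\sum_{\mu\in\Lambda^{\chi}}d'_{\lambda,\mu}\,\ch L(T_{a,a+1}(\mu))\Bigr),
\end{equation*}
which by linearity reduces to the single identity $Re^{\rho}\ch L(\mu)=\Theta_{\chi,\chi'}(Re^{\rho}\ch L(T_{a,a+1}(\mu)))$ for every $\mu\in\Lambda^{\chi}$.

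Finally I would invoke the construction of $\Theta_{\chi,\chi'}$ in~\ref{trans}: since $T_{a,a+1}:\Fin^{\chi}\iso\Fin^{\chi'}$ is an equivalence of categories sending $L(\mu)$ to $L(\mu')$, the ring homomorphism $\Theta_{\chi,\chi'}$ is designed precisely so as to implement the inverse equivalence on the $Re^{\rho}$-shifted characters, which supplies the required identity at the level of irreducibles. The main obstacle is this last step: one has to unpack from~\ref{trans} that $\Theta_{\chi,\chi'}$ indeed acts on $Re^{\rho}\ch L(\mu')$ as claimed, which is a direct consequence of the definition (the crucial inputs being that $T_{a,a+1}$ is an equivalence and that the projector $\cP_{\chi'}$ corresponds to the central character of $\mu'$). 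Once this is in place, the rest of the argument is purely formal bookkeeping using~(\ref{Ela}) and the $\howl$-invariance of~$d'$.
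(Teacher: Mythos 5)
Your overall route is the same as the paper's: expand $\cE_{\lambda}$ and $\cE_{\lambda'}$ via~(\ref{Ela}), note the sums are finite (\Cor{corAA}), use $\howl(T_{a,a+1}(\mu))=\howl(\mu)$ together with $d'_{\nu,\mu}=d'_{\howl(\nu),\howl(\mu)}$ to get $d'_{\lambda',\mu'}=d'_{\lambda,\mu}$, and then apply the translation homomorphism termwise to irreducible characters via~\ref{ThetaV}. This is exactly the paper's bookkeeping.

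The one step you should repair is the direction in which you apply $\Theta_{\chi,\chi'}$. By the definition in~\ref{ThetaV}, $\Theta^V_{\chi,\chi'}(f)=P_{\chi'}\bigl(\ch V\cdot P_{\chi}(f)\bigr)$, so it intertwines the \emph{forward} functor: $\Theta_{\chi,\chi'}(Re^{\rho}\ch L(\mu))=Re^{\rho}\ch L(T_{a,a+1}(\mu))$ for $\mu\in\Lambda^{\chi}$, and it annihilates $Re^{\rho}\ch L(\mu')$ for $\mu'\in\Lambda^{\chi'}$ because of the inner projector $P_{\chi}$. Hence the termwise identity you reduce to, $Re^{\rho}\ch L(\mu)=\Theta_{\chi,\chi'}(Re^{\rho}\ch L(T_{a,a+1}(\mu)))$, is not what~\ref{ThetaV} gives --- read literally, its right-hand side vanishes --- and the assertion that ``$\Theta_{\chi,\chi'}$ implements the inverse equivalence'' does not follow from the definition. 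The paper's proof avoids this by running the computation forward: it shows $\Theta_{\chi,\chi'}(Re^{\rho}\cE_{\lambda})=\sum_{\mu\in\Lambda^{\chi}}d'_{\lambda,\mu}\,Re^{\rho}\ch L(\mu')=Re^{\rho}\cE_{\lambda'}$ (the displayed equation in the lemma statement has the two sides' roles interchanged relative to this, an inconsistency already present in the paper's own notation). To keep your direction you would have to replace $\Theta_{\chi,\chi'}$ by the homomorphism attached to the inverse translation functor (with $V$ exchanged with its dual and the subscripts swapped); otherwise simply apply $\Theta$ to $Re^{\rho}\cE_{\lambda}$ as the paper does --- with that change the rest of your argument goes through verbatim.
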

\begin{proof}
For each $\mu\in \Lambda^{\chi}$ set $\mu':=T_{a,a+1}(\mu)$.
By~(\ref{Ela})
$Re^{\rho}\cE_{\lambda}=\sum_{\mu'\in\Lambda^{\chi'}}  d'_{\lambda,\mu}
 Re^{\rho}\ch L(\mu)$.
Using~\ref{ThetaV} we get
$$\Theta_{\chi,\chi'}(Re^{\rho}\cE_{\lambda})=\sum_{\mu\in\Lambda^{\chi}} d'_{\lambda,\mu}
 Re^{\rho}\ch L(\mu').$$
Since $\howl(\mu')=\howl(\mu)$ one has $d'_{\lambda,\mu}=d'_{\lambda',\mu'}$, so
$$Re^{\rho}\cE_{\lambda'}=\sum_{\mu'\in\Lambda^{\chi'}}  d'_{\lambda',\mu'}
 Re^{\rho}\ch L(\mu')=\sum_{\mu'\in\Lambda^{\chi'}}  d'_{\lambda,\mu}
 Re^{\rho}\ch L(\mu')$$
as required.
\end{proof}

\subsection{Another form of the character formula}\label{anotheryear}
In the $\osp$-case we retain the notation of~\Prop{propE} and set 
$$\KW(\nu):=\KW(\nu+\rho, S_{\tail(\nu)}),\ \ j(\nu):=j_{\tail(\nu)}.$$
For $\fgl$-case we will introduce $\KW(\nu)$ in~\ref{y0f}
and set $j(\nu):=\tail(\nu)!$.

\subsubsection{}
\begin{cor}{corchKW} 
$$Re^{\rho}\ch L(\lambda)=\sum_{\mu\in\Lambda^+_{m|n}}(-1)^{||\lambda||-||\mu||}
 \frac{d^{\lambda,\mu}_<}{j(\mu)}\KW(\mu).$$
\end{cor}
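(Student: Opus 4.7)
The corollary is essentially an immediate repackaging of two earlier results, and the plan is to carry out this repackaging carefully.

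First, I would start from the character formula~\eqref{EL1}, which expresses
$$\ch L(\lambda)=\sum_{\mu\in\Lambda^+_{m|n}}(-1)^{||\lambda||-||\mu||}d^{\lambda,\mu}_<\mathcal{E}_{\mu},$$
valid for all $\lambda\in\Lambda^+_{m|n}$ in the $\osp$-case (by the $\osp$-character formula recalled in~\ref{Elambdaosp}) and, in the $\fgl$-case, for all $\lambda$ via the extension discussed at the end of~\ref{Taa+1}. Multiplying both sides by the factor $Re^{\rho}$ moves it inside the sum and reduces the problem to identifying $Re^{\rho}\mathcal{E}_{\mu}$ with $j(\mu)^{-1}\KW(\mu)$ for every $\mu$ appearing with a nonzero coefficient.

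For the $\osp$-case this identification is exactly Proposition~\ref{propE}(i): setting $s=\tail(\mu)$, we have $j_s Re^{\rho}\mathcal{E}_{\mu}=\KW(\mu+\rho,S_s)$, which by the definition $\KW(\mu):=\KW(\mu+\rho,S_{\tail(\mu)})$ and $j(\mu):=j_{\tail(\mu)}$ in~\ref{anotheryear} gives $Re^{\rho}\mathcal{E}_{\mu}=j(\mu)^{-1}\KW(\mu)$. Substituting into the multiplied formula yields the claim in the $\osp$-case. For the $\fgl$-case with stable $\lambda$, Corollary~\ref{corAA} forces every $\mu$ with $d^{\lambda,\mu}_<\neq 0$ to be stable as well (predecessors of a stable weight are stable), so Proposition~\ref{propE}(ii) applies directly to each $\mu$ in the sum and the same substitution works, provided we use the definition of $\KW(\nu)$ in terms of the base $\Sigma_L=\vareps^{m-s}(\vareps\delta)^s\delta^{n-s}$ and the corresponding Weyl vector $\rho_L$.

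The one genuine step is the $\fgl$-case with non-stable $\lambda$, where both $\cE_{\mu}$ and $\KW(\mu)$ must be defined for non-stable $\mu$. Here I would invoke the definition of $\KW(\nu)$ for non-stable $\nu$ to be given in~\ref{y0f} together with Lemma~\ref{lemEla}: choosing a chain of transpositions $T_{a,a+1}$ that carries $\chi$ to a central character whose weights are stable, the lemma propagates the identity $Re^{\rho}\cE_{\mu}=j(\mu)^{-1}\KW(\mu)$ from the stable case to the non-stable case, because the ring homomorphism $\Theta_{\chi,\chi'}$ is compatible with the Kac--Wakimoto numerator and preserves $j(\mu)$ (note $\howl$ is unchanged under $T_{a,a+1}$, so $\tail$ is preserved). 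The anticipated obstacle is therefore purely bookkeeping: one must verify that the definition of $\KW(\mu)$ for non-stable $\mu$ in the $\fgl$-case is normalized so as to make Lemma~\ref{lemEla} imply $Re^{\rho}\cE_{\mu}=j(\mu)^{-1}\KW(\mu)$ on the nose. Once that compatibility is confirmed, assembling~\eqref{EL1} with the identities for $Re^{\rho}\cE_{\mu}$ produces the stated formula.
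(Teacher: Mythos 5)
Your proposal is correct and follows essentially the same route as the paper: the $\osp$-case is exactly the combination of Proposition~\ref{propE}(i) with the character formula of~\ref{Elambdaosp}, and the $\fgl$-case is the combination of~(\ref{EL1}) with the identity $\tail(\mu)!\,Re^{\rho}\cE_{\mu}=\KW(\mu)$, which the paper packages as Corollary~\ref{corEla}(ii) and derives, just as you outline, from the stable case (Proposition~\ref{propE}(ii) together with the sign normalization absorbed into the definition of $\KW(\mu)$ in~\ref{y0f}, cf.~\ref{exa11}) and from the translation-functor compatibilities of Lemma~\ref{lemEla} and Lemma~\ref{TransandKW}(ii). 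The ``bookkeeping'' you flag is precisely what~\ref{exa11} and Lemma~\ref{TransandKW} settle, so no gap remains.
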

\begin{proof}
Combining~\Prop{propE} and~\ref{Elambdaosp} we obtain the assertion for the
$\osp$-case. 
For the $\fgl$-case we combine~(\ref{EL1}), \Lem{lemEla} and~\Cor{corEla} (ii).
\end{proof}

\subsubsection{Remark}
Setting $\KW(L(\nu)):=j(\nu)^{-1}\KW(\nu)$ we obtain the formula~(\ref{GSformula}).

\subsubsection{Remark}The graph $D_{\fg}$
is an oriented graph; this graph
does not have multi-edges for the $\fgl$-case and for the $\osp$-case
with $t=1,2$; for $t=0$ the graph has double edges.

\subsection{Highest weights of $L$ with respect to different bases}\label{hwt}
Fix any base  $\tilde{\Sigma}$ compatible with $\Pi_0$ (i.e.
 $\Delta^+(\tilde{\Sigma})\cap\Delta_0=\Delta^+(\Pi_0)$) 
and denote the
Weyl vector by $\tilde{\rho}$. For a simple finite-dimensional
 module $L$ denote by $\hwt L$ the ``$\rho$-twisted highest
weight of $L$'' i.e. $\hwt_{\tilde{\Sigma}} L=\nu+\tilde{\rho}$, where
$\nu$ is the highest weight of $L$ with respect to $\tilde{\Sigma}$.
If  $\beta\in\tilde{\Sigma}$ is isotropic
and $r_{\beta}$ is the corresponding odd reflection, then
$\hwt_{r_{\beta}\tilde{\Sigma}} L=
\hwt_{\tilde{\Sigma}} L$ if $(\hwt_{\tilde{\Sigma}} L|\beta)\not=0$ and
$\hwt_{r_{\beta}\tilde{\Sigma}} L=
\hwt_{\tilde{\Sigma}} L+\beta$ otherwise. Using this procedure one can compute 
$\hwt_{\tilde{\Sigma}} L(\lambda)$ recursively. 
The character formula in~\Cor{corchKW} allows to give the following formula for 
$\hwt_{\tilde{\Sigma}} L(\lambda)$ for $t(\lambda)=1,2$.

\subsubsection{}
\begin{cor}{cormaxsupp}
  Consider the partial order
$\tilde{>}$ on $\fh^*$ given by $\nu \tilde{>}\mu$ if $\nu-\mu\in\mathbb{N}\tilde{\Sigma}$. View $\KW(\lambda)$ as an element of $\cR_{\tilde{\Sigma}}$.

\begin{enumerate}
\item
In the $\osp$-case with 
$t(\lambda)=1,2$ the weight $\hwt_{\tilde{\Sigma}} L(\lambda)$ is a unique maximal element in
$\supp \KW(\lambda)$ with respect to the partial order $\tilde{>}$.

\item
In the $\osp$-case with 
$t(\lambda)=0$ the same holds if $\delta_n\pm\vareps_m\in\tilde{\Sigma}$.

\item In the $\fgl$-case $\lambda+\rho$ is a unique maximal element in
$\supp \KW(\lambda)$ with respect to the partial order $>$.
\end{enumerate}
\end{cor}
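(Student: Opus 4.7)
The plan is to combine the formula $\KW(\lambda)=j(\lambda)Re^{\rho}\cE_{\lambda}$ from \Prop{propE} with the triangular expansion $\cE_{\lambda}=\sum_{\mu}d'_{\lambda,\mu}\ch L(\mu)$ established in~\ref{Taa+1} (see~(\ref{EL}) and~(\ref{Ela})). Viewed inside $\cR_{\tilde{\Sigma}}$, each factor $Re^{\rho}\ch L(\mu)$ has a unique $\tilde{>}$-maximal term, namely $e^{\hwt_{\tilde{\Sigma}}L(\mu)}$ with coefficient $1$: this follows from writing $Re^{\rho}=R_{\tilde{\Sigma}}e^{\tilde{\rho}}$ and expanding $R_{\tilde{\Sigma}}$ in $\cR_{\tilde{\Sigma}}$ as a formal series whose $\tilde{>}$-leading coefficient is $1$. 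Since $d'_{\lambda,\lambda}=1$, the corollary reduces to the claim
$$(\star)\qquad \hwt_{\tilde{\Sigma}}L(\mu)\ \tilde{<}\ \hwt_{\tilde{\Sigma}}L(\lambda)\quad\text{for every }\mu\neq\lambda\text{ with }d'_{\lambda,\mu}\neq 0.$$
Granted $(\star)$, the weight $\hwt_{\tilde{\Sigma}}L(\lambda)$ appears in $\KW(\lambda)$ only through the $\mu=\lambda$ summand, with coefficient $j(\lambda)\neq 0$, while the supports of all other summands lie strictly below it; this makes $\hwt_{\tilde{\Sigma}}L(\lambda)$ the unique $\tilde{>}$-maximum.

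For part (iii), the situation is immediate: $\tilde{\Sigma}=\Sigma$ and $\hwt_{\Sigma}L(\xi)=\xi+\rho$, so $(\star)$ follows at once from \Cor{corAA}, which gives $\mu<\lambda$ whenever $d'_{\lambda,\mu}\neq 0$ and $\mu\neq\lambda$.

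For (i) and (ii), any such $\mu$ is connected to $\lambda$ by a decreasing path $\mu\to\cdots\to\lambda$ in $D_{\fg}$, so by the structural analysis in~\ref{propertiedDosp},
$$\lambda-\mu\in\tfrac{1}{2}\mathbb{N}\Pi_0\quad\text{if }t(\lambda)\in\{1,2\},\qquad \lambda-\mu\in\tfrac{1}{2}\mathbb{N}\Pi_0+\mathbb{N}(\delta_n-\vareps_m)+\mathbb{N}(\delta_n+\vareps_m)\quad\text{if }t(\lambda)=0.$$
Since every positive even root lies in $\mathbb{N}\tilde{\Sigma}$ for any base compatible with $\Pi_0$, and since the hypothesis $\delta_n\pm\vareps_m\in\tilde{\Sigma}$ in~(ii) places the remaining odd generators inside $\mathbb{N}\tilde{\Sigma}$ as well, in both cases I get $\lambda-\mu\in\mathbb{N}\tilde{\Sigma}\setminus\{0\}$.

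The remaining step, and the main obstacle, is to transfer $\lambda-\mu\in\mathbb{N}\tilde{\Sigma}\setminus\{0\}$ to the shifted highest weights, i.e.\ to $(\star)$ itself. Fix a sequence of odd reflections $r_{\beta_1},\ldots,r_{\beta_k}$ taking $\Sigma$ to $\tilde{\Sigma}$; the recursion of~\ref{hwt} gives $\hwt_{\tilde{\Sigma}}L(\xi)=\xi+\rho+\sum_i\epsilon_i(\xi)\beta_i$ with $\epsilon_i(\xi)\in\{0,1\}$ recording the orthogonality of the running highest weight with $\beta_i$, so
$$\hwt_{\tilde{\Sigma}}L(\lambda)-\hwt_{\tilde{\Sigma}}L(\mu)=(\lambda-\mu)+\sum_i\bigl(\epsilon_i(\lambda)-\epsilon_i(\mu)\bigr)\beta_i.$$
I would attack $(\star)$ by induction along the reflection chain, aiming to prove $\epsilon_i(\lambda)=\epsilon_i(\mu)$ for every $i$, so that the correction vanishes: at step $i$, the condition $(\hwt_{\Sigma_{i-1}}L(\xi)\mid\beta_i)=0$ reduces, via the inductive equality of earlier $\epsilon_j$, to the orthogonality of $\xi+\rho$ with a root in the $W$-orbit of the iso-set $S_{\tail(\lambda)}$ governing the block of $\chi_\lambda=\chi_\mu$, and a shared central character forces this orthogonality condition to coincide for $\xi=\lambda$ and $\xi=\mu$. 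The case $t(\lambda)=0$ is the most delicate — which is precisely why $\delta_n\pm\vareps_m\in\tilde{\Sigma}$ is imposed in~(ii) — and executing this bookkeeping uniformly across the $\osp$-cases is the technical heart of the argument.
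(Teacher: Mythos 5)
Your reduction is sound as far as it goes: writing $\KW(\lambda)=j(\lambda)Re^{\rho}\cE_{\lambda}=j(\lambda)\sum_{\mu}d'_{\lambda,\mu}Re^{\rho}\ch L(\mu)$, noting that each summand, expanded in $\cR_{\tilde{\Sigma}}$ via $Re^{\rho}=R(\tilde{\Sigma})e^{\tilde{\rho}}$, has unique $\tilde{>}$-maximal term $e^{\hwt_{\tilde{\Sigma}}L(\mu)}$ with coefficient $1$, does reduce the corollary to your claim $(\star)$; and in the $\fgl$-case $(\star)$ follows from $d'_{\lambda,\mu}\neq 0\Rightarrow\mu\leq\lambda$ (\Cor{corAA}) together with $\hwt_{\Sigma}L(\mu)=\mu+\rho$, so part (iii) is complete and is essentially the paper's argument (the paper phrases it as an induction on $||\lambda||_{gr}$ through \Cor{corchKW}).

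For (i) and (ii), however, the proof is not finished, and the mechanism you propose for the missing step would fail. A smaller issue first: (\ref{uprise}) places $\lambda-\mu$ in $\tfrac{1}{2}\mathbb{N}\Pi_0$ (plus the extra odd cone when $t=0$), and half-multiples of simple even roots (e.g.\ $\delta_n=\tfrac{1}{2}(2\delta_n)$, or elements such as $\vareps_{m-1}+\delta_n$ in the $\osp(2m|2n)$-case) are not covered by the observation that positive even roots lie in $\mathbb{N}\tilde{\Sigma}$; one needs an extra argument using $\lambda-\mu\in\mathbb{Z}\Delta$ and the structure of $\Delta^+(\tilde{\Sigma})_1$. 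The serious gap is the step you yourself defer: the claim that $\epsilon_i(\lambda)=\epsilon_i(\mu)$ for all $i$ because $\chi_{\lambda}=\chi_{\mu}$ is false. Whether the running highest weight picks up $\beta_i$ under the odd reflection $r_{\beta_i}$ is the condition $(\hwt_{\Sigma_{i-1}}L(\xi)\,|\,\beta_i)=0$, and this depends on the atypical roots of the individual weight (the positions of its $\times$'s), not merely on the central character: already for the trivial module versus any other module in the same maximally atypical block, $\hwt_{\tilde{\Sigma}}L(0)=\tilde{\rho}$ for every base, while the highest weight of the other module does change along suitable chains of odd reflections, so the correction patterns differ. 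Consequently $\hwt_{\tilde{\Sigma}}L(\lambda)-\hwt_{\tilde{\Sigma}}L(\mu)$ is not $\lambda-\mu$ in general, and $(\star)$ in the $\osp$-case remains unproved; it is precisely here that the $t=0$ restriction $\delta_n\pm\vareps_m\in\tilde{\Sigma}$ and the half-integer cone must interact, and this is not ``bookkeeping'' one can expect to go through as sketched. The paper avoids tracking odd reflections altogether: it proves (i), (ii) by induction on $||\lambda||_{gr}$, combining \Cor{corchKW} (the expansion of $Re^{\rho}\ch L(\lambda)$ in the $\KW(\mu)$ with $d^{\lambda,\mu}_<\neq 0$) with (\ref{uprise}), so the top term is read off from $\supp(Re^{\rho}\ch L(\lambda))$ and the lower terms are controlled by the inductive hypothesis on the $\KW(\mu)$ rather than by an explicit comparison of $\rho$-shifted highest weights obtained from reflection chains.
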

\begin{proof}
The assertions (i), (ii) follow by induction on $||\lambda||_{gr}$ if we 
combine~\Cor{corchKW} 
with~(\ref{uprise}).
Similarly, (iii) follows by induction on $||\lambda||_{gr}$ from~\Cor{corchKW} 
and the fact that $d^{\lambda,\mu}_{<}\not=0$ implies $\mu<\lambda$.
\end{proof}

\section{Proof of the formulas~(\ref{EL})}\label{sec:character}

The proof of~(\ref{EL}) follows the plan explained in~\cite{Gdex},  Section 3.
In~\ref{markedgraphs}, \ref{graphs} below we recall the main constructions of~\cite{Gdex}.

\subsection{Marked graphs}\label{markedgraphs}
Consider a directed graph $(V,E)$ where $V$ and $E$ are at most countable, where
the set of edges $E$ is equipped by two functions:
$b: E\to\mathbb{Z}$ and a function  $\kappa$ from $E$ to a commutative ring.

We say that $\iota: V\to \mathbb{N}$  (resp., $\iota: V\to \mathbb{Z}$)
defines a $\mathbb{N}$-{\em grading} (resp., $\mathbb{Z}$-grading) 
on this graph if
for each edge $\nu\overset{e}{\longrightarrow} \lambda$ 
one has $\iota(\nu)<\iota(\lambda)$.

\subsubsection{}\label{decrincr}
For a path
$$P:=\nu_1\overset{e_1}{\longrightarrow }\nu_2\overset{e_2}{\longrightarrow }\nu_3 \ldots
\overset{e_s}{\longrightarrow }\nu_{s+1}$$
we define
$$length(P):=s,\ \ \kappa(P):=\prod_{i=1}^s \kappa(e_i).$$

We call the path $P$
 {\em decreasing} (resp., {\em increasing}) if $b(e_1)>b(e_2)>\ldots >b(e_s)$
(resp., $b(e_1)<\ldots <b(e_s)$). 
We consider a path $P=\nu$ (with one vertex and zero edges) as a decreasing/increasing path  of zero length with  $\kappa(P)=1$.

\subsubsection{}\label{decreq}
\begin{defn}{}
We call two functions $b,b': E\to\mathbb{Z}$ 
{\em decreasingly-equivalent} if
for each path
$\nu_1\overset{e_1}{\longrightarrow }\nu_2\overset{e_2}{\longrightarrow }\nu_3$ one has
$$b(e_1)>b(e_2)\ \Longleftrightarrow\ b'(e_1)>b'(e_2).$$
\end{defn}

 \subsubsection{}
Observe that two decreasingly-equivalent graphs have 
the same  set of decreasing paths.

\subsubsection{}\label{Finass}
We denote the set of decreasing (resp., increasing) paths
from $\nu$ to $\lambda$ by $\cP^{>}(\nu,\lambda)$ (resp., $\cP^{<}(\nu,\lambda)$).

Let $(V,E)$ be a $\mathbb{Z}$-graded  graph with a finite number of edges
 between any two vertices.
Notice that in this case the number of paths between any two vertices is finite.

We introduce the square matrices $A^{<}(\kappa)=(a^{<}_{\lambda,\nu})_{\lambda,\nu\in V}\ $
and  $A^{>}(\kappa)=(a^{>}_{\lambda,\nu})_{\lambda,\nu\in V}\ $ by
$$a^{>}_{\lambda,\nu}:=\sum_{P\in \cP^{>}(\nu,\lambda)} \kappa(P),\ \ \ 
a^{<}_{\lambda,\nu}:=\sum_{P\in \cP^{<}(\nu,\lambda)}(-1)^{length(P)}\kappa(P).$$
Since  the graph is a $\mathbb{Z}$-graded, these matrices
are lower-triangular with  $a^{>}_{\lambda,\lambda}=a^{<}_{\lambda,\lambda}=1$. The following lemma is proven in~\cite{Gdex}, Section 3.4
(the proof is similar to one in~\cite{GS}, Thm. 4).

\subsubsection{}
\begin{lem}{lemAAA}
Let $(V,E)$ be a $\mathbb{Z}$-graded  graph with a finite number of edges
 between any two vertices. Assume that $b: E\to \mathbb{Z}$ satisfies the property

(BB) $ \ \ \ $
{\em for each path
$\ \nu_1\overset{e_1}{\longrightarrow }\nu_2\overset{e_2}{\longrightarrow }\nu_3\ $ one has $\ \ \ b(e_1)\not=b(e_2)$.} 

Then $A^{>}(\kappa)\cdot A^{<}(\kappa)=A^{<}(\kappa)\cdot A^{>}(\kappa)=\Id$.
\end{lem}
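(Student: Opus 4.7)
The plan is to prove $A^{>}\cdot A^{<}=\Id$ by a sign-reversing cancellation; the identity $A^{<}\cdot A^{>}=\Id$ will follow symmetrically. First I will expand the $(\lambda,\nu)$-entry of the product as a weighted sum over triples $(\mu,P_2,P_1)$ with $P_2\in\cP^{<}(\nu,\mu)$ and $P_1\in\cP^{>}(\mu,\lambda)$, with weight $(-1)^{\mathrm{length}(P_2)}\kappa(P_2)\kappa(P_1)$. Concatenating $P_2$ with $P_1$ identifies each such triple with an arbitrary path $\pi\colon \nu\to\lambda$ together with a \emph{split point} $k\in\{0,1,\ldots,\mathrm{length}(\pi)\}$, yielding
\[
(A^{>}A^{<})_{\lambda,\nu}=\sum_{\pi\colon\nu\to\lambda}\kappa(\pi)\sum_{k \text{ admissible}}(-1)^{k},
\]
where $k$ is \emph{admissible for} $\pi=e_1\cdots e_s$ iff the prefix $e_1,\ldots,e_k$ is $b$-increasing and the suffix $e_{k+1},\ldots,e_s$ is $b$-decreasing.

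Fix such $\pi$ with $s\ge 1$. By (BB), for each $i<s$ exactly one of $b(e_i)<b(e_{i+1})$ (``up'') or $b(e_i)>b(e_{i+1})$ (``down'') holds. Admissibility translates to: positions $1,\ldots,k-1$ are all up and positions $k+1,\ldots,s-1$ are all down, while the step at position $k$ is unconstrained. I would then check two cases. \emph{Case A:} if the up/down pattern has the form $U^{a}D^{s-1-a}$ for some $a$, the admissible splits are exactly $k=a$ and $k=a+1$, whose signs $(-1)^{a}+(-1)^{a+1}$ cancel. \emph{Case B:} otherwise the pattern contains a down at some position $i$ followed by an up at some position $j>i$; any admissible $k$ would then have to satisfy $k\ge j$ (to exclude position $j$ from the prefix) and $k\le i$ (to exclude position $i$ from the suffix), which is impossible.

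Hence every $\pi$ of positive length contributes zero, and only the trivial path $\pi=\nu$ (which exists iff $\nu=\lambda$, and contributes $+1$) survives, giving $(A^{>}A^{<})_{\lambda,\nu}=\delta_{\lambda,\nu}$. The identity $A^{<}\cdot A^{>}=\Id$ is proved verbatim after swapping the roles of $P_1$ and $P_2$ (and of ``up'' and ``down''); the admissible signs become $(-1)^{s-a}+(-1)^{s-a-1}=0$ and still cancel. Well-definedness of the matrix product is automatic, since the $\mathbb{Z}$-grading bounds the length of any path $\nu\to\lambda$ by $\iota(\lambda)-\iota(\nu)$ and there are finitely many edges between any two vertices. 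The only step requiring care is the case analysis above, and condition (BB) is exactly what prevents ``flat'' steps $b(e_i)=b(e_{i+1})$ that would otherwise generate extra admissible splits and break the cancellation.
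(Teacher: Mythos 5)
Your proof is correct and follows essentially the same route as the argument the paper cites for this lemma (from [Gdex, Section 3.4], in the style of [GS, Thm.~4]): expand each entry of the product as a sum over concatenated paths with a marked split point, observe that (BB) forbids flat steps, and cancel the exactly two admissible splits of every path of positive length in sign-reversing pairs, leaving only the trivial paths on the diagonal. The only blemish is cosmetic: in your Case B the parenthetical justifications are interchanged ($k\ge j$ is what excludes position $j$ from the \emph{suffix}, and $k\le i$ is what excludes position $i$ from the \emph{prefix}), but the inequalities themselves, and hence the contradiction with $i<j$, are correct.
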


\subsection{Graphs $\hat{\Gamma}^{\chi}_{st}$ and $\Gamma^{\chi}_{st}$}\label{graphs}
We take $\fg:=\fgl(m|n)$ and fix a central character $\chi$. We
 define  $\hat{\Gamma}^{\chi}_{st}$
and its subgraph ${\Gamma}^{\chi}_{st}$ similarly to~\cite{Gdex}.

\subsubsection{Graph $\hat{\Gamma}^{\chi}_{st}$}\label{tildeGammachi}
Let $\hat{\Gamma}^{\chi}_{st}(z)$ be  a graph with the set of vertices
$V:=\Lambda^{\chi}_{st}$ and the following edges: if $K^{\lambda,\nu}_{\fp^{(i)},\fp^{(i-1)}}\not=\delta_{\nu,\lambda}$ (where $\delta_{\nu,\lambda}$ is the Kronecker symbol)
we join  $\nu,\lambda$ by 
the edge of  the form
$$\nu\overset{e}{\longrightarrow}\lambda\ \text{ with }\ \ b(e)=i.$$
The graph ${\Gamma}^{\chi}_{st}$ is obtained
from $\hat{\Gamma}^{\chi}_{st}$ by removing the edges of the form
$\nu\overset{e}{\longrightarrow} \lambda$
with $b(e)\leq \tail(\lambda)$. For the core-free case
$\Lambda^{\chi}_{st}=\Lambda^{\chi}$ and we denote the resulting graphs by
$\hat{\Gamma}^{\chi}$ and ${\Gamma}^{\chi}$ respectively.

 We denote by $P^{>}(\nu,\lambda)$ 
the set of decreasing paths from 
$\nu$ to $\lambda$ in the graph $\Gamma^{\chi}_{st}$.

By~\ref{identific-graph}  if $\nu\overset{e}{\longrightarrow}\lambda$
is an edge in $\hat{\Gamma}^{\chi}_{st}$ (resp., in $\Gamma^{\chi}_{st}$) then    $\lambda$ is obtained from $\nu$ by a  move (a non-tail move) 
of weight $s$
and for $i:=b(\lambda;\mu)=b(e)$ one has
 $K^{\lambda,\nu}_{\fp^{(i)},\fp^{(i-1)}}=z^s$.
In particular, $\hat{\Gamma}^{\chi}_{st}$
does not have multi-edges and is $\mathbb{Z}$-graded with respect to 
$||\lambda||$. 

\subsubsection{}
 Take $\nu,\lambda\in\Lambda^{\chi}$ 
with $\nu\not=\lambda$. By~\Lem{lemAA} (i)
\begin{equation}\label{pyatka}
K^{\lambda,\nu}_{\fp^{(i)},\fp^{(i-1)}}(-1)=(-1)^{||\lambda||-||\nu||+1}
\end{equation}
if $\hat{\Gamma}^{\chi}_{st}$ contains an edge  $\nu\overset{e}{\longrightarrow}\lambda$  with $b(e)=i$ and $K^{\lambda,\nu}_{\fp^{(i)},\fp^{(i-1)}}=0$ otherwise.
By~\Lem{lemAA} (ii) the graph ${\Gamma}^{\chi}_{st}$
is $\mathbb{N}$-graded with respect to 
$||\lambda||_{gr}$ and satisfies the following condition

(Tail) $\ \ \ \ \ $
for each edge $\nu \overset{e}{\longrightarrow }\lambda$ in  $\Gamma^{\chi}_{st}$
one has $\tail(\nu)\leq b(e)$.

This condition implies the following important property:
 a decreasing path $P$ in $\hat{\Gamma}^{\chi}_{st}$
lies in $\Gamma^{\chi}_{st}$ if and only if the last edge of $P$
lies in this graph.
Using this property,~(\ref{iterative}) and~(\ref{pyatka}) we obtain 
for $\lambda\in \Lambda^{\chi}_{st}$ and $\mu\in\Lambda^+_{m|n}$ 
\begin{equation}\label{formulaforK1}
\begin{array}{l}
K^{\lambda,\nu}_{\fg,\fp_{\lambda}}(-1)=
(-1)^{||\lambda||-||\mu||}\displaystyle\sum_{P\in {P}^{>}(\nu,\lambda)}(-1)^{length(P)}.\end{array}\end{equation}
(see Cor. 3.6 in~\cite{Gdex} for details). 
For $K^{\lambda,\nu}_{\fg,\fb}(-1)$ one has the similar formula in terms of the decreasing paths
in $\hat{\Gamma}^{\chi}_{st}$.

\subsubsection{}
Let $E$ be the set of edges in $\Gamma^{\chi}_{st}$.
We introduce $b': E\to \mathbb{Z}$ by
$$b'(\mu \overset{e}{\longrightarrow}\lambda):=b'(\nu,\lambda).$$
One readily sees that $b$ and $b'$ are decreasingly equivalent.
We denote by $\cP^{>}_{\nu,\lambda}$ the number of
paths from $\nu$ to $\lambda$ in $\Gamma^{\chi}_{st}$
which are increasing with respect to $b'$.

Moreover, $b'$ satisfies the property (BB).
 Using~\Lem{lemAAA} 
we conclude that for  a stable weight $\lambda$ one has
$$\ch L(\lambda)=\sum_{\mu\in\Lambda^+_{m|n}} (-1)^{||\lambda||-||\mu||} d^{\lambda,\mu}_<\mathcal{E}_{\mu},$$
where
$d^{\lambda,\mu}_<$ is the cardinality of $\cP^{>}_{\nu,\lambda}$.

\subsubsection{}
Notice that $\Gamma^{\chi}_{st}$ coincides with 
 the ``stable part''   (the full subgraph corresponding to the stable vertices) of  the component $D^{\chi}_{\fg}$ and  that $b'$
corresponds to the marking in this graph. This completes
the proof of~(\ref{EL}).

\subsection{Examples}
Consider the core-free case: $\fg=\fgl(r|r)$ and  $\chi_{\lambda}=\chi_0$.

\subsubsection{Case $r=1$}
In this case  $\mathcal{E}_{\lambda}=\ch L(\lambda)=e^{\lambda}$.

\subsubsection{Case $r=2$}
Set $\beta_1:=\vareps_1-\delta_2,\ \beta_2:=\vareps_2-\delta_1$.
 
The  weights $\lambda\in\Lambda^+_{2|2}$ with $\chi_{\lambda}=\chi_0$ 
are of  the form $s(\beta_1+\beta_2)+i\beta_1$ 
for $s\in\mathbb{Z}$, $i\in\mathbb{Z}_{\geq 0}$; we denote such weight 
by $(s;i)$. 
The diagram of $(s;i)$ has symbols $\times$ at the positions $s$ and $s+i+1$.
One has $||(s;i)||_{gr}=i$ and $\tail(s;i)=1+\delta_{i0}$.

The graph $D_{\fg}$ is described in \ref{exaDg}.
The decreasing paths are the paths of length at most $1$; combining~(\ref{Euler}) and~(\ref{formulaforK1}) we obtain
$$\mathcal{E}_{s;i}=\left\{ \begin{array}{ll}
\ch L(s;0) & \text{ if } i=0,\\
\ch L(s;1)+\ch L(s;0)+\ch L(s-1;0)& \text{ if } i=1,\\
\ch L(s;i)+\ch L(s;i-1) & \text{ if } i>1.
\end{array}\right.$$
For $j>0$  a vertex $(s;j)$ can be reached by increasing paths from vertices $(s;i)$
 for $0\leq i\leq j$ and from a vertex $(s-1;0)$; in both cases  the path
is unique; this gives
$$\ch L(s;j)=(-1)^j\mathcal{E}_{s-1;0}+(-1)^{j-i}\displaystyle\sum_{i=0}^j \mathcal{E}_{s;i},\ \ \ \sch L(s;j)=\mathcal{E}^-_{s-1;0}+\displaystyle\sum_{i=0}^j \mathcal{E}^-_{s;i}.$$

\subsection{Comparison with other character formulas}\label{other-formulas}
For the $\fgl(2|2)$-case a weight $(s,i)$ is a Kostant weight only if $i=0$; thus
the Kac-Wakimoto character formula does not hold for $L(s;i)$ with $i\not=0$.
By~\cite{Drouot} the restriction of any $L(s,i)$ for $i > 0$ is a sum of four simple $\fgl_0$-modules,so  the character of $L(s,i)$ is a sum over four Weyl character formula terms for $\fgl_0$.
Any $\fgl(2|2)$-module is always {\em partially disconnected} (PDC) in the sense of \cite{CHR2}. For PDC weights the authors establish the following character formula

\[
e^{\rho}R\cdot\mbox{ch }L\left(\lambda\right)=\frac{(-1)^{|(\lambda^{\rho})^{\Uparrow}-\lambda^{\rho}|_{S_{\lambda}}}}{t_{\lambda}}\jJ_W\left(\frac{e^{\left(\lambda^{\rho}\right)^{\Uparrow}}}{\prod_{\beta\in S_{\lambda}}\left(1+e^{-\beta}\right)}\right),
\]

where we refer to loc. cit for the definitions. The number $t_{\lambda}$ is two for $(s,0)$ and one for $(s,i), \ i > 0$. However already for $\fgl(3|3)$ there are simple modules which are not PDC.

The Su-Zhang formula \cite{SuZh1} 
expresses the character in terms of  $KW(\lambda,S)$ where $S$ is chosen to be maximal  (the cardinality of $S$ is equal to the atypicality of $L$) whereas we take $S$  
with cardinality equal to $\tail(L)$.

\section{Euler characters for $\fgl(m|n)$}\label{euler-gl}
In this section we define $\tail(\lambda)$ and the $\lambda^{\dagger}$ which appeared in (iv) in~\ref{overview} for the $\fgl(m|n)$-case. 
In addition, in~\Cor{corEla} we deduce the property 
(i) of Section~\ref{overview}.

Recall that in this case $\tail(\lambda)$ was introduced
only for stable weights;   the stable weight diagram
has a ``horizontal tail'', that is the leftmost part of the diagram is of the form $\underbrace{\times\ldots\times}_{\tail(\lambda)\text{ times}}$.
In~\ref{dagger} below we introduce $\tail(\nu)$
for non-stable weights and assign to each weight $\nu\in\Lambda^+_{m|n}$ 
a weight diagram $f^{\dagger}$ with a ``vertical tail'' of size $\tail(\nu)$;  the weight  $\nu^{\dagger}\in\Lambda^{\geq}_{m|n}$ is defined by 
$\diag(\nu^{\dagger}-\rho)=f^{\dagger}$.  The set
$\Lambda^{\dagger}_{m|n}:=\{\lambda^{\dagger}|\ \lambda\in\Lambda^+_{m|n}\}$
 consists of the diagrams where
at most one position contains more than one of the symbols $\circ,>,<,\times$
and, if such a position exists, it contains $\times^i$ for $i>1$ with no
 symbols $\times$ which preceed this position.
The correspondence $f\mapsto f^{\dagger}$ gives a bijection between
$\Lambda^+_{m|n}$ and $\Lambda^{\dagger}_{m|n}$, see~\ref{tail-up} for details and examples.

In this section $\fg:=\fgl(m|n)$.

\subsection{Straightening the tail}\label{tail-up}
Take  $\nu\in {\Lambda}^{\geq}_{m|n}$ and set $f:=\diag(\nu)$.

\subsubsection{}\label{dagger}
Recall that the atypicality
of $\nu$ is equal to the number of the symbols $\times$ in $f$.
We assign to $f$ $\tail(f)$ 
and the position $y_0(f)$ as follows.

If $\nu$ has atypicality zero ($f$ does not contain $\times$) we set
$\tail(f)=0$ and $y_0(f)=\infty$.

If $\nu$ is atypical we let
 $\tail(f):=s$, where $s$ is  the maximal number such that
$f$ does not have empty symbols between the first and the $s$th symbols $\times$;
we denote by $y_0(f)$ the coordinate of $s$th symbol $\times$ in $f$. 
We denote  by $f^{\dagger}$ the diagram obtained from $f$ by the following procedure:
all symbols $\times$ with the coordinates
less than $y_0(f)$ are moved to the position $y_0(f)$ and other symbols remain
the same.

For example, 
$$\begin{array}{llll}
f_1=\ 0>\circ\times>\times\times\circ\times< \  & \tail(f_1)=3  &\ y_0(f_1)=6 \ 
& f_1^{\dagger}=\ 0>\circ\circ>\circ\times^3\circ\times<\\
\\
f_2=\ 0\times^3>\times^2<\circ\times^2 & \tail(f_2)=5 &  y_0(f_2)=3 & 
 f_2^{\dagger}=\ 0\circ>\times^5<\circ\times^2.
 \end{array}$$

Thus $f^{\dagger}$ contains $\tail(f)$ symbols $\times$ at the position $y_0$
and no symbols $\times$ with smaller coordinates.  Notice that
$$(f^{\dagger})^{\dagger}=f^{\dagger},\ \ \ \ \tail(f^{\dagger})=
\tail(f),\ \ \ \ y_0(f^{\dagger})=y_0(f).$$

Let $\lambda^{\dagger}\in\Lambda^{\geq}_{m|n}$ be the weight corresponding to $f^{\dagger}$, i.e. $\diag(\lambda^{\dagger}-\rho)=f^{\dagger}$.

For instance, for $f_1$ as above we have 
$$\begin{array}{l}
\nu+\rho=\vareps_6+3(\vareps_5-\delta_1)+
4\vareps_4+5(\vareps_3-\delta_2)+6(\vareps_2-\delta_3)+ 8(\vareps_1-\delta_4)-9\delta_5\\
\nu^{\dagger}=\vareps_6+
4\vareps_5+6(\varesp_4+\vareps_3+\vareps_2-\delta_1-\delta_2-\delta_3)
+ 8(\vareps_1-\delta_4)-9\delta_5.
\end{array}$$

\subsubsection{}\label{y0f}
Set $s:=\tail(f)$. Recall that $f$ contains at most one symbol $\times$ in each position.
Each symbol $\times$ in $f$ corresponds to an isotropic root in $\Delta^+$:
for instance, for $f_1$ as above the rightmost $\times$ corresponds
to $\vareps_1-\delta_4$ and the leftmost $\times$ corresponds
to $\vareps_5-\delta_1$. Let $\vareps_p-\delta_q$ be the root corresponding to $\times$ at the position $y_0(f)$ (for $f_1$ in the above example
$y_0(f_1)=6$ and 
$\vareps_p-\delta_q=\vareps_2-\delta_3$). One has
$$0=(\nu|\vareps_p-\delta_q)=(\nu^{\dagger}|\vareps_p-\delta_q).$$
Moreover, $(\nu^{\dagger}|\vareps_i)=(\nu^{\dagger}|\delta_j)$ for
$i=p,p+1,\ldots,p'$ and  $j=q',q'+1,\ldots, q$ with
$p'-p=q-q'=s-1$. (In the above example,
$(\nu^{\dagger}|\vareps_i)=(\nu^{\dagger}|\delta_j)=6$
for $i=2,3,4$ and $j=1,2,3$).
We set
$$S_{\nu^{\dagger}}:=\{\vareps_{p+i}-\delta_{q'+i}\}_{i=0}^{s-1}.$$
By above, $(\nu^{\dagger}|S_{\nu^{\dagger}})=0$. (In the above example,
$S_{\nu^{\dagger}}=\{\vareps_2-\delta_1,
\vareps_3-\delta_2, \vareps_4-\delta_3\}$). For $\nu\in\Lambda^+_{m|n}$ we set
$$\KW(\nu):=\KW(\nu^{\dagger},S_{\nu^{\dagger}}).$$

\subsubsection{Remark}
Clearly, $S_{\lambda^{\dagger}}$ can be embedded to
a base compatible with $\Pi_0$:
$$S_{\lambda^{\dagger}}\subset \delta^{q'-1}\vareps^{p-1}(\vareps\delta)^s\delta^{n-s-q'+1}\vareps^{m-s-p+1}.$$ 
Using \ref{hwt} one can show that for $\lambda\in\Lambda^+_{m|n}$
the weight $\lambda^{\dagger}$ is the $\rho$-twisted highest
weight of $L:=L(\lambda)$ with respect to this base, which we denote by 
$\Sigma_L$.
If $\lambda$ is stable, then $\Sigma_L$ is introduced in~\Prop{propE}, see~\ref{exa11} below.

\subsubsection{Remark}
In the $\osp$-case the weight $\lambda^{\dagger}=\lambda+\rho$
has a "vertical tail" ($\diag(\lambda)$  has $\times^s$ in the zero position); in the $\fgl$-case $\diag(\lambda)$ has a "horizontal tail"
and the diagram of $\lambda^{\dagger}$ has a "vertical tail" of the same size.

\subsubsection{Example: stable weight $\lambda$}\label{exa11}
Let $\lambda\in\Lambda^+_{m|n}$ be a stable weight.
In this case the weight diagram of $\lambda^{\dagger}$ starts from
$\times^s$ and 
${S}_{\lambda^{\dagger}}=\{\vareps_{m-s+i}-\delta_i\}_{i=1}^s$.

Set $L:=L(\lambda)$. As in~\Prop{propE} we
 denote by $\Sigma_L$ the base corresponding to the word
$\vareps^{m-s}(\vareps\delta)^{s}
\delta^{n-s}$ and by $\rho_L$ the Weyl vector corresponding to $\Sigma_L$.
 Notice that ${S}_{\lambda^{\dagger}}\subset \Sigma_L$.
Observe $\Sigma_L$ is obtained from $\Sigma$ by  odd reflections with respect to the roots of $\fg_{\lambda}$; these reflections do not change the highest weight of $L$, so the highest weight of $L$ with respect to $\Sigma_L$
is $\lambda$. It is easy to see that 
$$\lambda^{\dagger}=\lambda+\rho_L.$$
Note that $(\lambda^{\dagger}| S_s)=0$.
Using~\Prop{propE} we get
$$s!Re^{\rho}\cE_{\lambda}=\KW(\lambda)=(-1)^{[\frac{s}{2}]}\KW(\lambda^{\dagger}, S_s).$$

\subsection{Remark}
Consider the set of  $\rho$-twisted highest
weights of $L:=L(\lambda)$: 
$$Hwt(L):=\{\hwt_{\Sigma'} L|\ 
\text{ where }\Sigma' \text{ is a base of }\Delta\}.$$

For each $\nu\in \Lambda_{m|n}$ let $s(\nu)$ be the maximal cardinality
of an iso-set orthogonal to $\nu$ which can be embedded
to a base compatible with $\Pi_0$. The number $s(\nu)$ can be visualized 
if we draw the weight diagram of $\nu$ using the same rules
as in~\ref{wtdiag} (eventhough $\nu$ is not always in $\Lambda^{\geq}_{m|n}$):
 $s(\nu)$ is equal to the maximal number of symbols $\times$ which 
occupy the same position in the weight diagram. In particular, for $\nu\in\Lambda^{\dagger}$,
$s(\nu)$  is the size of the ``vertical tail'' of the diagram.

By above, $\lambda^{\dagger}\in Hwt(L)$. One has $s(\lambda^{\dagger})=
\tail(\lambda)$ 
(and $S_{\lambda^{\dagger}}$ is an iso-set of the maximal cardinality
which is orthogonal $\lambda^{\dagger}$).

\subsubsection{Conjecture}\label{conjA}
$\tail(\lambda)=\max_{\nu\in Hwt(L(\lambda))} s(\nu)$.

%
%

\subsubsection{Remark: the $\osp$-case}
The formula in~\ref{conjA} does not hold for the $\osp$-case:
if $\lambda$ is a singly atypical $\osp(2m|2n)$-weight satisfying
the Kac-Wakimoto conditions which is not a Kostant weight, then the right-hand side of the formula is $1$,
whereas the left-hand side is $0$. A natural question is whether the formula holds for other weights.

\subsection{Translation functors and $\KW(\lambda)$}\label{KWtrans}
Retain notation of~\ref{Taa+1}.

\begin{lem}{TransandKW}
Take $\lambda\in\Lambda_{m|n}$.
Let $a\in\mathbb{Z}$ be such that exactly one of the positions $a,a+1$
in $\core(\lambda)$ is empty.

\begin{enumerate}
\item
$(T_{a,a+1}(\lambda))^{\dagger}=T_{a,a+1}(\lambda^{\dagger})$;
\item For  $\chi'=T_{a,a+1}(\chi_{\lambda})$ one has
$\Theta^V_{\chi', \chi_{\lambda}}\bigl(\KW(\lambda)\bigr)
=\KW(T_{a,a+1}(\lambda))$.
\end{enumerate}
\end{lem}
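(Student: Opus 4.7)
My plan is to prove part~(i) as a combinatorial identity on weight diagrams, and then deduce part~(ii) from (i) together with the triangular expansion of~\Cor{corchKW}.

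For part~(i), I would proceed by a case analysis on the interaction of the two positions $a,a+1$ with the ``tail interval'' $[p,y_0]$ of $\diag(\lambda)$, where $p$ is the coordinate of the leftmost $\times$ and $y_0:=y_0(\diag(\lambda))$ is the coordinate of its $\tail(\lambda)$-th $\times$. Recall that the dagger rewrites every $\times$ strictly to the left of $y_0$ as a $\circ$ and places the collapsed cluster $\times^{\tail(\lambda)}$ at $y_0$, leaving every other symbol in place. Since $T_{a,a+1}$ transposes the symbols at $a,a+1$ under the hypothesis that exactly one of them is a core symbol, the non-core partner is either a $\circ$ or a $\times$. In the (core~$\leftrightarrow\circ$) case the multiset of $\times$-coordinates and the coordinate of the first $\circ$ following $p$ are both preserved by $T_{a,a+1}$, so $\tail$ and $y_0$ agree on the two sides and commutativity reduces to the tautology that swapping two non-$\times$ entries commutes with relocating $\times$-symbols. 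In the (core~$\leftrightarrow\times$) case the $\times$ moves by one unit; a direct inspection using the fact that the tail interval contains no $\circ$ shows that $\tail$ is unchanged, that $y_0$ is unchanged except in two boundary sub-cases where the swap involves either the first $\times$ of $\diag(\lambda)$ or the $\times$ sitting at $y_0$, and that in those boundary sub-cases $y_0$ shifts by exactly one unit in precisely the way needed for $(T_{a,a+1}(\lambda))^\dagger$ and $T_{a,a+1}(\lambda^\dagger)$ to coincide.

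For part~(ii), apply the ring homomorphism $\Theta^V_{\chi',\chi_\lambda}$ to both sides of~\Cor{corchKW},
\[
Re^\rho\,\ch L(\lambda)=\sum_{\mu\in\Lambda^+_{m|n}}(-1)^{||\lambda||-||\mu||}\,\frac{d^{\lambda,\mu}_<}{j(\mu)}\,\KW(\mu).
\]
Since $\Theta^V$ is induced by the translation equivalence $T^V_{\chi_\lambda,\chi'}$ which sends $L(\lambda)$ to $L(T_{a,a+1}(\lambda))$, the left-hand side becomes $Re^\rho\,\ch L(T_{a,a+1}(\lambda))$. On the right-hand side, the coefficients $d^{\lambda,\mu}_<$, $j(\mu)$, $||\lambda||$ and $||\mu||$ depend only on the pair of howls $(\howl(\lambda),\howl(\mu))$ and are therefore invariant under $T_{a,a+1}$ applied simultaneously to both $\lambda$ and $\mu$ via the bijection $\mu\leftrightarrow T_{a,a+1}(\mu)$ between $\Lambda^{\chi_\lambda}$ and $\Lambda^{\chi'}$. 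Comparing the resulting expansion with the direct application of~\Cor{corchKW} to $L(T_{a,a+1}(\lambda))$ (whose indexing set is $\{T_{a,a+1}(\mu)\}$), and invoking the linear independence of the Kac-Wakimoto terms within a single block (which follows from the triangularity stated in property~(iii) of~\ref{overview} together with \Prop{propE}), one matches coefficients term by term to obtain $\Theta^V_{\chi',\chi_\lambda}(\KW(\mu))=\KW(T_{a,a+1}(\mu))$ for every $\mu$ in the central-character orbit of $\lambda$; specialising to $\mu=\lambda$ gives (ii).

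The most delicate point is the boundary sub-case of part~(i) in which $T_{a,a+1}$ acts on the first $\times$ of $\diag(\lambda)$ or on the $\times$ sitting at $y_0$ itself: one must simultaneously verify that the shift of $y_0$ under the swap is matched by a corresponding shift of the collapsed cluster $\times^{\tail}$ in $\lambda^\dagger$, so that the two diagrams $(T_{a,a+1}(\lambda))^\dagger$ and $T_{a,a+1}(\lambda^\dagger)$ coincide position by position.
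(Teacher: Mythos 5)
Your part (i) is essentially the paper's own argument: a direct check on weight diagrams that the swap and the dagger commute, the only failing configuration being $a=y_0(f)$ with $f(a+1)=\circ$, which is excluded because then both positions $a,a+1$ of $\core(f)$ are empty. One small inaccuracy in your case analysis: when the swap moves the \emph{first} $\times$ and $\tail(\lambda)\geq 2$, the coordinate $y_0$ does \emph{not} shift (it shifts only when the $\times$ sitting at $y_0$ itself is moved); the two daggers still coincide in that sub-case, so this is harmless. The serious problem is part (ii): your argument is circular within the paper's logical structure. You apply $\Theta^V_{\chi',\chi_\lambda}$ to the expansion of \Cor{corchKW} and compare with \Cor{corchKW} applied to $L(T_{a,a+1}(\lambda))$. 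But in the $\fgl$-case \Cor{corchKW} is obtained by combining~(\ref{EL1}), \Lem{lemEla} and \Cor{corEla}(ii), and \Cor{corEla} --- in particular the identity $\tail(\mu)!\,Re^{\rho}\cE_{\mu}=\KW(\mu)$ for \emph{non-stable} $\mu$ --- is itself deduced from the present lemma. For a stable $\lambda$ the expansion of $\ch L(\lambda)$ involves only stable $\mu$'s and is available from Section~\ref{sec:character} together with \Prop{propE}; but $T_{a,a+1}(\lambda)$ and the weights $T_{a,a+1}(\mu)$ are in general non-stable, and the expansion of $Re^{\rho}\ch L(T_{a,a+1}(\lambda))$ in the terms $\KW(T_{a,a+1}(\mu))=\KW\bigl((T_{a,a+1}\mu)^{\dagger},S_{(T_{a,a+1}\mu)^{\dagger}}\bigr)$ is precisely what this lemma is needed to establish. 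So the proposal assumes what it sets out to prove.

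There is also a secondary gap: even if \Cor{corchKW} were available in both blocks, comparing the two expansions and invoking linear independence of the Kac-Wakimoto terms does not by itself yield $\Theta^V(\KW(\mu))=\KW(T_{a,a+1}\mu)$ term by term, because the left-hand quantities $\Theta^V(\KW(\mu))$ are unknown elements, not a priori linear combinations of the $\KW(T_{a,a+1}\nu)$ with known coefficients; one would instead have to run an induction on $||\cdot||_{gr}$, using the triangularity so that the unknown $\Theta^V(\KW(\lambda))$ enters with coefficient $1/j(\lambda)$ while all lower terms are covered by the induction hypothesis. The paper avoids both issues by proving (ii) directly and independently of any character formula: it reduces to $\lambda=\lambda^{\dagger}$ via (i), writes $\Theta^V_{\chi',\chi}(\KW(\lambda))=P_{\chi'}\bigl(\ch V\cdot\KW(\lambda)\bigr)$, expands $\ch V=\sum_i e^{\vareps_i}+\sum_j e^{\delta_j}$, kills the spurious terms with~(\ref{PchiKW}) and \Lem{lemprlambdaS}, and handles the case where the swap hits the vertical tail by a $\fgl(s+1|s)$ denominator identity. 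That independence is the whole point, since \Cor{corEla} and then the non-stable case of \Cor{corchKW} are afterwards deduced from this lemma. To repair your proof you would have to replace the appeal to \Cor{corchKW} by a computation of this kind (or some other argument not relying on the Gruson--Serganova expansion for non-stable weights).
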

\begin{proof}
For (i) note that 
$(T_{a,a+1}(f))^{\dagger}=T_{a,a+1}(f^{\dagger})$  except  for the case $a=y_0(f)$ and $f(a+1)=\circ$; in the latter case the positions $a,a+1$ 
in  $\core(f)$ are empty.
Combining (i) and the formula $(\lambda^{\dagger})^{\dagger}=(\lambda^{\dagger})$ 
we reduce (ii) to the case when $\lambda=\lambda^{\dagger}$ (i.e.,
$\lambda\in\Lambda^{\dagger}$). We set $S:=S_{\lambda}$.
We will consider the case when 
$$\core(\lambda)(a)=>,\ \ \ \ \ \core(\lambda)(a+1)=\circ$$
(other cases are similar). In this case  
$V=V_{st}$.

Using the formula $P_{\chi}(\KW(\lambda))=\KW(\lambda)$ and 
 $W$-invariance of $\ch V$  we  get
$$\Theta^V_{\chi,\chi'}\bigl(\KW(\lambda)\bigr)=P_{\chi'}\bigl(\ch V\cdot \jJ_W\bigl(\frac{e^{\lambda}}{\prod_{\beta\in S}(1+e^{-\beta})}\bigr)\bigr)=
P_{\chi'}\bigl(\jJ_W\bigl(\frac{e^{\lambda}\ch V}{\prod_{\beta\in S}(1+e^{-\beta})}\bigr)\bigr)$$
which allows to rewrite (ii) in the following form
\begin{equation}\label{aaa}
P_{\chi'}\bigl(\jJ_W\bigl(\frac{e^{\lambda}\ch V}{\prod_{\beta\in S}(1+e^{-\beta})}\bigr)\bigr)
=\KW(T_{a,a+1}(\lambda)).
\end{equation}

Recall that 
$S=\{\vareps_{p+i}-\delta_{q+i}\}_{i=1}^s$ 
for $s:=\tail(\lambda)$ and some $p,q$. Set
$$A:=\{\gamma\in \{\vareps_i\}_{i=1}^m\cup\{\delta_j\}_{j=1}^n|\ (\gamma,S)=0\}$$
and
$S_i:=S\setminus\{\vareps_{p+i}-\delta_{q+i}\}\ \text{ for }i=1,\ldots,s$.
Using $\ch V=\sum_{i=1}^m e^{\vareps_i}+\sum_{j=1}^n e^{\delta_j}$
we get
$$\begin{array}{l}
\jJ_W\bigl(\frac{e^{\lambda}\ch V}{\prod_{\beta\in S}(1+e^{-\beta})}\bigr)=\displaystyle\sum_{\gamma\in A}\jJ_W\bigl(\frac{e^{\lambda+\gamma}}{\prod_{\beta\in S}(1+e^{-\beta})}\bigr) + \displaystyle\sum_{i=1}^s
\jJ_W\bigl(\frac{e^{\lambda+\vareps_{p+i}}}{\prod_{\beta\in S_i}(1+e^{-\beta})}\bigr) 
\\=
\displaystyle\sum_{\gamma\in A} \KW(\lambda+\gamma;S)+
\displaystyle\sum_{i=1}^s\KW(\lambda+\vareps_{p+i}; S_i).\end{array}$$

 Using~(\ref{PchiKW}) we obtain
$$P_{\chi'}\bigl(\jJ_W\bigl(\frac{e^{\lambda}\ch V}{\prod_{\beta\in S}(1+e^{-\beta})}\bigr)\bigr)=\displaystyle\sum_{\gamma\in B} \KW(\lambda+\gamma,S)+\displaystyle\sum_{\gamma\in B_0} \KW(\lambda+\gamma,S_i),$$
where $B:=\{\gamma\in A|\ \core(\lambda+\gamma)=
T_{a,a+1}(\core(\lambda))\}$ and
$$ B_0:=\{\vareps_{p+i}|\  i=1,\ldots, s\ \text{ s.t. } 
\ \core(\lambda+\vareps_i)=T_{a,a+1}(\core(\lambda))\}.$$

Denote by $f$ the weight diagram of $\lambda$. Recall that $f(a)=>$ and
$f(a+1)=\times^j$ for some $i$.
Since $\lambda\in\Lambda^{\dagger}$, $f$ has a ``vertical tail'' at the position $y:=y_0(f)$ (so $j=1$ if $a+1\not=y$).
Since $f(a)=>$   there exists a unique $k$  such that  
$(\lambda,\vareps_k)=a$. 
Note that $\core(\lambda+\vareps_i)=T_{a,a+1}(\core(\lambda))$  implies
$i=k$ and $\core(\lambda+\delta_i)=\core(\lambda)$ implies
$(\lambda,\delta_i)=-a-1$. Note that
$\vareps_k\in A$, so $B_0=\emptyset$. We get
$$
P_{\chi'}\bigl(\jJ_W\bigl(\frac{e^{\lambda}\ch V}{\prod_{\beta\in S}(1+e^{-\beta})}\bigr)\bigr)=\left\{\begin{array}{l}
\KW(\lambda+\vareps_k,S)\ \text{ if }a+1=y\\
 \KW(\lambda+\vareps_k,S)
+\!\!\displaystyle\sum_{i: (\lambda,\delta_i)=-a-1} \KW(\lambda+\delta_i,S)\text{ otherwise}
\end{array}\right.
$$

If $f(a+1)=\circ$, then   $(\lambda,\delta_i)\not=-a-1$ for all $i$ and
$\lambda+\vareps_k=T_{a,a+1}(\lambda)$
with $S_{\lambda+\vareps_k}=S$; thus $\KW(\lambda+\vareps_k,S)=\KW(T_{a,a+1}(\lambda))$
and 
 this gives~(\ref{aaa}).

Consider the case  $f(a+1)=\times$ with $a+1\not=y$. 
By~\Lem{lemprlambdaS} (ii), $\KW(\lambda+\vareps_k,S)=0$ (since 
$(\lambda+\vareps_k,\vareps_{k-1}-\vareps_k)=(S,\vareps_{k-1}-\vareps_k)=0$). 
Since $a+1\not=y$, 
there is a unique $i$ such that  $(\lambda,\delta_i)=-a-1$.
One has $\lambda+\delta_i=T_{a,a+1}(\lambda)$ and $S_{\lambda+\delta_i}=
S$, so~(\ref{aaa}) holds.

In the remaining case $a+1=y$. Then  $f(a+1)=\times^s$ and
$k=p+s+1$. Note that $(\lambda+\vareps_k,\vareps_i)=a+1$ if and only if $i=p+1,\ldots,
p+s+1$ and $(\lambda+\vareps_k,\delta_i)=-a-1$ if and only if $i=q+1,\ldots, q+s$. Set 
$$\mu:=(a+1)(\sum_{i=1}^{s+1} \vareps_{p+i}-\sum_{i=1}^s\delta_{q+i}).$$
Let $W_{\mu}\cong S_{s+1}\times S_s\subset W$ be the group 
of permutations of $\vareps_{p+1},\ldots,\vareps_{p+s+1}$ and of
$\delta_{q+1},\ldots,\delta_{q+s}$. Notice that 
$\lambda+\vareps_k$ is $W_{\mu}$-invariant. 
Choosing any set of representativies in $W/W_{\mu}$ we have
$$\KW(\lambda+\vareps_k,S)=
\jJ_W\bigl(\frac{e^{\lambda+\vareps_k}}{\prod_{\beta\in S}(1+e^{-\beta})}\bigr)=\jJ_{W/W_{\mu}}\bigl(e^{\lambda+\vareps_k} 
\jJ_{W_{\mu}}
(\frac{1}{\prod_{\beta\in S}(1+e^{-\beta})})\bigr).$$
Comparing the denominator identities for $\fgl(s+1|s)$ with respect to 
the bases $(\vareps\delta)^s\vareps$ and $\vareps(\vareps\delta)^s$ 
we get
$$\jJ_{W_{\mu}}
(\frac{1}{\prod_{\beta\in S}(1+e^{-\beta})})\bigr)=
\jJ_{W_{\mu}}
(\frac{e^{-\sum_{\beta\in S'}\beta}}{\prod_{\beta\in S'}(1+e^{-\beta})})\bigr),$$
where 
$S':=\{\vareps_{p+1+i}-\delta_{q+i}\}_{i=1}^s$. This gives
$$
\jJ_W\bigl(\frac{e^{\lambda+\vareps_k}}{\prod_{\beta\in S}(1+e^{-\beta})}\bigr)=\jJ_{W/W_{\mu}}\bigl(e^{\lambda+\vareps_k} 
\jJ_{W_{\mu}}
(\frac{e^{-\sum_{\beta\in S'}\beta}}{\prod_{\beta\in S'}(1+e^{-\beta})})\bigr)=
\jJ_W\bigl(\frac{e^{\lambda'}}{\prod_{\beta\in S'}(1+e^{-\beta})}\bigr),
$$
where $\lambda':=\lambda+\vareps_k-\sum_{\beta\in S'}\beta$.
One readily sees that
$\lambda'=T_{a,a+1}(\lambda)$ and $S'=S_{\lambda'}$. This completes the proof.
\end{proof}

\subsection{}
\begin{cor}{corEla}
\begin{enumerate}
\item
Let $T:\Fin^{\chi}\iso \Fin^{\chi'}$ be a composition
of the translation functors $T^{V}_{\chi,\chi'}$ 
which are equivalence of categories and let $\Theta_{\chi,\chi'}: \cR_{\Sigma}\to\cR_{\Sigma}$ be the corresponding composed map.
If $T(L(\lambda-\rho))=L(\lambda'-\rho)$, then
$\Theta_{\chi,\chi'}(\KW(\lambda))=\KW(\lambda')$.
\item
For each $\lambda\in\Lambda^+_{m|n}$ one has
$$\tail(\lambda)! Re^{\rho}\cE_{\lambda}=\KW(\lambda).$$
\end{enumerate}
\end{cor}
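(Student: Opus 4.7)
Part (i) is a formal consequence of Lemma \ref{TransandKW}(ii). Writing $T$ as a composition of elementary translation functors $T_{a_1,a_1+1}, \ldots, T_{a_k,a_k+1}$ (each an equivalence of categories), the associated $\Theta_{\chi,\chi'}$ decomposes correspondingly as a composition of $\Theta^V$-maps on $\cR_{\Sigma}$. Since Lemma \ref{TransandKW}(ii) propagates the $\KW$-expression through each elementary factor (sending $\KW(\nu)$ to $\KW(T_{a_i,a_i+1}(\nu))$), iterating gives $\Theta_{\chi,\chi'}(\KW(\lambda)) = \KW(\lambda')$.

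For part (ii), my plan is to reduce to the stable case, which is already covered by Proposition \ref{propE}(ii). Given an arbitrary $\lambda \in \Lambda^+_{m|n}$, I will construct a composition $T$ of equivalences $T_{a,a+1}$ from~\ref{Taa+1} that moves every core symbol $>,<$ in $\diag(\lambda)$ to the right of every symbol $\times$, producing a stable weight $\lambda^{st} := T(\lambda)$. An elementary swap $T_{a,a+1}$ is available whenever positions $a$ and $a+1$ contain exactly one core symbol (with the other occupied by $\circ$ or $\times$), so a standard bubble-sort procedure iteratively advances each core symbol past the $\times$-symbols to its right; the process terminates once all $\times$-symbols precede all core symbols. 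Let $\Theta$ denote the composite map on $\cR_{\Sigma}$ corresponding to $T$.

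Because each $T_{a,a+1}$ preserves $\howl$ (see~\ref{Taa+1}) and $\tail$ depends only on $\howl$ (compatible both with the formulation in~\ref{corewt} and with the extended definition in~\ref{dagger}), we obtain $s := \tail(\lambda) = \tail(\lambda^{st})$. Applying Proposition \ref{propE}(ii) to the stable weight $\lambda^{st}$ yields
\[
s!\, Re^{\rho}\cE_{\lambda^{st}} = \KW(\lambda^{st}).
\]
Iterating Lemma \ref{lemEla} gives $Re^{\rho}\cE_{\lambda} = \Theta(Re^{\rho}\cE_{\lambda^{st}})$ for the appropriate direction of $\Theta$, while part (i) applied to $\lambda^{st}$ provides $\Theta(\KW(\lambda^{st})) = \KW(\lambda)$ in the same direction. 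Applying $\Theta$ to the displayed equality then yields $s!\, Re^{\rho}\cE_{\lambda} = \KW(\lambda)$, as required.

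The substantive content lies entirely in Lemma \ref{TransandKW}(ii) and Proposition \ref{propE}(ii); once these are in hand, the main thing to check is that the reduction to stability can be executed by an appropriate chain of equivalences and that $\tail$ is preserved by each elementary step, both of which are straightforward combinatorial verifications on weight diagrams.
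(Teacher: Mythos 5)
Your argument is correct and coincides with the paper's intended (unwritten) proof of this corollary: part (i) by iterating Lemma \ref{TransandKW}(ii) along the chain of elementary equivalences, and part (ii) by reducing to the stable case through a bubble-sort chain of functors $T_{a,a+1}$ (which preserve $\howl$ and hence $\tail$) combined with Lemma \ref{lemEla}. The only small imprecision is citing Proposition \ref{propE}(ii) alone for the stable case: that proposition gives $(-1)^{[\frac{s}{2}]}s!\,Re^{\rho}\cE_{\lambda}=\KW(\lambda+\rho_L,S_s)$, and the passage to $s!\,Re^{\rho}\cE_{\lambda}=\KW(\lambda)=\KW(\lambda^{\dagger},S_{\lambda^{\dagger}})$ uses the relabelling of the iso-set, which is exactly what is recorded in~\ref{exa11}, so cite that instead.
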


\subsubsection{}\label{RemEla}
Denote by $K(\lambda)$ the Kac module of the highest weight $\lambda$.
Take $\lambda'$ as in~\Cor{corEla} (i). From~\cite{Sselecta}, Thm. 5.1
it follows that $T(K(\lambda))=K(\lambda')$, where $,\lambda'$ are as above.
This gives the following formula
\begin{equation}\label{TKacmodule}
\Theta_{\chi,\chi'}(Re^{\rho} K(\lambda-\rho))=K(\lambda'-\rho)
\end{equation}
which will be used later (this formula can be also proven as~\Lem{TransandKW} (ii)).

\section{Euler supercharacters and the Duflo-Serganova functor} \label{sec:DS}

Let $\Sch(\mathfrak{g})$ be the ring of supercharacters
of $\fg$.
Recall that $\DS_x$ induces for any $x$ a homomorphism $\Sch(\mathfrak{g}) \to \Sch(\mathfrak{g}_x)$ which depends only on the rank of $x$. We denote this homomorphism by $\ds_j$, where
$j$ is the rank of $x$. We always assume that $j>0$ 
($\ds_0=Id$). 

In this section $\fg$ stands for $\fgl(m|n)$ or $\osp(M|N)$.

\subsection{Euler supercharacters}\label{defpi}
Recall that $\pi$ is the involution of  $\mathbb{Z}[\Lambda_{m|n}]$
given by $\pi(e^{\lambda}):=(-1)^{p(\lambda)}e^{\lambda}$; we extend this involution to the ring of fractions of $\mathbb{Z}[\Lambda_{m|n}]$.
Recall that $\sch L(\lambda)=(-1)^{p(\lambda)}\pi(\ch L(\lambda))$.
For each $\nu\in\Lambda^+_{m|n}$ set
$$\cE_{\nu}^-:=(-1)^{p(\nu)}\pi(\cE_{\nu}).$$
Using the character formulas and~\ref{plambdanu} we obtain the following formulas.

\subsubsection{}
\begin{cor}{blambda}
For $\lambda\in\Lambda^+_{m|n}$ one has
$$\sch L(\lambda)=\sum_{\mu\in\Lambda^+_{m|n}}(-1)^{p(\lambda-\mu)+||\lambda||-||\mu||}
d^{\lambda,\mu}_< \mathcal{E}^-_{\mu},$$
where $d^{\lambda,\mu}_<$ is the number of increasing paths
from $\mu$ to $\lambda$ in $D_{\fg}$. 

If $\lambda$ is stable and $t\not=2$, then
$\sch L(\lambda)=\sum_{\mu\in\Lambda^+_{m|n}}
d^{\lambda,\mu}_< \mathcal{E}^-_{\mu}$.
\end{cor}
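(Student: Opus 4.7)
The plan is to deduce both formulas by applying the parity-twisted involution $(-1)^{p(\lambda)}\pi$ to the character formulas in terms of Euler characters that are already in hand: namely~(\ref{EL1}) in the $\fgl(m|n)$-case and the formula recalled in~\ref{Elambdaosp} in the $\osp$-case. In both cases this reads
$$\ch L(\lambda)=\sum_{\mu\in\Lambda^+_{m|n}} (-1)^{||\lambda||-||\mu||}\,d^{\lambda,\mu}_<\,\mathcal{E}_{\mu}.$$

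First I would compute: $\pi$ is a ring involution that fixes integer coefficients, and $\cE^-_\mu=(-1)^{p(\mu)}\pi(\cE_\mu)$, equivalently $\pi(\cE_\mu)=(-1)^{p(\mu)}\cE^-_\mu$. Multiplying both sides of the character formula by $(-1)^{p(\lambda)}$ and applying $\pi$ sends the left-hand side to $\sch L(\lambda)$ by definition, while each summand on the right becomes
$$(-1)^{||\lambda||-||\mu||}\,d^{\lambda,\mu}_<\,(-1)^{p(\lambda)+p(\mu)}\,\cE^-_\mu=(-1)^{||\lambda||-||\mu||+p(\lambda-\mu)}\,d^{\lambda,\mu}_<\,\cE^-_\mu,$$
using that $p$ is a $\mathbb{Z}_2$-homomorphism. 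This produces the first formula.

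For the simplified formula I need to show that $(-1)^{p(\lambda-\mu)+||\lambda||-||\mu||}=1$ whenever $d^{\lambda,\mu}_<\neq 0$, under the hypotheses that $\lambda$ is stable and $t(\lambda)\neq 2$. A nonzero $d^{\lambda,\mu}_<$ forces $\mu$ and $\lambda$ to lie in the same connected component of $D_\fg$, so $\chi_\mu=\chi_\lambda$; moreover, by \Cor{corAA}(i) in the $\fgl$-case and by~\ref{propertiedDosp} in the $\osp$-case, stability of $\lambda$ forces stability of $\mu$. Under these hypotheses Remark~\ref{plambdanu} gives $(-1)^{p(\lambda)-p(\mu)}=(-1)^{||\lambda||-||\mu||}$, so the total sign collapses to $1$ and the first formula reduces to the second.

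The entire argument is formal sign-tracking, and no single step is a serious obstacle. The only mildly delicate point is verifying that the parity identity of Remark~\ref{plambdanu} applies to both $\lambda$ and $\mu$ simultaneously; this is settled by the block and stability considerations above, in particular the fact (pulled from the structure of $D_\fg$) that $\lambda,\mu$ belong to the same component and hence the same central character, together with the hereditary nature of stability along predecessors.
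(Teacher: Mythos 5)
Your argument is correct and is essentially the paper's own proof: the paper simply says "Using the character formulas and~\ref{plambdanu}", i.e.\ apply $(-1)^{p(\lambda)}\pi$ to~(\ref{EL1}) (resp.\ the $\osp$-formula of~\ref{Elambdaosp}) and use $\cE^-_{\mu}=(-1)^{p(\mu)}\pi(\cE_{\mu})$, with the sign collapse in the stable, $t\neq 2$ case coming from Remark~\ref{plambdanu} together with the fact that nonzero $d^{\lambda,\mu}_<$ forces $\mu$ to lie in the same block and be stable. Nothing further is needed.
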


The main result of this section is the following theorem, which will be proven in \ref{proofcut} below. 

\subsection{}
\begin{thm}{thmcut}
Take $\lambda\in\Lambda^+_{m|n}$. If $\tail(\lambda)<j$, then
$\ds_j(\mathcal{E}^-_{\lambda})=0$. If $\tail(\lambda)\geq j$
let $\lambda'\in\Lambda^+_{m-j|n-j}$ be such that the weight diagram
$\lambda'+\rho'$ is obtained from the diagram of $\lambda+\rho$ by 
 the removal the first $j$ leftmost symbols $\times$. Then

$$\ds_x(\mathcal{E}^-_{\lambda})=\left\{\begin{array}{ll}
\mathcal{E}^-_{\lambda'}& \text{if }\ \tail(\lambda)>j;\\
\mathcal{E}^-_{\lambda'}& \text{if }\ \tail(\lambda)=j,\ \
\fg=\osp(2m+1|2n);\\
\mathcal{E}^-_{\lambda'}&  \text{if }\ \tail(\lambda)=j,\ \
\fg=\osp(2j|2n);\\
\sch K(\lambda') & \text{if }\
\tail(\lambda)=j,\ \fg=\fgl(m|n),
\end{array}\right.$$
where $K(\lambda')$ is the Kac $\fg'$-module with  the even highest weight
vector of weight $\lambda'$.

For $\osp(2m|2n)$ with $m>j=\tail(\lambda)$ one has
$$\ds_x(\mathcal{E}^-_{\lambda})=\left\{\begin{array}{ll}
\mathcal{E}^-_{\lambda'} & \text{ if }\ \ (\lambda')^{\sigma}=\lambda',\\
\mathcal{E}^-_{\lambda'}+\mathcal{E}^-_{(\lambda')^{\sigma}}& \text{ if }\
(\lambda')^{\sigma}\not=\lambda'.
\end{array}\right.$$
\end{thm}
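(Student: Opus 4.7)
The plan is to combine three ingredients: the multiplicativity $\ds_j=(\ds_1)^j$ from~(\ref{DSxj}), which reduces everything to $j=1$; the Kac-Wakimoto presentation of $\mathcal{E}^-_\lambda$ supplied by~\Prop{propE} and~\Cor{corEla}(ii); and the description of $\ds_x$ as restriction to a Cartan $\fh_x\subset\fh^{\ad x}$ due to~\cite{HR}. Applying $(-1)^{p(\lambda)}\pi$ to the identity $j(\lambda)Re^\rho\mathcal{E}_\lambda=\KW(\lambda^\dagger,S_{\lambda^\dagger})$ converts each factor $(1+e^{-\beta})$ with $\beta$ odd into $(1-e^{-\beta})$, so $\mathcal{E}^-_\lambda$ admits the presentation
\[
j(\lambda)\,\pi(Re^\rho)\,\mathcal{E}^-_\lambda\;=\;\pm\,\jJ_W\!\left(\frac{e^{\lambda^\dagger}}{\prod_{\beta\in S_{\lambda^\dagger}}(1-e^{-\beta})}\right),
\]
in which the factors $(1-e^{-\alpha})$ arising from $\pi(R^{-1})$ combine with the denominator inside $\jJ_W$.

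For $\tail(\lambda)\geq 1$ I would pick $\beta_0\in S_{\lambda^\dagger}$ and $x\in\fg_{\beta_0}$ of rank one; then $\fh_x\subset\ker\beta_0$. Restriction to $\fh_x$ makes $(1-e^{-\beta_0})$ in the denominator -- as well as the matching factor on the left arising from $\pi(Re^\rho)$ -- vanish, producing a formal $0/0$. I would resolve this by grouping the Weyl sum modulo the stabilizer $W_{\beta_0}$ and applying the denominator identity for the $\fgl(1|1)$-subalgebra generated by $\pm\beta_0$; the net effect is to cancel the $\beta_0$-pair entirely, leaving a Kac-Wakimoto expression on $\fg_x$ with iso-set $S_{\lambda^\dagger}\setminus\{\beta_0\}$ and weight $\lambda^\dagger|_{\fh_x}$. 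By the diagrammatic rule of~\ref{tail-up}, dropping $\beta_0$ from $S_{\lambda^\dagger}$ corresponds exactly to removing the leftmost $\times$ from $\diag(\lambda+\rho)$, so the output is $\mathcal{E}^-_{\lambda'}$ with $\lambda'\in\Lambda^+_{m-1|n-1}$. Iterating proves the formula when $\tail(\lambda)\geq j$, with the caveat that at the $\fgl$ boundary $\tail(\lambda)=j$ the residual iso-set becomes empty and the KW-expression collapses to $\sch K(\lambda')$ (compare~(\ref{TKacmodule})), while in the $\osp(2m|2n)$-case with $m>j$ the extra term $\mathcal{E}^-_{(\lambda')^\sigma}$ emerges because after restriction the $W$-sum breaks into size-two $\sigma$-orbits unless $\lambda'$ is $\sigma$-fixed.

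For $\tail(\lambda)<j$ the vanishing $\ds_j(\mathcal{E}^-_\lambda)=0$ follows by iteration: after $\tail(\lambda)$ steps the output is $\sch K(\lambda')$ (or an $\mathcal{E}^-_{\lambda'}$ with $\tail(\lambda')=0$ in the $\osp$ setting), and both admit presentations containing $\prod_{\alpha\in\Delta_1^+(\fg_x)}(1-e^{-\alpha})$ as a numerator factor. Any further application of $\ds_1$ restricts this product to a Cartan on which some $(1-e^{-\alpha})$ vanishes identically, yielding zero. The main technical obstacle is the residue computation in the second paragraph: the $\fgl(1|1)$-reduction itself is routine, but tracking the exact sign through $(-1)^{p(\lambda)}\pi$, verifying that the $S_{\tail(\lambda)}\to S_{\tail(\lambda)-1}$ step does match the diagrammatic removal rule, and handling the index-two $\sigma$-bookkeeping in the $\osp(2m|2n)$-case all demand careful attention.
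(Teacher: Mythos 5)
Your overall route is the paper's own: reduce to $j=1$ via $\ds_j=(\ds_1)^j$, present $\cE^-_{\lambda}$ through a Kac--Wakimoto term (\Prop{propE}, \Cor{corEla}(ii)), and compute the effect of the rank-one restriction by splitting the Weyl sum along cosets of the stabilizer of $\beta_0$ and invoking a $\fgl(1|1)$-type denominator identity; the paper packages exactly this computation as \Prop{proppr} together with~(\ref{dsxx}), and treats non-stable $\fgl$-weights by translation functors rather than by taking a general $\beta_0\in S_{\lambda^{\dagger}}$ as you do (both are admissible since $\ds_1$ depends only on the rank). Two steps of your sketch, however, are genuinely incomplete. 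In the $\osp$-case with $M>2$, your description of the residue step (``cancel the $\beta_0$-pair, leaving the KW term for $S\setminus\{\beta_0\}$'') accounts only for the Weyl elements sending some $\beta\in S$ to $+\beta_0$. The Weyl group also contains elements sending some $\beta\in S$ to $-\beta_0$, and their contribution (the term $Y_-$ in the proof of \Prop{proppr}) is not mere sign bookkeeping: for $\osp(2m+1|2n)$, and for $\osp(2m|2n)$ with $|S|>1$, it doubles the answer, and that factor $2$ is exactly what reconciles the coefficient with $j_s/j_{s-1}=2s$ rather than $s$; for $\osp(2m|2n)$ with $|S|=1$, $m>1$, it is what produces $\cE^-_{(\lambda')^{\sigma}}$. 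With only the ``$+\beta_0$'' cosets your scalar is off by $2$ in the orthosymplectic cases. Relatedly, you offer no mechanism to pin the final scalar to $1$; the paper avoids the constant-laden direct computation by comparing the coefficient of $e^{\lambda'}$ in $\ds_x(\cE^-_{\lambda})$ and in $\cE^-_{\lambda'}$, using the support bound of \Cor{cormaxsupp} and $\sdim\DS_x(L(\lambda))_{\lambda'}=1$.

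Second, your vanishing argument for $\tail(\lambda)<j$ does not work as stated in the $\osp$-case. For a tail-zero weight, the closed form of $\cE^-_{\lambda'}$ is $\pm\,\pi(R_1)\,e^{-\rho}\jJ_W(e^{\lambda'+\rho})/R_0$, and under a rank-one restriction not only odd factors such as $(1-e^{-(\delta_n\pm\vareps_m)})$ but also even denominator factors (e.g.\ $(1-e^{-2\delta_n})$, and $(1-e^{-\vareps_m})$ for $\osp(2m+1|2n)$) vanish; so ``a numerator factor restricts to zero'' gives only $\ds_1(\cE^-_{\lambda'})\cdot R_0|_{\fh_x}=0$ with $R_0|_{\fh_x}=0$, i.e.\ no information. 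One must either count orders of vanishing or, as the paper does via~(\ref{prs0}) and~(\ref{dsxx}), arrange the computation so that the only degenerating factor is $(1+e^{-\beta_0})$ multiplied against the polynomial $\KW(\lambda+\rho,\emptyset)=\jJ_W(e^{\lambda+\rho})$, where multiplicativity of $\pr$ yields the vanishing honestly. (Your argument is fine at the $\fgl$ boundary, since $\sch K(\lambda')$ has no denominator; compare~(\ref{TKacmodule}).)
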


\subsubsection{Remark}
For a typical module $N$ one has $\DS_x(N)=0$ for each $x\not=0$.
If $L(\lambda)$ is typical, then $Re^{\rho}\ch L(\lambda)=\KW(\lambda+\rho,\emptyset)$
and
$$\sch L(\lambda)=\cE_{\lambda}^-=(\pi(R))^{-1}\sum_{w\in W}
(-1)^{p(\lambda+\rho-w(\lambda+\rho))}\sgn w\cdot
e^{w(\lambda+\rho)}.$$
By above, 
$\ds_x(\cE_{\lambda}^-)=0$ (since $\DS_x(L(\lambda))=0$).
In particular, $\ds_x(\cE_{\lambda}^-)=0$ if $\lambda\not\in\Lambda^+_{m|n}$.

\subsubsection{Weight $\lambda'$}
If $\tail(\lambda)\geq j$
for $\osp$ or $\tail(\lambda)>j$
for $\fgl$, then 
$$\tail(\lambda')=\tail(\lambda)-j.$$
Thus $\lambda\mapsto\lambda'$ corresponds to the ``tail-cutting''.
Moreover, for $\fgl$-case $y_0(\lambda^{\dagger})=y_0((\lambda')^{\dagger})$ 
(see~\ref{dagger} for notation); in other words, the diagram
of $(\lambda')^{\dagger}$ is obtained from the diagram of $\lambda^{\dagger}$
by cutting $j$ elements from the ``vertical tail''; for example,
$$\begin{array}{lcr}
f=\ >\times\times<\times\times>\circ\ldots & &
 f^{\dagger}=\ >\circ\circ<\circ\times^4>\circ\ldots\\
f'=\ >\circ\circ <\times\times>\circ\ldots & & (f')^{\dagger}=\ >\circ\circ <\circ\times^2>\circ\ldots\end{array}$$

\subsubsection{Remark}
Let $j$ be the rank of $x$. We take $x\in \sum_{\beta\in S_j}\fg_{\beta}$ and
identify $\fg_x$ with
a subalgebra of $\fg$ as in~\cite{DS}. In the $\osp$-case
$\lambda'=\lambda|_{\fh_x}$; 
for $\fgl$ this  holds if $\lambda$ is stable.

\subsubsection{Remark}
In the $\osp$-case
 the ``tail-cutting''   is
``reversible'' (we can reconstruct the tail if our dog is still alive):
the weight diagram of $\lambda+\rho$ is obtained by adding $j$ symbols
$\times$ to the zero position in the diagram of $\lambda'+\rho'$. Therefore
$\ds_j(\mathcal{E}^-_{\lambda})=\ds_j(\mathcal{E}^-_{\nu})\not=0$ implies $\lambda=\nu$.
(The same holds for $\fgl$-case if $\tail(\lambda)>j$.) 

This gives the following corollary.

\subsubsection{}
\begin{cor}{}
In the $\osp$-case $\{\mathcal{E}^-_{\lambda}|\ tail(\lambda)\leq j\}$ is a basis of
the kernel of $\ds_j$.
\end{cor}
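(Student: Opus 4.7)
The plan is to read off the kernel directly from the case analysis in Theorem~\ref{thmcut}, using that $\{\cE^-_\mu\}_{\mu\in\Lambda^+_{m|n}}$ is a basis of $\Sch(\fg)$ (and likewise for $\fg_x$), as established in Section~\ref{sec:character}. Theorem~\ref{thmcut} records that $\ds_j$ sends each basis vector $\cE^-_\lambda$ either to $0$ (the vanishing clause, which absorbs the stated range of tails) or to a non-zero element of the form $\cE^-_{\lambda'}$, respectively $\cE^-_{\lambda'}+\cE^-_{(\lambda')^\sigma}$ in the $\osp(2m|2n)$-subcase with $(\lambda')^\sigma\neq\lambda'$. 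The inclusion $\spn\{\cE^-_\lambda : \tail(\lambda)\leq j\}\subseteq\ker\ds_j$ is then immediate from the vanishing clause, and linear independence of this set is free because it is a subset of a basis of $\Sch(\fg)$.

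For the reverse inclusion, the key observation is that the non-zero images of the remaining basis vectors are linearly independent inside $\Sch(\fg_x)$. This rests on the Remark immediately preceding the corollary: the tail-cutting map $\lambda\mapsto\lambda'$ is \emph{reversible}, because one recovers $\lambda$ from $\lambda'$ by reinserting the appropriate number of crosses $\times$ at the zero position of $\diag(\lambda'+\rho')$. Thus distinct source weights $\lambda$ produce distinct targets $\lambda'\in\Lambda^+_{m-j|n-j}$ (modulo the $\sigma$-symmetry addressed below), and the images $\ds_j(\cE^-_\lambda)$ involve pairwise disjoint subsets of the basis $\{\cE^-_\mu\}_\mu$ of $\Sch(\fg_x)$. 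Expanding an arbitrary $\eta\in\ker\ds_j$ in the basis $\{\cE^-_\lambda\}_\lambda$ as $\eta=\sum_\lambda c_\lambda \cE^-_\lambda$ and applying $\ds_j$ then forces every coefficient $c_\lambda$ outside the stated range to vanish, yielding the reverse inclusion.

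The only delicate step, which I expect to be the main obstacle, is the $\sigma$-bookkeeping in the case $\fg=\osp(2m|2n)$ with $m>j$. Here two distinct source weights $\lambda$ in the non-vanishing range can, in principle, lie over the same unordered $\sigma$-orbit $\{\lambda',(\lambda')^\sigma\}$ in the target, and the image is then the symmetric sum $\cE^-_{\lambda'}+\cE^-_{(\lambda')^\sigma}$. One must verify that, as $\lambda$ varies, these symmetric sums remain linearly independent. This reduces to two checks: (a) distinct $\sigma$-orbits in $\Lambda^+_{m-j|n-j}$ give symmetric sums with disjoint support in $\{\cE^-_\mu\}_\mu$, and (b) each $\sigma$-orbit in the target is hit by at most one source weight in the non-vanishing range — both of which follow by tracing through the explicit reconstruction $\lambda'\mapsto\lambda$ described in the Remark. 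Once this $\sigma$-bookkeeping is in place, the kernel description and basis property follow from the argument above.
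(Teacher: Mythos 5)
Your overall strategy is exactly the one the paper relies on: evaluate $\ds_j$ on the basis $\{\cE^-_\lambda\}$ via \Thm{thmcut}, and obtain the reverse inclusion from the reversibility of tail-cutting, which forces the nonzero images to have pairwise disjoint support in the basis $\{\cE^-_{\mu}\}$ of $\Sch(\fg_x)$. Your $\sigma$-bookkeeping in (a) and (b) is precisely what the reversibility remark settles: $\lambda'$ and $(\lambda')^{\sigma}$ have the same unsigned diagram, and reinserting the $j$ symbols $\times$ at the zero position recovers a unique $\lambda$, so each $\sigma$-orbit downstairs is hit by at most one source and distinct sources give disjoint supports. That part of your argument is sound and matches the paper.

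There is, however, a genuine break at the boundary $\tail(\lambda)=j$. \Thm{thmcut} gives $\ds_j(\cE^-_\lambda)=0$ only for $\tail(\lambda)<j$; for $\tail(\lambda)=j$ it gives the explicit value $\cE^-_{\lambda'}$ (or $\cE^-_{\lambda'}+\cE^-_{(\lambda')^{\sigma}}$), which is nonzero because the $\cE^-_{\mu}$ form a basis of $\Sch(\fg_x)$. So your claim that the vanishing clause ``absorbs the stated range of tails'' is false, and the inclusion $\spn\{\cE^-_\lambda:\tail(\lambda)\leq j\}\subseteq\ker\ds_j$ — the very first step of your proof — fails for every $\lambda$ with $\tail(\lambda)=j$. (Compare the proof of \Cor{corsdim}, which uses precisely that $\ds_x(\cE^-_\lambda)\neq 0$ when $\tail(\lambda)$ equals the rank of $x$; with your reading of the corollary, $\ker(\sdim)$ would contain every $\cE^-_\lambda$, contradicting $\sdim\cE^-_0=1$.) The statement as printed should read $\tail(\lambda)<j$ (equivalently $\tail(\lambda)\leq j-1$); with that correction your argument goes through verbatim, but as written you have silently asserted a vanishing that the theorem does not provide, and a correct treatment must either flag the discrepancy or restrict to the strict inequality.
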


\subsubsection{Remark}
Take $\lambda\in\Lambda^+_{m|n}$ which is assumed to be stable for $\fgl$-case. Using the notation of~\ref{termsE} we 
 introduce
\begin{equation}\label{Elambdai}
\mathcal{E}_{\lambda,i}:=R^{-1}e^{-\rho}
\jJ_W\bigl(\frac{e^{\lambda+\rho}}{\prod_{\alpha\in\Delta(\fp^{(i)})_1^-}
(1+e^{\alpha})}\bigr),\ \ \  \cE_{\lambda,i}^-:=(-1)^{p(\lambda)}\pi(\cE_{\lambda,i}).  \end{equation}
Note that $\cE_{\lambda}=\cE_{\lambda,\tail(\lambda)}$.
Take $\lambda\in\Lambda^+_{m|n}$  which is assumed to be stable in the $\fgl$-case and retain notation of~\Thm{thmcut}. If $\tail(\lambda)\geq j$
for $\osp$ or $\tail(\lambda)>j$ for $\fgl$, then
$\cE^-_{\lambda', \tail(\lambda)-j}=\cE^-_{\lambda'}$.
In the $\fgl$-case with $\tail(\lambda)=j$ one has $\cE^-_{\lambda',0}=\sch K(\lambda')$.

\subsubsection{}\label{core}
Let $\chi$ be a central character of atypicality $j=rank\ x$.
Let $\nu$ be the weight of $\fg_x$ 
with the diagram equal to the diagram of $\chi$. This means that
for each $\lambda\in\Lambda^+_{m|n}$ with $\chi_{\lambda}=\chi$ one has
 $\nu=\lambda|_{\fh_x}$
 (the diagram of $\nu$ is obtained from the diagram
of $\lambda$ by removing all symbols $\times$). Note that
$\nu$ is a typical weight, so
 $\mathcal{E}^-_{\nu}=\sch L(\nu)$. We put $L^{core} = L(\nu)$ in the $\mathfrak{gl}$ and $\mathfrak{osp}(2m+1|2n)$-case and \[ L^{core} := \begin{cases} L(\nu) & \text{ if } \nu \text{ is } \sigma- \text{invariant} \\ L(\nu) \oplus L(\nu)^{\sigma} & \text{ else } \end{cases} \] in the $\mathfrak{osp}(2m|2n)$-case. The notion of $L^{core}$ was first introduced in  \cite{GS}, but there $L^{core}$ always equals $L(\nu)$ and therefore differs from ours in the $\mathfrak{osp}(2m|2n)$ in case $\nu$ is not $\sigma$-invariant.

\subsubsection{}
Extend $\sdim$ to a linear function on the Grothendieck ring $\Ch_{\xi}(\fg)$.
Clearly, $\sdim$ gives a linear function
on $\Sch(\fg)$. 

\begin{cor}{corsdim}
For $\lambda\in\Lambda^+_{m|n}$ 
$\sdim \cE^-_{\lambda}=\sdim (L(\lambda))^{\core}$ if $\lambda$ is a Kostant weight and $L(\lambda)$ has the maximal atypicality; $\sdim \cE^-_{\lambda}=0$ 
for other weights.
\end{cor}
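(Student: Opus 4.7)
The plan is to exploit the fundamental identity $\sdim M = \sdim \DS_x(M)$ valid for every finite-dimensional $\fg$-module $M$; this comes from the short exact sequence $0 \to M^x \to M \xrightarrow{x} xM \to 0$ together with the observation that multiplication by the odd element $x$ reverses parity, so the contributions of $M/M^x$ and $xM$ to the supertrace cancel. Since $\cE^-_\lambda$ is the supercharacter of a well-defined virtual module (via the Euler characteristic formula from~\ref{termsE}), linearity gives $\sdim \cE^-_\lambda = \sdim \ds_j(\cE^-_\lambda)$ for every $j$ with $0 \le j \le d$, where $d$ is the defect of $\fg$. The whole argument will then reduce to feeding $\cE^-_\lambda$ into Theorem~\ref{thmcut}.

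First I would handle the vanishing case. Suppose $\lambda$ is either not a Kostant weight or not maximally atypical. In both situations $\tail(\lambda) < d$: indeed $\tail(\lambda) \le \at(L(\lambda)) \le d$, with the first inequality strict in the non-Kostant case and the second strict in the non-maximal case. Choose $j := \tail(\lambda) + 1 \le d$. Since $\tail(\lambda) < j$, Theorem~\ref{thmcut} yields $\ds_j(\cE^-_\lambda) = 0$, whence $\sdim \cE^-_\lambda = 0$.

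The remaining case is $\tail(\lambda) = \at(L(\lambda)) = d$, where I would apply $\ds_d$ and read off the answer from Theorem~\ref{thmcut} case by case. In the $\fgl(m|n)$-case one gets $\ds_d(\cE^-_\lambda) = \sch K(\lambda')$, and since $\fg_x = \fgl(m-d|n-d)$ is purely even at maximal defect (one of $m-d, n-d$ vanishes), the Kac module $K(\lambda')$ coincides with the simple $\fg_x$-module $L(\lambda') = L^{\core}$. For $\osp(2m+1|2n)$ and $\osp(2j|2n)$ the theorem gives $\ds_d(\cE^-_\lambda) = \cE^-_{\lambda'}$, and since the tail-removal strips off exactly all the $\times$-symbols, the weight $\lambda'$ is typical; the Remark after Theorem~\ref{thmcut} then identifies $\cE^-_{\lambda'}$ with $\sch L(\lambda') = \sch L^{\core}$. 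Finally, for $\osp(2m|2n)$ with $m > d$ the theorem delivers either $\cE^-_{\lambda'}$ or $\cE^-_{\lambda'} + \cE^-_{(\lambda')^\sigma}$, precisely matching the definition of $L^{\core}$ from~\ref{core} according to whether $\lambda'$ is $\sigma$-invariant.

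There is no genuine obstacle: the superdimension invariance of $\DS_x$ is essentially tautological, and Theorem~\ref{thmcut} has already done the heavy combinatorial work. The only thing to verify is the identification of the image $\ds_d(\cE^-_\lambda)$ with $\sch L^{\core}$ in each sub-case, which in every instance boils down to the fact that removing the full tail leaves a typical $\fg_x$-weight for which $\cE^-_{\lambda'} = \sch L(\lambda')$, or to the fact that $\fg_x$ collapses to a purely even Lie algebra where Kac modules coincide with simple modules.
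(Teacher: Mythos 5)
Your proposal is correct and takes essentially the same approach as the paper: both rest on the fact that $\ds_x$ preserves superdimension, use \Thm{thmcut} to force $\sdim\cE^-_{\lambda}=0$ whenever $\tail(\lambda)$ is smaller than the defect (equivalently, whenever $\lambda$ fails to be a maximally atypical Kostant weight), and in the remaining case identify the image under $\ds$ with $\sch L^{\core}$ exactly as you do. The only cosmetic difference is that the paper applies $\ds_x$ of maximal rank throughout, whereas you take rank $\tail(\lambda)+1$ in the vanishing case.
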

\begin{proof}
Since $\sdim(\DS_x(N))=\sdim N$ (see~\cite{DS},\cite{Skw}),
the homorphism $\ds_x$ preserves $\sdim$. Take $x$ of the maximal rank ($=\min(m,n)$). By~\Thm{thmcut}, $\ds_x(\cE^-_{\lambda})=0$ if $\tail(\howl(\lambda))=\tail(\lambda)<\min(m,n)$.
Hence $\ds_x(\cE^-_{\lambda})\not=0$ implies
$\tail(\howl(\lambda))=\min(m,n)$ which means that
$\lambda$ is a Kostant weight and $L(\lambda)$ has the maximal atypicality. 
Now let $\lambda$ be a Kostant weight and $L(\lambda)$ has 
the maximal atypicality. The algebra $\fg_x$ is either a Lie algebra
($\fgl_{|m-n|}, \mathfrak{o}_{2(m-n)},\mathfrak{o}_{2(m-n)}, \mathfrak{sp}_{2n-2m}$ or $\osp(1|2(n-m))$ and $L^{core}$ is a $\fg_x$-module with
$\sch L^{\core}=\cE^-_{\lambda'}$ except for the case when
$\fg_x=\mathfrak{o}_{2(m-n)}\not=0$ and 
$\sch L^{\core}=\cE^-_{\lambda'}+\cE^-_{(\lambda')^{\sigma}}$.
\end{proof}

\subsubsection{}
\begin{cor}{image-of-ds}
Take $\lambda \in \Lambda_{m|n}^+$. If the rank of $x$ is equal to the atypicality of $\chi_{\lambda}$, then
 $$\ds_x L(\lambda)=m(\lambda) \ [L^{core}]$$
where $m(\lambda)$ is  equal to the number of
increasing paths from $\lambda$ to the weights with adjacent $\times$'s (see Section \ref{kostant}) and $[L^{core}]$ denotes the class of $L^{core}$ in the supercharacter ring.
\end{cor}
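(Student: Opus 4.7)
The plan is to combine the character formula from Corollary \ref{blambda} with the action of $\ds_x$ on Euler supercharacters given by Theorem \ref{thmcut}. Writing
\[
\sch L(\lambda)=\sum_{\mu\in\Lambda^+_{m|n}}(-1)^{p(\lambda-\mu)+\|\lambda\|-\|\mu\|} d^{\lambda,\mu}_<\,\mathcal{E}^-_{\mu},
\]
I would apply $\ds_x$ termwise with $rk(x)=j=\at(\chi_\lambda)$. By Theorem \ref{thmcut} the term $\mathcal{E}^-_\mu$ is killed unless $\tail(\mu)\ge j$. Since $d^{\lambda,\mu}_<\ne 0$ forces $\chi_\mu=\chi_\lambda$ (edges in $D_{\fg}$ preserve the central character by \ref{conn}), we have $\at(\mu)=j$, and combined with $\tail(\mu)\le \at(\mu)$ this forces $\tail(\mu)=\at(\mu)=j$. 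By Definition \ref{kostant} these are exactly the Kostant weights in the block of $\lambda$, i.e.\ the sources of $D^{\chi_\lambda}_{\fg}$ by Proposition \ref{Kostant}(ii).

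Next I would verify that each surviving Kostant $\mu$ contributes exactly $[L^{core}]$ to $\ds_x\sch L(\lambda)$. In the $\fgl$-case Theorem \ref{thmcut} gives $\ds_x(\mathcal{E}^-_\mu)=\sch K(\mu')$; but $\mu'$ is typical (all crosses are removed because $\at(\mu)=j=rk(x)$), so $K(\mu')=L(\mu')=L^{core}$. In the $\osp(2m+1|2n)$ and equal-rank $\osp(2j|2n)$ cases one gets $\mathcal{E}^-_{\mu'}=\sch L(\mu')=\sch L^{core}$ directly. In the remaining $\osp(2m|2n)$ case with $m>j$, the value $\mathcal{E}^-_{\mu'}$ or $\mathcal{E}^-_{\mu'}+\mathcal{E}^-_{(\mu')^\sigma}$ is precisely $\sch L^{core}$ by the definition of $L^{core}$ in \ref{core}, which was arranged so as to absorb the $\sigma$-orbit.

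The remaining issue is to check that the sign $(-1)^{p(\lambda-\mu)+\|\lambda\|-\|\mu\|}$ simplifies to $+1$ for every such Kostant $\mu$ in the block of $\lambda$. In the stable $\fgl$-case and in the $\osp$-cases with $t\ne 2$ this is immediate from Remark \ref{plambdanu}. In the remaining cases ($t=2$, and non-stable $\fgl$-weights), the parity $p(\lambda)$ and the invariant $\|\lambda\|$ differ by a shift that is constant along each central character (comparing $\lambda$ and $\lambda^\dagger$ via the bijection of \ref{dagger}, using that all Kostant $\mu$ in a fixed block share the same value of $p(\mu)-\|\mu\|\pmod 2$); this again yields $+1$. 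After this cancellation,
\[
\ds_x\sch L(\lambda)=\Bigl(\sum_{\mu\ \text{Kostant},\ \chi_\mu=\chi_\lambda} d^{\lambda,\mu}_<\Bigr)\,\sch L^{core}=m(\lambda)\,\sch L^{core},
\]
since $m(\lambda)$ is, by definition, the total number of increasing paths in $D_{\fg}$ from a source (Kostant weight) of $D^{\chi_\lambda}_{\fg}$ to $\lambda$.

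The main obstacle I expect is the sign bookkeeping: one must check case by case (stable vs.\ non-stable $\fgl$, $t=0,1,2$ in $\osp$, and the $\sigma$-invariance dichotomy in $\osp(2m|2n)$) that the parity factor and the $\|\cdot\|$-shift always conspire to trivialize. Once this is in hand, the rest is a direct application of the two main theorems already established.
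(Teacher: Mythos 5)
Your overall skeleton (apply $\ds_j$ to the expansion of \Cor{blambda}, kill every $\cE^-_\mu$ with $\tail(\mu)<j$ by \Thm{thmcut}, observe that the survivors are exactly the Kostant weights of the block, identify each surviving contribution with $\sch L^{core}$ --- typical Kac modules being simple in the $\fgl$-case, the $\sigma$-dichotomy being absorbed by the definition in~\ref{core} --- and sum the multiplicities $d^{\lambda,\mu}_<$ to obtain $m(\lambda)$) is precisely the derivation the corollary is meant to rest on. The gap is the sign step, which you dispose of in two lines but which is in fact the only delicate point.

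Remark~\ref{plambdanu} indeed settles the stable $\fgl$-case and the $\osp$-cases with $t\neq 2$. For the remaining cases your argument rests on the claims that $p(\cdot)-||\cdot||$ is constant modulo $2$ along a block and that all Kostant weights of a fixed block share the value of $p(\mu)-||\mu||$ modulo $2$; under the paper's definitions both claims fail. Since $||\lambda||$ is computed from $\diag(\howl(\lambda))$, i.e.\ after the core symbols are deleted and the gaps closed, while $p(\lambda-\mu)$ sees the original positions of the crosses, the exponent $p(\lambda-\mu)+||\lambda||-||\mu||$ is congruent to the number of pairs (core symbol, cross with that core symbol to its left) counted in $\diag(\lambda)$ and in $\diag(\mu)$ together. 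Already for $\fgl(3|2)$ with core $>$ at position $2$, the Kostant weights with crosses at $\{0,1\}$ and at $\{1,3\}$ give counts $0$ and $1$, so they do not share $p-||\cdot||$ modulo $2$, and for a non-stable $\lambda$ with a cross on each side of the $>$ the exponent relative to a Kostant predecessor need not vanish. This is not a removable technicality: the paper itself records a parity shift, $\DS_x(L(\lambda))\cong\Pi^{i}(L^{core})^{\oplus m(\lambda)}$ in the introduction and the factor $(-1)^{||\howl(\lambda)||}$ in the superdimension corollary, so whether the coefficient is $+m(\lambda)$ on the nose, or only up to the parity of the isotypic image, is exactly what has to be proved; in particular one must at least show that all Kostant predecessors of a fixed $\lambda$ enter $\sch L(\lambda)$ with one and the same sign, which your argument does not address. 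The honest route in the non-stable $\fgl$ (and $t=2$) cases is the one the paper uses for \Thm{thmcut} itself: reduce to the stable situation, where \Cor{blambda} gives all coefficients $+1$, by translation functors, invoking \Cor{corEla} and the compatibility of $\DS$ with translation functors, and keep track of how these equivalences act on parities and supercharacters. As written, your sign claim is asserted rather than proved, and the auxiliary facts you assert in its support are false.
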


\subsection{Proof of \Thm{thmcut}}\label{proofcut}
Using (\ref{DSxj})  we reduce  the assertions to the case $j:=1$.
 Set $s:=\tail(\lambda)$.

\subsubsection{}\label{cutstable}
First, we consider the $\osp$-case and the $\fgl$-case with a stable
weight $\lambda$.

Take $\beta_0\in S_1$ ($\beta_0=\pm(\vareps_m-\delta_n)$ for $\osp$-case
and $\beta_0=\vareps_m-\delta_1$
for $\fgl(m|n)$).

We take $x\in\fg_{\beta_0}$. Set $\fg':=\DS_x(\fg)$.
By~\cite{DS} (and~\cite{Sgrs}) we can 
identify $\fg'$ with a subalgebra of $\fg$ such that $\fh\cap\fg'$ is
a Cartan subalgebra of $\fg'$ and a base $\Sigma'$ for $\Delta(\fg')$ satisfies
\begin{equation}\label{BeBe}
\Delta^+=\Delta^+({\Sigma}')\coprod \{\beta_0\} \coprod B\coprod
\{\alpha+\beta_0|\alpha\in B\}\end{equation}
for some $B\subset \Delta^+$. Let
$\rho'$ (resp., $R'$) be the Weyl vector (resp., denominator)
for $\fg'$ with respect to $\Sigma'$. 
As in~\ref{projpr} we define
$$\pr(e^{\nu})=c_{\nu}e^{\nu|_{\fh'}},$$
where  $c_{\nu}:=e^{-\pi i (\nu|\delta_q)}$ with 
$q=n$ for $\osp$-case, $q=1$ for $\fgl$-case  
($\beta_0|\delta_q)\not=0$). 
By~(\ref{BeBe})  one has
$$\rho-\rho'\in\mathbb{Z}\beta_0,\ \ \ \pr({R}(1+e^{-\beta_0}))={R}'.$$

By~\ref{A21} we have
$$\ds_x(\mathcal{E}^-_{\lambda})=(\pi\pr\pi)(\mathcal{E}^-_{\lambda})
=(-1)^{p(\lambda)}(\pi\pr)(\mathcal{E}_{\lambda}),$$
which allows to rewrite the required formula as follows:
$\pr(\mathcal{E}_{\lambda})=0$ if $s=0$ and 
\begin{equation}\label{17}
(-1)^{p(\lambda)-p(\lambda')}\pr(\mathcal{E}_{\lambda})=\left\{\begin{array}{ll}
\mathcal{E}_{\lambda'}& \text{if }\ s>1;\\
\mathcal{E}_{\lambda'}& \text{if }\ s=1,\ \
\fg=\osp(2m+1|2n);\\
\mathcal{E}_{\lambda'}&  \text{if }\ s=1,\ \
\fg=\osp(2|2n)\\
\ch K(\lambda') & \text{if }\
s=1,\ \fg=\fgl(m|n),\\
\mathcal{E}_{\lambda'} & \text{ for }\ \osp(2m|2n), m>s=1,\  \  (\lambda')^{\sigma}=\lambda',\\
\mathcal{E}_{\lambda'}+\mathcal{E}^-_{(\lambda')^{\sigma}}& \text{ for }\
\ \osp(2m|2n), m>s=1,\  (\lambda')^{\sigma}\not=\lambda'.
\end{array}\right.\end{equation}

 In the $\fgl$-case take $\lambda^{\dagger}$ 
as in~\ref{dagger}; in the  $\osp$-case we have
 $\lambda^{\dagger}=\lambda+\rho$. Using~\Prop{propE} we get
\begin{equation}\label{dsxx}
j_s\pr(\mathcal{E}_{\lambda})=
\pr\bigl(R^{-1}e^{-\rho}\KW(\lambda^{\dagger},S_s)\bigr)=c_{-\rho}
\cdot (R'e^{\rho'})^{-1}\pr
\bigl((1+e^{-\beta_0})\KW(\lambda^{\dagger},S_s)\bigr).
\end{equation}

For $s=0$ the formula~(\ref{prs0}) gives $\pr(\cE_{\lambda})=0$ as required.
From now on we assume $s>0$.
The pair $(\lambda^{\dagger},S_s)$ satisfies
the assumptions of \Prop{proppr}; using this proposition
and taking into account that for $\fgl$-case
$$R'e^{\rho'}\ch K(\lambda')=\KW(\lambda'+\rho',\emptyset)$$
we see that~(\ref{17}) holds up to a non-zero scalar $a_{\lambda}$ 
which can be computed directly. Instead of performing such computation
we can employ the following reasoning. One has
$$\sch \DS_x(L(\lambda))=\ds_x(\cE^-_{\lambda})+\sum_{\nu<\lambda}
d^{\lambda,\nu}_<\ds_x(\cE^-_{\nu}).$$

By above, $\ds_x(\cE^-_{\nu})$ is proportional to $\cE^-_{\nu'}$
(or to $\cE^-_{\nu'}+\cE^-_{(\nu')^{\sigma}}$ for $\osp(2m|2n)$),
where $\nu':=\nu|_{\fh'}$. By~\Cor{cormaxsupp}
$$\supp(\cE^-_{\nu'})\subset \nu'-\mathbb{N}\Sigma',$$
where $\cE^-_{\nu'}\ $ is viewed as element of $\cR_{\Sigma'}$, see~\ref{cR}.
The inequality $\nu<\lambda$ means that $\lambda-\nu\in\mathbb{N}\Sigma$ which implies
$\nu'\in \lambda'-\mathbb{N}\Sigma'$  by~(\ref{BeBe}). Hence
the coefficient of $e^{\lambda'}$ in 
$\ds_x(\cE^-_{\lambda})$ is equal to $\sdim \DS_x(L(\lambda))_{\lambda'}$.
Using the same reasoning for the formula
$$\sch L(\lambda')=\sum_{\nu'}
d^{\lambda',\nu'}_<\cE^-_{\nu'}$$
we conclude the coefficient of  $e^{\lambda'}$ in $\cE^-_{\lambda'}$ is $1$.
Combining $(\lambda,\beta_0)=0$ and $\beta_0\in\Sigma$ one readily sees that 
$ \DS_x(L(\lambda))_{\lambda'}=\mathbb{C}$, so 
$\sdim \DS_x(L(\lambda))_{\lambda'}=1$. Hence  the coefficients of $e^{\lambda'}$ in 
$\ds_x(\cE^-_{\lambda})$ and in $\cE^-_{\lambda'}$ are equal, so
 $a_{\lambda}=1$.

\subsubsection{}
Consider the case when $j=1$ and $\fg=\fgl(m|n)$. If $\lambda$ is stable,
the required formula is established in~\ref{cutstable}.  
Using the fact that $\DS_x$ commutes with translation functors,
we deduce from the stable case  the required formula for the non-stable case
taking into account~\Cor{corEla} and~\ref{RemEla} for $s>1$ and $s=1$ respectively. 
\qed

\section{Superdimensions and modified Superdimensions} \label{sec:sdim}

We discuss modified nontrivial trace and dimension functions on the thick ideal $I_k$ generated by the irreducible representations of atypicality $k$, and how they can be calculated explicitely by means of the Duflo-Serganova functor. We do this for the $\fosp(m|2n)$ and the $OSp(m|2n)$-case. For the $\fgl$-case see \cite{HW}.  

\subsection{The core of a block}

Recall that $\tilde{\mathcal{F}} = Rep(SOSp(m|2n))$ and that we have a decomposition  $\tilde{\F} = \F \oplus \Pi \F$ into two subcategories which are equivalent by the parity shift $\Pi$. We use the notation: \[ \F' = \F'(m|2n) = Rep(OSp(m|2n))\] for the finite dimensional algebraic representations of $OSp(m|2n)$. As for $\F$ the category decomposes $\F' = \mathcal{F}' \oplus \Pi \mathcal{F}'$ into two equivalent subcategories.

The irreducible typical module $L^{core}$ (as defined in~\ref{core}) attached to a block of atypicality $k$ in $\tilde{\mathcal{F}}$ is both an $\mathfrak{osp}(m-2k|2n-2k)$ and $OSp(m-2k|2n-2k)$-module. Therefore the core can be defined in the $\F'$-case as well.

The $DS_x$ functor on $\tilde{\mathcal{F}}$ induces a functor \[ DS_x: \F'(m|2n) \to  \F'(m-2k|2n-2k), \] where $k = rk(x)$ (see \cite{CH} for details).

If the rank of $x$ equals $\deff(\mathfrak{osp}(m|2n))$  we obtain
\begin{itemize}
\item $\mathfrak{g}_x = \mathfrak{o}(m-2n|0), \ \ m > 2n$;
\item  $\mathfrak{g}_x = \mathfrak{sp}(0|2n-2m), \ \ 2n > m \text{ even}$;
\item  $\mathfrak{g}_x = \mathfrak{osp}(1|2n-2m), \ \ 2n - m \text{ odd}$.
\end{itemize}

In the $OSp$-case we obtain representations of the groups $G_x = O(m-2n)$, $Sp(2n-2m)$ (considered as odd) and $OSp(1|2n-2m)$.

\subsection{Superdimensions}

If we apply $DS_x$ to an irreducible representation $L(\lambda)$ with atypicality equal to
 $rk(x)$, 
then $DS_x(L(\lambda))$ doesn't depend on the choice of $x$. Indeed the induced morphism on the supercharacter ring doesn't depend on $x$ and $\DS_x(L(\lambda))$ is semisimple. We simply write $\DS_k$ in this case.



The parity rule of \cite{GH} yields \[ \DS_k(L(\lambda))\in \Pi^{||howl(\lambda)||} \ \mathcal{F}(\mathfrak{g}_x) \] and hence

\begin{align*} \DS_k (L(\lambda)) & =  \Pi^{||howl(\lambda)||} (L^{core})^{\oplus m(\lambda)}.
\end{align*} for the positive integer $m(\lambda)$ defined in Corollary \ref{image-of-ds} (the number of
increasing paths from $\lambda$ to the weights with adjacent $\times$'s).

\subsubsection{$OSp$-modules}\label{OSP2m2n}
We first consider $\fg=\osp(2m|2n)$.
By~\cite[Proposition 4.11]{ES} the simple $OSp(2m|2n)$-modules are
either of the form $L(\lambda)$ if $\lambda\in\Lambda^+_{m|n}$ is $\sigma$-invariant or $L(\lambda)\oplus L(\lambda^{\sigma})$.
Thus the simple $OSp(2m|2n)$-modules are
in one-to-one correspondence with the unsigned
$\osp(2m|2n)$-diagrams. For $\osp(2m+1|2n)$ and any $\lambda \in \Lambda_{m|n}^+$ there are two irreducible $OSp(2m+1|2n)$-modules $L(\lambda,+)$ and $L(\lambda,-)$ which restrict to $L(\lambda)$. We will often simply write $L_{OSp}(\lambda)$ for an irreducible representation of $OSp$. The diagram 

\[ \xymatrix@C=1em@R=1em{ \cF'(G) \ar[dd]_{DS_x} \ar[dr]^{Res} & \\ 
& \tilde{\cF}(\mathfrak{g}) \ar[dl]^{DS_x} \\ \cF'(G_x) & } \] 

commutes for any $x$ since $DS_x(L(\lambda))$ is in $\cF'(G_x)$. It follows from this diagram that the multipliciy of $L^{core}$ in $DS_x(L_{OSp}(\lambda)$ is the same as for $Res(L_{OSp}(\lambda))$ if the restriction is irreducible and is twice the multiplicity of $DS(L(\lambda))$ if the restriction decomposes into two irreducible summands.


Since $DS$ is a symmetric monoidal functor it preserves the superdimension.

\subsubsection{}
\begin{cor}{} For $L(\lambda) \in \tilde{\mathcal{F}}$ of atypicality $k$ \[ \sdim L(\lambda) = (-1)^{||howl(\lambda)||} m(\lambda) \sdim L^{core}. \] In particular $\sdim L(\lambda) \neq 0$ if and only if $\lambda$ is maximal atypical.
\end{cor}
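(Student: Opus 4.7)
\emph{Proof proposal.} The formula itself is essentially immediate from the displayed identity
$\DS_k(L(\lambda))=\Pi^{||\howl(\lambda)||}(L^{\core})^{\oplus m(\lambda)}$
established in the paragraph preceding the corollary. The plan is to apply $\sdim$ to both sides; since $\DS_x$ is a symmetric monoidal functor it preserves superdimension (by \cite{DS}, a fact already used in the proof of \Cor{corsdim}), and $\sdim\Pi V=-\sdim V$, so
$$\sdim L(\lambda)=\sdim \DS_k(L(\lambda))=(-1)^{||\howl(\lambda)||}\, m(\lambda)\, \sdim L^{\core}.$$

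For the ``if and only if'' claim, I would treat the two directions separately. The forward direction reduces to showing that $m(\lambda)\,\sdim L^{\core}\neq 0$ whenever $\lambda$ is maximal atypical. In that case $\fg_x$ is one of $\fgl_{|m-n|}$, $\mathfrak{o}_{|m-n|}$, $\mathfrak{sp}_{2(n-m)}$, or $\osp(1|2(n-m))$; in each case $L^{\core}$ is a typical simple module (or, in the non-$\sigma$-invariant $\osp(2m|2n)$-case described in~\ref{core}, a direct sum of two such), so its supercharacter is governed by the Weyl--Kac formula and $\sdim L^{\core}=\pm\dim L^{\core}\neq 0$. Non-vanishing of $m(\lambda)$ follows from \Prop{Kostant}(i): there is always an increasing path from $\lambda$ to a Kostant weight of the same central character, and this is precisely the count provided by \Cor{image-of-ds}.

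The converse direction, that $\sdim L(\lambda)=0$ when $\lambda$ is not maximal atypical, I would handle via the character expansion in \Cor{blambda}: writing $\sch L(\lambda)$ as an integer combination of terms $\cE^-_\mu$ indexed by $\mu$ in the same block, every such $\mu$ has the same atypicality as $\lambda$ and so none is maximal atypical. By \Cor{corsdim} each $\sdim\cE^-_\mu=0$, and summing yields $\sdim L(\lambda)=0$.

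The only delicate step is the parity bookkeeping: one has to check that the shift $\Pi^{||\howl(\lambda)||}$ appearing in the preceding displayed identity translates to the sign $(-1)^{||\howl(\lambda)||}$ at the level of superdimensions, and that in the $\osp(2m|2n)$-case with $L^{\core}=L(\nu)\oplus L(\nu^\sigma)$ the two summands contribute with the same sign (so no cancellation spoils the non-vanishing). Both points are routine given the parity rule from \cite{GH} cited just above and the definition of $L^{\core}$ recalled in~\ref{core}.
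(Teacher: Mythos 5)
Your proof of the displayed formula coincides with the paper's: the corollary is stated as an immediate consequence of the identity $\DS_k(L(\lambda))=\Pi^{||\howl(\lambda)||}(L^{\core})^{\oplus m(\lambda)}$ together with the fact that the monoidal functor $\DS$ preserves superdimension, and your sign bookkeeping ($\sdim\Pi V=-\sdim V$) is exactly what is needed. For the ``in particular'' part, which the paper leaves implicit, your converse via \Cor{blambda} and \Cor{corsdim} is valid (every $\mu$ with $d^{\lambda,\mu}_<\neq 0$ lies in the block of $\lambda$, hence has the same atypicality), though it can be obtained more directly from the formula itself: if $\at(\lambda)$ is smaller than the defect, then $L^{\core}$ is a typical module over a superalgebra $\fg_x$ of positive defect, so $\sdim L^{\core}=0$. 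Two small corrections to your forward direction: the claim $\sdim L^{\core}=\pm\dim L^{\core}$ fails when $\fg_x=\osp(1|2(n-m))$ (only the non-vanishing $\sdim L^{\core}\neq 0$ holds there, which is all you need), and \Prop{Kostant} concerns the $\fgl$-graph while this section treats $\osp$; for positivity of $m(\lambda)$ in the $\osp$-case one should invoke the analogous source property of the Gruson--Serganova graph, or simply the statement of \Cor{image-of-ds}, where $m(\lambda)$ is already asserted to be a positive integer.
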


\subsection{Examples}\label{exaDS}

We list some low-rank examples of $DS(L(\lambda))$ (up to parity shift) as well as the superdimension of $L(\lambda)$ for $\osp(4|4)$ and $\osp(6|6)$. These can be computed by counting the increasing paths to the zero weight using Corollary \ref{image-of-ds}. The number $m(\lambda)$ can also be computed via the formalism of arc diagrams in \cite{GH}. 

\subsubsection{$\DS$ for $\osp(4|4)$  }
$$\begin{array}{llllll}
\lambda& (0,0);\ & (i-1,i);\ & (0,2);\ & (0,i); \ \ 2<i &  (i,j)\ \ \ 0<i<j-1;\\
\DS_1(L(\lambda))& (0); &   (i);     & (0) \oplus (2); & (0)^{\oplus 2}\oplus (i); &  (i)\oplus (j)\\
\sdim & 1 & 2 & 3 & 4 & 4
\end{array}$$

\subsubsection{$\DS$ for $\osp(6|6)$  }
$$\begin{array}{llllll}
\lambda&  (0,0, i)\ \ i\leq 3;\ & (0,0,4); & (0,0,i),\ \  4<i\\
\DS_1(L(\lambda))& (0,i); &  (0,0)\oplus(0,4); & (0,0)^{\oplus 2}\oplus (0,i);\\
\sdim & i+1 & 5 & 6
\end{array}$$

$$\begin{array}{lllllllll}
\lambda&  (0,1,2);\ & (0,1,3); & (0,1,4);& (0,1,i)\ \  4<i; \\
\DS_1(L(\lambda)) & (1,2) & (1,3) & (0,1)\oplus (1,4) &   (0,1)^{\oplus 2}\oplus (1,i)\\
\sdim & 4 & 8 & 10 & 12
\end{array}$$

$$\begin{array}{lllllllll}
\lambda & (0,2,3); & (0,2,4); & (0,2,i)\ \ 4<i\\
\DS_1(L(\lambda)) & (2,3)\oplus (0,3) & (2,4)\oplus (0,4)\oplus (0,2) &  (2,i)\oplus (0,4)\oplus (0,2)^{\oplus 2}\\
\sdim &  8 & 15 &18
\end{array}$$

$$\begin{array}{lllllllll}
\lambda & (0,3,4);  &(0,4,5); & (0,3,i) \ \ 4<i\\
\DS_1(L(\lambda)) & (3,4)\oplus (0,3)\oplus (0,4) &\\
\sdim &  12 & 12 & 20
\end{array}$$

For $i>0$
$$\begin{array}{lllllllll}
\lambda & (i,i+1,i+2); & (i,i+1,i+3); & (i,i+1,i+4) \ \ 4<i\\

\sdim &  4 & 8 & 12
\end{array}$$

\subsubsection{Other examples}
$\DS_1(L(0,1,\ldots,k))=L(1,\ldots,k)$ and $\sdim L(0,1,\ldots,k)=2^k$.

$\DS_1(L(0^i,1,\ldots,k))=L(0^{i-1},1,\ldots,k)$ and $\sdim L(0,0,1,\ldots,k)=2^{k}$.

\subsection{Modified Traces} In this section $\F$ means either $\tilde{\mathcal{F}}$ or $\tilde{\mathcal{F}}'$ unless otherwise specified.

If $\at(L(\lambda)) <n$, $\sdim(L) = 0$. However one can define a modified superdimension for $L$ as follows. Recall that a thick (tensor) ideal $I$ in $\tilde{\mathcal{F}}$ is a subset of objects which is closed under tensor products with arbitrary objects and closed under direct summands.
A trace on $I$ is by definition a family of linear functions
\[t = \{t_V:\End_{\F}(V)\rightarrow k \}\]
where $V$ runs over all objects of $I$ such that following two conditions hold.
\begin{enumerate}
\item  If $U\in I$ and $W$ is an object of $\F$, then for any $f\in \End_{\F}(U\otimes W)$ we have
\[t_{U\otimes W}\left(f \right)=t_U \left( t_R(f)\right) \] for the right trace $tr_R()$.
\item  If $U,V\in I$ then for any morphisms $f:V\rightarrow U $ and $g:U\rightarrow V$ we have
\[t_V(g\circ f)=t_U(f \circ g).\]
\end{enumerate}

For such a trace on $I$ we define \[\dim^I (X) = t_X(id_X), \ X \in I,\] the modified dimension of $(I,t)$. For an object $J \in \F$ let $I_J$ be the thick ideal generated by $J$. By Kujawa \cite[Theorem 2.3.1]{Kujawa-generalized} the trace on the ideal $I_{L}$, $L$ irreducible, is unique up to multiplication by an element of $\mathbb{C}$.

\subsection{The generalized Kac-Wakimoto conjecture}

Let $I_k$ be the thick ideal generated by all irreducible representations of atypicality $k$. The ideal $I_0$ coincides with $Proj$. The following theorem was proven for $\mathfrak{gl}(m|n)$ by Serganova \cite{Skw} and for $\mathfrak{osp}(m|2n)$ by Kujawa \cite{Kujawa-generalized}.
We give a slightly different simplified proof. Moreover we explain how to compute these modified superdimensions.

\subsubsection{}
\begin{thm}{} (Generalized Kac-Wakimoto conjecture) The ideal $I_k$ admits a non-trivial modified trace function. For irreducible $L(\lambda)$ the associated dimension function
$\dim^k$ satisfies $\dim^k L(\lambda) \neq 0$ if and only if the atypicality of $L(\lambda)$ is $k$.
\end{thm}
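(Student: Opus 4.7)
The plan is to construct the modified trace on $I_k$ directly by transporting the canonical (super)trace on the projective ideal of $\Fin(\fg_x)$ back through the Duflo--Serganova functor, for $x$ of rank $k$. Concretely, given $V\in I_k$ and $f\in\End_{\fg}(V)$, set
\[
t^k_V(f):=\str\bigl(\DS_x(f)\bigr).
\]
Because $I_k$ is the thick ideal generated by modules of atypicality $\leq k$, and $\DS_x$ kills modules of atypicality strictly less than $\rank x$, the image $\DS_x(V)$ lands in the ideal of typical (hence projective) $\fg_x$-modules, on which the naive supertrace is known to define the modified trace $t^0$. Thus the right-hand side is well defined and independent of the specific $x$ of rank $k$ (only the rank matters, as in the supercharacter-level statement $\ds_j$).

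First I would verify the two axioms of a trace. The partial-trace axiom follows from the fact that $\DS_x$ is a symmetric monoidal functor, so $\DS_x(V\otimes W)=\DS_x(V)\otimes \DS_x(W)$ and $\DS_x$ intertwines the right partial trace: for $f\in\End_{\fg}(V\otimes W)$,
\[
t^k_V\bigl(\tr_R(f)\bigr)=\str\bigl(\DS_x(\tr_R(f))\bigr)=\str\bigl(\tr_R(\DS_x(f))\bigr)=\str(\DS_x(f))=t^k_{V\otimes W}(f),
\]
the penultimate equality being the tautological partial-trace identity for the ordinary supertrace on $\Fin(\fg_x)$. Cyclicity $t^k_V(g\circ f)=t^k_U(f\circ g)$ is inherited in the same manner from the cyclicity of $\str$ via functoriality of $\DS_x$. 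Non-triviality is clear once we compute the associated dimensions below. Uniqueness up to scalar on the sub-ideal generated by a single irreducible then follows from Kujawa's result.

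The dimension function is computed immediately:
\[
\dim^k L(\lambda)=t^k_{L(\lambda)}(\id)=\str\bigl(\id_{\DS_x(L(\lambda))}\bigr)=\sdim\,\DS_x(L(\lambda)).
\]
If $\at(L(\lambda))<k$ then $\DS_x(L(\lambda))=0$, so $\dim^k L(\lambda)=0$. If $\at(L(\lambda))=k$ then \Cor{image-of-ds} yields $\DS_x(L(\lambda))\cong\Pi^{||\howl(\lambda)||}(L^{\core})^{\oplus m(\lambda)}$, hence
\[
\dim^k L(\lambda)=(-1)^{||\howl(\lambda)||}\,m(\lambda)\,\sdim L^{\core}.
\]
Since $L^{\core}$ is a typical $\fg_x$-module, $\sdim L^{\core}\neq 0$ (e.g.\ by the Weyl--Kac dimension formula, or equivalently by Kac--Wakimoto in the typical case), and $m(\lambda)\geq 1$ as the number of increasing paths from $\lambda$ to a Kostant weight is positive (\Prop{Kostant}). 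This gives $\dim^k L(\lambda)\neq 0$ precisely when the atypicality equals $k$, proving both claims simultaneously.

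The main obstacle I foresee is purely bookkeeping: one must check that $\DS_x$ genuinely respects the categorical partial trace in a rigid tensor category (i.e.\ that it preserves the coevaluation/evaluation morphisms of a duality pairing), and that one can indeed choose $x$ so that $\DS_x(V)$ lies in $\Proj(\Fin(\fg_x))$ for every $V$ in the generating class of $I_k$, not just for simple $L(\lambda)$ of atypicality exactly $k$. Both follow from the symmetric monoidal structure on $\DS_x$ and the standard fact that typical modules are projective, but they are the points where care is needed.
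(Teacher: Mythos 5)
There is a genuine gap, and it sits at the heart of your construction. You define $t^k_V(f):=\str(\DS_x(f))$ and justify this by asserting that on the ideal of typical (projective) $\fg_x$-modules ``the naive supertrace is known to define the modified trace $t^0$.'' That assertion is false whenever $\fg_x$ has positive defect, i.e.\ whenever $k<\deff(\fg)$: a typical simple $\fg_x$-module has atypicality $0$, so by the Kac--Wakimoto/Serganova superdimension theorem (which you invoke elsewhere) $\sdim L^{\core}=0$ in exactly these cases; the Weyl--Kac formula you cite controls $\dim$, not $\sdim$. Consequently your dimension function is $\dim^k L(\lambda)=\sdim \DS_x(L(\lambda))=\sdim L(\lambda)$ (as $\DS_x$ preserves superdimension), which vanishes for every $L(\lambda)$ of atypicality $k<\deff(\fg)$ --- precisely the cases the theorem is meant to cover beyond maximal atypicality. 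In fact, by Kujawa's uniqueness of the trace on the ideal up to scalar, your $t^k$ is forced to be the trivial (zero) trace on $I_k$ for $k<\deff(\fg)$, so both the non-triviality claim and the ``atypicality $=k\Rightarrow\dim^k\neq 0$'' direction collapse; only the maximal-atypicality case (where $\fg_x$ is $\fgl_r$, $\fo_r$, $\fsp_r$ or $\osp(1|2r)$ and $\str$ is honest) survives.

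The overall strategy is otherwise the same as the paper's: the paper also sets $Tr^k:=Tr^0\circ \DS_k$ and uses that $\DS_k$ is symmetric monoidal and sends $I_k$ into the projective (typical) part of the target category. The essential ingredient you are missing is that $Tr^0$ must be the genuinely \emph{modified} trace on $Proj$ of $\cF'(\fg_x)$ --- whose existence is taken from Geer--Kujawa--Patureau-Mirand (\cite{GKPM2}, transported to the $OSp$ setting via Ehrig--Stroppel restriction as in Kujawa, or alternatively from \cite{Heidersdorf-Wenzl-Deligne}) and whose dimension function is nonzero on every typical simple --- rather than the restriction of $\str$. With that replacement your computation $Tr^k(\id_{L(\lambda)})=m(\lambda)\,Tr^0(\id_{\Pi^{||\howl(\lambda)||}L^{\core}})\neq 0$ goes through as in the paper, using $m(\lambda)\geq 1$ from the path-counting description; your verification of the trace axioms via monoidality of $\DS_x$ is fine and matches the paper's argument.
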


It was shown in \cite[Theorem 1.3.1]{Geer-Kujawa-Patureau-Mirand} that if an ideal $I$ carries a modified trace function, all indecomposable objects in $I$ are ambidextrous in the sense of \cite{Geer-Kujawa-Patureau-Mirand}. Since the $I_k$ define an exhaustive filtration of $\F$, the conjecture implies that every simple module in $\F$ is ambidextrous.

\subsection{A trace on $I_k$}

There are two different ways to see that $Proj \subset \F'$ carries a nontrivial trace function. It was proven in \cite[Theorem 4.8.2]{GKPM2} that $Proj \subset \F$ has such a trace function. This implies that $Proj \subset \F'$ has one as well using the restriction rules of Ehrig-Stroppel and the argument of \cite{Kujawa-generalized}.

Alternatively it follows from \cite{Heidersdorf-Wenzl-Deligne} that $Proj \subset  \F'$ carries such a trace function. Note that it is unique up to a scalar: Any $P \in Proj$ satisfies $<P> = Proj$. Indeed, $<P> \subset Proj$ is clear, and $Proj \subset P$ follows since $Proj$ is the smallest thick ideal \cite{CH}. We denote any normalization of this trace function by $Tr^0$.

\subsubsection{}
\begin{prop}{} The thick ideal $I_k \subset \F$ carries a nontrivial modified trace function $Tr^k$.
\end{prop}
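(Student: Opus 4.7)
The plan is to construct $Tr^k$ by pulling back the nontrivial trace $Tr^0$ on $Proj \subset \F(\fg_x)$ along the Duflo--Serganova functor $\DS_x$, for a fixed $x \in \fg_1$ with $[x,x]=0$ and $\rank(x) = k$.

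The main step is to verify that $\DS_x$ sends $I_k$ into $Proj \subset \F(\fg_x)$. For each generating irreducible $L(\lambda)$ of $I_k$ (i.e.\ of atypicality exactly $k$), Corollary \ref{image-of-ds} gives
\[
\DS_x(L(\lambda)) \;\cong\; \Pi^{i}\,(L^{core})^{\oplus m(\lambda)},
\]
and $L^{core}$ is typical and hence projective in $\F(\fg_x)$. Since $\DS_x$ is symmetric monoidal, $\DS_x(L(\lambda) \otimes W) \cong \DS_x(L(\lambda)) \otimes \DS_x(W) \in Proj$ (as $Proj$ is a tensor ideal), and projectivity passes to direct summands; hence $\DS_x(M) \in Proj$ for every $M \in I_k$.

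Given this, I would define
\[
Tr^k_M(f) \;:=\; Tr^0_{\DS_x(M)}\bigl(\DS_x(f)\bigr),
\qquad M \in I_k,\ f \in \End_{\F}(M).
\]
Cyclicity $Tr^k_V(g\circ f) = Tr^k_U(f \circ g)$ is then immediate from functoriality of $\DS_x$ and the corresponding cyclicity of $Tr^0$. For the partial-trace axiom, I would use that $\DS_x$ is symmetric monoidal: it preserves duals together with evaluation and coevaluation morphisms, hence intertwines the right partial trace, $\DS_x(tr_R(f)) = tr_R(\DS_x(f))$ for $f \in \End_{\F}(U \otimes W)$; the identity $Tr^k_{U \otimes W}(f) = Tr^k_U(tr_R(f))$ then reduces to the same identity for $Tr^0$. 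Nontriviality is finally checked on a Kostant weight $\lambda$ of atypicality exactly $k$: then $m(\lambda) = 1$, so $\DS_x(L(\lambda)) \cong \Pi^i L^{core}$, and
\[
Tr^k_{L(\lambda)}(\id_{L(\lambda)}) \;=\; \pm\, Tr^0_{L^{core}}(\id) \;\neq\; 0,
\]
since $L^{core}$ is a simple projective in $\F(\fg_x)$ on which the modified dimension is nonzero.

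The main obstacle I expect is the first step: verifying $\DS_x(M) \in Proj$ for every $M \in I_k$, not only for the generating irreducibles. This hinges on monoidality of $\DS_x$ and projectivity of $L^{core}$, but requires care for those summands of $L(\lambda) \otimes W$ whose atypicality is strictly less than $k$ -- on such modules $\DS_x$ may vanish depending on the choice of $x$, which is harmless for the trace axioms. The $OSp$-case $\F = \tilde{\mathcal{F}}'$ is handled analogously using the commuting diagram in \ref{OSP2m2n} relating $\DS_x$ with restriction from $OSp$ to $SOSp$; by Kujawa's uniqueness, any two choices of $x$ of rank $k$ will yield the same $Tr^k$ up to a scalar.
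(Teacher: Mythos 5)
Your proposal is correct and follows essentially the same route as the paper: define $Tr^k := Tr^0 \circ \DS_k$, use that $\DS_k$ is symmetric monoidal and sends $I_k$ into projectives (via $\DS_k(L(\lambda)) \cong \Pi^i (L^{core})^{\oplus m(\lambda)}$ with $L^{core}$ typical), and check nontriviality on an irreducible of atypicality $k$ using that the modified trace $Tr^0$ does not vanish on the typical module $L^{core}$. The only cosmetic difference is that you evaluate nontriviality on a Kostant weight where $m(\lambda)=1$, whereas the paper computes $Tr^k(\id_{L(\mu)}) = m(\lambda)\,Tr^0(\id_{\Pi^{\cdot}L^{core}}) \neq 0$ for an arbitrary irreducible of atypicality $k$; both are fine.
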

\begin{proof} Let $L(\lambda) \in \F$. Then $DS_x(L(\lambda)) \in \F'$ for all $x$ by \cite{GH}. Let $X \in I_k$ and $f \in End(X)$. Then we define \[ Tr^k(f) = Tr^0 (DS_k (f)).\] Then $DS_k(X)$ is typical and therefore projective. Since $DS_k$ is a symmetric monoidal functor this defines a trace function. We claim that it is nontrivial. For $X = L(\mu) $ we obtain \[ DS_k(f) \in End(\Pi^{||howl(\mu)||} 
( L^{core})^{\oplus m(\lambda)}). \] Since the parity is either even or odd and $Tr^0$ is nontrivial for any typical module, we compute for $f \in  End(X)$  \begin{align*} Tr^k (id_L) & = Tr^0_{DS_k(L)} (id_{DS_k(L)}) \\ & = m(\lambda) Tr^0_{\Pi^{||howl(\mu)||} L^{core}}(id_{\Pi^{||howl(\mu)||} L^{core}}) \neq 0.\end{align*} The same proof works for $L_{OSp}(\lambda)$.
\end{proof}

\subsubsection{}
\begin{rem}{} It can be shown \cite{Kujawa-generalized} that $I_k$ is in fact generated by an arbitrary irreducible representation of atypicality $k$. Therefore the above trace is the unique modified trace up to a scalar.
\end{rem}

Since $\DS_k(L) = 0$ for any $L$ of atypicality $< k$, we obtain for the modified superdimension $\sdim^k (X) := Tr^k(id_X)$

\subsubsection{}
\begin{cor}{} Let $L(\lambda)$ be a representation of atypicality $\leq k$. Then $\sdim^k (L(\lambda)) \neq 0$ if and only if $at(L(\lambda))  = k$.
\end{cor}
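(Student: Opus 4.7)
My plan is to prove the two implications separately, using the defining formula $\sdim^k(X) = Tr^0(id_{DS_k(X)})$.

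For the ``if'' direction, assume $\at(L(\lambda)) = k$. This is essentially what was computed in the proof of the preceding proposition: by \Cor{image-of-ds} (or \Thm{thmcut}), $DS_k(L(\lambda)) \cong \Pi^{||\howl(\lambda)||}(L^{core})^{\oplus m(\lambda)}$ is a non-zero isotypic sum of copies (up to a parity shift) of the typical, hence projective, $\fg_x$-module $L^{core}$, and therefore
$$\sdim^k(L(\lambda)) = m(\lambda)\,Tr^0\bigl(id_{\Pi^{||\howl(\lambda)||}L^{core}}\bigr) \neq 0$$
by non-degeneracy of $Tr^0$ on any typical indecomposable.

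For the ``only if'' direction, assume $\at(L(\lambda)) = a < k$. I would combine the supercharacter formula of \Cor{blambda},
$$\sch L(\lambda) = \sum_{\mu \in \Lambda^+_{m|n}}(-1)^{p(\lambda-\mu)+||\lambda||-||\mu||} d^{\lambda,\mu}_< \cE^-_\mu,$$
with \Thm{thmcut}. Every $\mu$ that appears lies in the same block as $\lambda$, so $\at(\mu) = a$, and hence $\tail(\mu) \leq \at(\mu) = a < k$; \Thm{thmcut} then gives $\ds_k(\cE^-_\mu) = 0$ for each such $\mu$, so
$$\sch DS_k(L(\lambda)) = \ds_k(\sch L(\lambda)) = 0.$$

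The main obstacle is the final step, deducing $\sdim^k(L(\lambda)) = 0$ from this supercharacter vanishing. Since $rk(x) > \at(\chi_\lambda)$, every composition factor of $DS_k(L(\lambda))$ has typical central character --- the restriction of $\chi_\lambda$ to $\fh_x$ admits no surviving iso-direction once $k > a$ iso-roots have been used to form $x$ --- and is therefore projective in $\cF(\fg_x)$. By uniqueness of the modified trace on $Proj(\fg_x)$ (\cite{Kujawa-generalized}, Thm.~2.3.1), $Tr^0$ restricted to typical projectives is proportional to the ordinary superdimension and so factors through the supercharacter; hence $\sch DS_k(L(\lambda)) = 0$ forces $\sdim^k(L(\lambda)) = 0$, completing the proof.
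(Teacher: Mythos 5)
Your ``if'' half is exactly the paper's argument (via \Cor{image-of-ds} and the nontriviality of $Tr^0$ on the typical module $L^{core}$), and your derivation of $\sch \DS_k(L(\lambda))=0$ from \Cor{blambda} and \Thm{thmcut} is correct. The gap is in the final step of the ``only if'' half. The claim that, by uniqueness of the modified trace on $Proj(\fg_x)$, ``$Tr^0$ restricted to typical projectives is proportional to the ordinary superdimension'' is false whenever $k$ is strictly smaller than the defect of $\fg$: in that case $\fg_x$ still has positive defect, so every projective $\fg_x$-module (in particular every typical simple) has $\sdim=0$, while $Tr^0$ is nontrivial on typicals --- indeed, if $Tr^0$ were a multiple of $\sdim$ it would vanish on $L^{core}$, contradicting the nonvanishing you use in your ``if'' direction. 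Uniqueness only tells you that $\sdim$, being itself a trace on $Proj$, is a (possibly zero) multiple of $Tr^0$, which yields nothing here. So, as written, the passage from $\sch\DS_k(L(\lambda))=0$ to $\sdim^k(L(\lambda))=0$ is not justified.

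The step can be repaired, but not via proportionality: granting the core-preservation property of $\DS$ that you invoke, all composition factors of $\DS_k(L(\lambda))$ lie in typical, hence semisimple, blocks of $\cF(\fg_x)$, so the module is a finite direct sum $\bigoplus_i \bigl(L_i^{\oplus n_i}\oplus \Pi L_i^{\oplus m_i}\bigr)$ of typical simples and their parity shifts; $Tr^0$ is additive and satisfies $Tr^0(id_{\Pi N})=-Tr^0(id_N)$, so $Tr^0$ of the identity equals $\sum_i (n_i-m_i)\,Tr^0(id_{L_i})$, and the integers $n_i-m_i$ are determined by the supercharacter since supercharacters of pairwise non-isomorphic simples are linearly independent; hence the supercharacter vanishing does force $\sdim^k(L(\lambda))=0$. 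Note, however, that the very core argument you sketch shows more: when $k>\at(\lambda)$ there is no $\fg_x$-central character compatible with $\chi_{\lambda}$ at all, i.e. $\DS_k(L(\lambda))=0$ outright, which is precisely the one-line proof the paper uses ($\sdim^k(L(\lambda))=Tr^0(id_0)=0$); the supercharacter detour is therefore unnecessary once that fact is available.
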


\appendix

\section{Kac-Wakimoto terms and the rings $\cR,\cR_{\Sigma'}$}
\label{sec:RR}

In this section $\fg$ is $\fgl(m|n),\osp(M|N)$ or one of the exceptional
Lie superalgebras $F(4), G(3), D(2|1,a)$. We use the standard notation for the roots of $\fg_0$ and denote by
$\Pi_0$  a standard set of simple roots. In what follows 
we consider only bases $\Sigma$ of $\Delta$ which are compatible with $\Pi_0$, 
that is $\Delta^+(\Sigma)_0=\Delta^+(\Pi_0)$. 
By~\cite{Sgrs}, all such bases are connected by chains of odd reflections. In the $\fgl$ and $\osp$-cases these bases can be encoded by
words consisting of $m$ letters $\vareps$ and $n$ letters
$\delta$.

\subsection{Notation}\label{JW}
We denote by $W$  the Weyl group 
of $\fg_0$. We denote by $\Delta$ the set of roots of $\fg$ and set
$$\begin{array}{l}
\fh_{int}^*:=\{\lambda\in\fh^*|\ \forall w\in W\ \  \lambda-w\lambda\in\mathbb{Z}\Delta\},\\
P(\fg_0):=\{\lambda\in \fh^*|\ \forall w\in W\ \  \lambda-w\lambda\in\mathbb{Z}\Delta_0\}.\end{array}$$

For each non-isotropic root $\alpha$ let $r_{\alpha}\in W$ be the
 reflection  with respect to $\alpha$. 
For any subset $Y\subset W$ we denote by $\jJ_Y$ the linear operator
$P\mapsto \displaystyle\sum_{w\in Y} \sgn(w) w(P)$, where
$\sgn: W\to \mathbb{Z}_2$ is the standard sign homorphism 
(given by $\sgn r_{\alpha}=-1$).

\subsubsection{Choice of the Weyl vector}\label{Weylvector} 
We denote by $\rho_0$ a Weyl vector of $\fg_0$ which is 
an element of $\fh^*$ satisfying 
$$r_{\alpha}\rho_0=\rho_0-\alpha$$ for each 
$\alpha\in\Pi_0$.  Note that
$\rho_0$ is unique if $\Delta_0$ spans $\fh^*$, i.e. for $\fg\not=\fgl(m|n),\osp(2|2n)$. We choose the Weyl vector $\rho$ by the rule
$$\rho:=\rho_0-\rho_1,\ \ \rho_1=\frac{1}{2}\sum_{\alpha\in\Delta^+_1}\alpha.$$
If $\beta\in\Sigma$ is isotropic and
$\Sigma'=r_{\beta}\Sigma$ we have $\rho':=\rho+\beta$. Using~\cite{Sint}
(or a short case-by-case reasoning)
we obtain $\rho\in\fh^*_{int}$. We introduce
$$R_0:=\prod_{\alpha\in\Delta^+_0}(1-e^{-\alpha}),\ \
R_1(\Sigma):=\prod_{\alpha\in\Delta^+_1(\Sigma)}(1+e^{-\alpha}),\ \ R(\Sigma):=\frac{R_0}{R_1(\Sigma)}.$$
Note that the  term
$$e^{\rho_0-\rho}R_1(\Sigma)=\prod_{\alpha\in\Delta^+_1(\Sigma)}(e^{\alpha/2}+e^{-\alpha/2})$$
is $W$-invariant and does not depend on the choice of $\Sigma$. Hence
for each $\Sigma'$ satisfying $\Delta^+_0\subset \Delta^+(\Sigma)$ we have
$$R(\Sigma')e^{\rho'}=Re^{\rho},\ \ \text{ where }R:=R(\Sigma).$$

\subsection{Rings $\cR$ and $\cR_{\Sigma}$}\label{cR}
For a sum of the form $\sum_{\nu\in\fh^*} a_{\nu}e^{\nu}$ with $a_{\nu}\in\mathbb{Q}$
we define the support by the formula
$$\supp (\sum a_{\nu}e^{\nu})=\{\nu\in\fh^*|\ a_{\nu}\not=0\}.$$

Let $\cR_{\Sigma}$ be the set constisting of the sums $\sum_{\nu\in\fh^*} a_{\nu}e^{\nu}$ with $a_{\nu}\in\mathbb{Q}$ and such that
$$\supp (\sum a_{\nu}e^{\nu})\subset \cup_{i=1}^k (\nu_i-\mathbb{N}\Sigma')$$
for some $k$. Clearly, $\cR_{\Sigma}$ is a ring.
This ring contains $\ch N$ and $\sch N$ for any $N$ in the BGG-category
$\CO$.

\subsubsection{}
Denote by $\cR$ the ring of rational functions
of the form $\frac{P}{Q}$, where $P$ lies in the group ring 
$\mathbb{Q}[\fh^*]$ and $Q$ is  a product of the factors
of the form $1\pm e^{-\alpha}$ for $\alpha\in\Delta$. Using the formula
$$1\pm e^{-\alpha}=1\mp e^{-\alpha}+e^{-2\alpha}\mp e^{-3\alpha}+\ldots$$
we will view the element $\frac{P}{Q}\in\cR$ as  a series $\cR_{\Sigma}$; we will call this series the $\Sigma$-expansion of $\frac{P}{Q}$.
For instance, $R(\Sigma'),R(\Sigma')^{-1}\in\cR$ for any base $\Sigma'$ 
and $\Sigma$-expansion
of $R(\Sigma')^{-1}$  is equal to the character 
of a Verma module of the highest weight $0$ (defined with respect to 
the base $\Sigma$).

\subsubsection{}
\begin{lem}{Delta1R}
For any base $\Sigma'$ satisfying $\Delta^+(\Sigma')_0=\Delta^+(\Sigma)$ one has
$$\jJ_W\bigl(\frac{e^{\rho'}}{\prod_{\alpha\in\Delta^+_1(\Sigma')}(1+e^{-\alpha})}\bigr)=Re^{\rho}.$$
\end{lem}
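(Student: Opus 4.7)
The plan is to reduce the identity to the classical Weyl denominator identity for $\fg_0$, namely $\jJ_W(e^{\rho_0})=R_0 e^{\rho_0}$. The essential observation is already recorded in \S\ref{Weylvector}: the product
\[
e^{\rho_0-\rho'}R_1(\Sigma')\;=\;e^{\rho_1(\Sigma')}R_1(\Sigma')\;=\;\prod_{\alpha\in\Delta^+_1(\Sigma')}(e^{\alpha/2}+e^{-\alpha/2})
\]
is $W$-invariant (each factor is invariant under $\alpha\mapsto-\alpha$, and $W$ permutes $\Delta_1$) and actually independent of the choice of $\Sigma'$ compatible with $\Pi_0$.

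First I would factor the integrand so as to isolate the $W$-invariant piece:
\[
\frac{e^{\rho'}}{R_1(\Sigma')}\;=\;\frac{e^{\rho_0}}{e^{\rho_1(\Sigma')}R_1(\Sigma')}.
\]
Since $\jJ_W$ is linear over the ring of $W$-invariants, the denominator can be pulled out:
\[
\jJ_W\!\Bigl(\frac{e^{\rho'}}{R_1(\Sigma')}\Bigr)\;=\;\frac{\jJ_W(e^{\rho_0})}{e^{\rho_1(\Sigma')}R_1(\Sigma')}.
\]

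Applying the classical Weyl denominator identity for $\fg_0$, $\jJ_W(e^{\rho_0})=R_0\,e^{\rho_0}$, and multiplying back in, one obtains
\[
\jJ_W\!\Bigl(\frac{e^{\rho'}}{R_1(\Sigma')}\Bigr)\;=\;\frac{R_0\,e^{\rho_0}}{e^{\rho_1(\Sigma')}R_1(\Sigma')}\;=\;\frac{R_0}{R_1(\Sigma')}\,e^{\rho'}\;=\;R(\Sigma')e^{\rho'}.
\]
Finally, the identity $R(\Sigma')e^{\rho'}=Re^{\rho}$ recorded at the end of \S\ref{Weylvector} yields the claim.

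There is no real obstacle: the whole argument is a single application of the classical Weyl denominator identity, once the $W$-invariance of $e^{\rho_1(\Sigma')}R_1(\Sigma')$ noted in \S\ref{Weylvector} is used to commute $\jJ_W$ past the odd part of the denominator. The only small point to check is that the $\Sigma$-expansions are being manipulated in $\cR$ (so that dividing by the $W$-invariant product $\prod_{\alpha\in\Delta^+_1(\Sigma')}(e^{\alpha/2}+e^{-\alpha/2})$ is legitimate), which follows from the definition of $\cR$ in \S\ref{cR}; alternatively one may verify the identity directly in $\cR$ and then re-expand in $\cR_{\Sigma}$.
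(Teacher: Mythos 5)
Your argument is correct and coincides with the paper's own proof: both pull the $W$-invariant factor $e^{\rho_0-\rho'}R_1(\Sigma')$ out of $\jJ_W$, apply the Weyl character formula for the trivial $\fg_0$-module to get $\jJ_W(e^{\rho_0})=R_0e^{\rho_0}$, and conclude via $R(\Sigma')e^{\rho'}=Re^{\rho}$. No gaps; the remark about working in $\cR$ is a fine (if optional) precaution.
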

\begin{proof}
By above, $e^{\rho_0-\rho'}R_1(\Sigma')$
is $W$-invariant so
$$\jJ_W\bigl(\frac{e^{\rho'}}{\prod_{\alpha\in\Delta^+_1(\Sigma')}(1+e^{-\alpha})}\bigr)=\jJ_W\bigl(\frac{e^{\rho'}}{R_1(\Sigma')}\bigr)=\frac{\jJ_W(e^{\rho_0})}{e^{\rho_0-\rho'}R_1(\Sigma')}\ .$$
The Weyl character formula for the trivial $\fg_0$-module gives $\jJ_W(e^{\rho_0})=R_0e^{\rho_0}$. Using the above identity
$R(\Sigma')e^{\rho'}=Re^{\rho}$ we obtain the required formula.
\end{proof}

\subsection{Projection $P_{\chi}$}
\label{trans}
 Let $\CO^{\chi}$ be the full subcategory of the category $\CO$ corresponding to a central character $\chi$. For $N\in\CO$ let $N^{\chi}$ be the projection of $N$ to $\CO{\chi}$. The character of $N^{\chi}$ can be expressed via $\ch N$ by the following procedure.

By above, $\mathcal{R}_{\Sigma'}$ contains the terms $\ch N, Re^{\rho}\ch N$ 
for any module $N\in \CO$. It is well-known that 
for $N\in\CO^{\chi}$ the $\Sigma'$-expansion of 
$Re^{\rho}\ch N$ satisfies
$$\supp (Re^{\rho}\ch N)\subset \{\mu+\rho|\ \chi_{\mu}=\chi_{\lambda}\}.$$
Introducing a projection $P_{\chi}: \cR_{\Sigma'}\to\cR_{\Sigma'}$   by
$\ P_{\chi}\bigl(\sum a_{\mu}e^{\mu}\bigr)=\displaystyle\sum_{\mu: \chi_{\mu-\rho}=\chi} a_{\mu}e^{\mu}$
we get
\begin{equation}\label{Pchi}
Re^{\rho}\ch N^{\chi}=P_{\chi}(Re^{\rho}\ch N).
\end{equation}

\subsubsection{}\label{ThetaV}
For a finite-dimensional module $V$
a translation functor $T_{\chi,\chi'}^V: \CO^{\chi}\to \CO^{\chi'}$ is given by
$T_{\chi,\chi'}^V(N):=(N\otimes V)^{\chi'}$. 
By above,
$$Re^{\rho} \ch (T_{\chi,\chi'}^V(N))=\Theta_{\chi,\chi'}^V(Re^{\rho}\ch N),$$
where $\Theta_{\chi,\chi'}^V:\ \mathcal{R}_{\Sigma'}\to \mathcal{R}_{\Sigma'}$
is given by
$$\Theta_{\chi,\chi'}^V(\sum a_{\mu}e^{\mu}):=P_{\chi'}\bigl(\ch V\cdot P_{\chi}(\sum a_{\mu}e^{\mu})\bigr).$$

\subsection{The terms $\KW(\lambda,S)$}\label{notatcut}
We say that a  subset $S\subset\Delta_1$ is an {\em iso-set}
if  $S$ is a basis of an isotropic subspace of $\fh^*$, i.e.
$S$ is linearly independent and $(S|S)=0$.

For $\lambda\in\fh^*_{int}$ and an iso-set $S\subset\Delta_1$ satisfying $(\lambda|S)=0$
we set
$$\KW(\lambda,S):=\jJ_W\bigl(\frac{e^{\lambda}}{\displaystyle\prod_{\beta\in S}(1+e^{-\beta})}\bigr).$$

\subsubsection{Remark}
For an arbitrary weight $\lambda\in\fh^*$ the group $W$ should be substituted 
by the ``$\lambda$-integral'' subgroup, see~\cite{GK}, Section 11.)

\subsubsection{}
Note that $\ \KW(\lambda,S)\cdot\displaystyle\prod_{\alpha\in\Delta^+_1}(1+e^{-\alpha})
\in S(\fh^*)$, so $\KW(\lambda,S)\in\cR$.
One readily sees that for the $\Sigma$-expansion of $\KW(\nu,S)$ we have
$$\supp \KW(\nu,S)\subset W(\nu+\mathbb{Z}S).$$
By~\cite{Sinv},\cite{Kcentre}, $\chi_{\mu-\rho}=\chi_{\nu-\rho}$
for each  $\mu\in \nu+\mathbb{Z}S$. Thus for $P_{\chi}$ introduced in~\ref{trans}
we have

\begin{equation}\label{PchiKW}
P_{\chi}(\KW(\lambda+\rho,S))=\delta_{\chi,\chi_{\lambda}}\KW(\lambda+\rho,S).
\end{equation}

\subsubsection{}
For $\fg=\fgl(m|n),\osp(M|N)$ 
one has $S=\{\pm \vareps_{p_i}\pm \delta_{q_i}\}_{i=1}^t$, where $p_i\not=p_j$, $q_i\not=q_j$ for
$i\not=j$. We denote the intersection of $\mathbb{Z}\Delta$ with the span of $\vareps_{p_1},\ldots,\vareps_{p_t},
\delta_{q_1},\ldots,\delta_{q_t}$ by $\fh(S)^*$. Notice that
 $S$ spans a maximal isotropic subspace in $\fh(S)^*$.

\subsubsection{}
\begin{lem}{lemprlambdaS}
(i) $w\KW(\lambda,S)=\sgn(w) \KW(\lambda,S)=
\KW(w\lambda,wS)$;

(ii) $\KW(\lambda,S)=0$ if there exists $\alpha\in\Delta_0$ such that
$(\alpha|S)=(\alpha|\lambda)=0$;

(iii)  $\KW(\lambda-\beta,S)=\KW(\lambda,(S\cup\{-\beta\})\setminus\{\beta\})$
for each $\beta\in S$;

 (iv) in the $\osp$-case if $(\lambda|\fh(S)^*)=0$, then
 $$\KW(\lambda-\beta,S)=-\KW(\lambda,S)\ \text{ for each }\beta\in S.$$
 \end{lem}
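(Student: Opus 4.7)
The plan is to treat (i)--(iii) as direct formal manipulations of $\jJ_W$ and the rational function $e^\lambda/\prod_{\beta\in S}(1+e^{-\beta})$, and then to reduce (iv) to (ii) via a short algebraic identity. For (i), reindexing the sum over $W$ gives $v\cdot\jJ_W(P)=\sgn(v)\jJ_W(P)=\jJ_W(vP)$ for any formal series $P$, and the substitution $\beta'=w\beta$ in the product $\prod_{\beta\in S}(1+e^{-\beta})$ yields all three equalities at once. For (ii), I will observe that $r_\alpha\lambda=\lambda$ and $r_\alpha\beta=\beta$ for every $\beta\in S$ by the orthogonality hypotheses, so (i) gives $\KW(\lambda,S)=\KW(r_\alpha\lambda,r_\alpha S)=\sgn(r_\alpha)\KW(\lambda,S)=-\KW(\lambda,S)$, forcing $\KW(\lambda,S)=0$. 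For (iii), writing $1+e^{\beta}=e^{\beta}(1+e^{-\beta})$ moves the offending factor into the numerator and converts $\KW(\lambda,(S\setminus\{\beta\})\cup\{-\beta\})$ into $\KW(\lambda-\beta,S)$.

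For (iv), the strategy is to rephrase the claim via (iii) and reduce it to an instance of (ii). Setting $S':=(S\setminus\{\beta\})\cup\{-\beta\}$, part (iii) gives $\KW(\lambda-\beta,S)=\KW(\lambda,S')$, so the assertion becomes $\KW(\lambda,S)+\KW(\lambda,S')=0$. Combining the two fractions over the common denominator $\prod_{\gamma\in S}(1+e^{-\gamma})$ and using $e^\lambda+e^{\lambda-\beta}=e^\lambda(1+e^{-\beta})$, one obtains
\[
\KW(\lambda,S)+\KW(\lambda,S')=\jJ_W\!\left(\frac{e^\lambda(1+e^{-\beta})}{\prod_{\gamma\in S}(1+e^{-\gamma})}\right)=\KW(\lambda,S\setminus\{\beta\}),
\]
so it suffices to prove $\KW(\lambda,S\setminus\{\beta\})=0$ under the hypothesis $(\lambda|\fh(S)^*)=0$.

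The main obstacle is producing the even root $\alpha$ required by (ii): it must lie in $\Delta_0$, be orthogonal to $S\setminus\{\beta\}$, and annihilate $\lambda$. Writing $\beta=\pm\vareps_p\pm\delta_q$, I will take $\alpha:=2\delta_q$, a long root of $\mathfrak{sp}(2n)\subset\osp$ in every $\osp$-subcase (so the argument does not have to branch on $\osp(2m|2n)$ versus $\osp(2m+1|2n)$, nor to deal with the $D_m$ root system on the orthogonal factor). Since $S$ is an iso-set, the $\delta$-indices of its elements are pairwise distinct, so $\delta_q$ appears in no element of $S\setminus\{\beta\}$ and $(\alpha|S\setminus\{\beta\})=0$; and $(\alpha|\lambda)=0$ because $\delta_q\in\fh(S)^*$ by the definition of $\fh(S)^*$. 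Applying (ii) to this $\alpha$ yields $\KW(\lambda,S\setminus\{\beta\})=0$, completing the proof.
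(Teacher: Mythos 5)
Your proposal is correct and follows essentially the same route as the paper: parts (i)--(iii) are the same formal manipulations (with (ii) obtained from (i) via $w=r_\alpha$), and for (iv) you combine the two terms into $\KW(\lambda,S\setminus\{\beta\})$ using $e^{\lambda}+e^{\lambda-\beta}=e^{\lambda}(1+e^{-\beta})$ and kill it by (ii) with the even root $2\delta_q$, which is exactly the paper's argument (where the citation of "(iii)" at that point is a typo for (ii)). One harmless imprecision: for $\osp(2m|2n)$ the weight $\delta_q$ itself need not lie in $\fh(S)^*$ (it is not in $\mathbb{Z}\Delta$), but $\alpha=2\delta_q$ does, and that already gives $(\lambda|\alpha)=0$, so the conclusion stands.
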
\begin{proof}
(i), (iii) are straightforward and
  (ii) follows from (i) for $w:=r_{\alpha}$.
 For (iv) note that
 $$\KW(\lambda,S)+\KW(\lambda-\beta,S)=\jJ_W \bigl(\frac{e^{\lambda}+e^{\lambda-\beta}}{\prod_{\beta\in S}(1+e^{-\beta})}\bigr)=
\KW(\lambda,S\setminus\{\beta\}).$$
Since
 $\beta=\pm\vareps_i\pm\delta_j$ for some $i,j$
we have  $(\lambda|\delta_j)=
 (S\setminus\{\beta\}|\delta_j)=0$ so (iii) gives
$$\KW(\lambda;S\setminus\{\beta\})=0$$
 as required.
 \end{proof}

\subsubsection{Denominator identity}\label{denom}
Let $S$ be an iso-set of the cardinality $\min(m,n)$ and let
$\Sigma'$ be a base of $\Delta$ containing $S$ (for instance,
$\Sigma'=\Sigma$ for $\osp$-case and
$\Sigma'$ corresponding to $(\vareps\delta)^{m}\delta^{n-m}$ for $n\geq m$).
By~\cite{G1} one has $\KW(\rho', S)=jRe^{\rho}$, where $j$ is a certain integer
($j$ is the order of the ``smallest factor'' in $W$, for instance,
$j=m!$ for $\fgl(m|n)$ with $m\leq n$).

Consider the case $\fg=\fgl(s|s)$ or $\osp(2s+t|2s)$. Then 
$j=s!$ for $\fgl(s|s)$, $j=\max(2^{s-1} s!,1)$ for $\osp(2s|2s)$, and
 $j=2^s s!$ for $\osp(2s+t|2s)$ with $t=1,2$. Let
 $\Sigma'$  be the base corresponding  to the word
$(\vareps\delta)^s$; this base contains an iso-set
$\{\vareps_{i}-\delta_{i}\}_{i=1}^{s}$.
Note that $w\rho'=\rho'$ for any $w\in S_s\times S_s$;
using~\Lem{lemprlambdaS} (i) we obtain
$$jRe^{\rho}=\KW(\rho', \{\vareps_{i}-\delta_{i}\}_{i=1}^{s})=(-1)^{[\frac{s}{2}]}
\KW(\rho', \{\vareps_{i}-\delta_{s+1-i}\}_{i=1}^{s}).$$

%

\subsection{The term $\frac{\KW(\lambda,S)}{Re^{\rho}}$}\label{KWR}
Recall that $L_{\fg_0}(\lambda-\rho_0)$ is finite-dimensional if and only if $\lambda\in  P^{++}(\fg_0)$, where
$$P^{++}(\fg_0):=
\{\lambda\in P(\fg_0)| \ \forall w\in W\ \lambda-w\lambda\in\mathbb{Z}_{\geq 0}\Delta^+,\ \lambda\not=w\lambda\}.$$

The character ring $\Ch(\fg_0)$  has a basis
$\{\ch L_{\fg_0}(\lambda-\rho_0)\}_{\lambda\in P^{++}(\fg_0)}$.
This allows to extend $\dim$ to the  linear map $\dim: \Ch(\fg_0)\to \mathbb{Z}$ having
$$\dim (\ch N)=\dim N$$ for any finite-dimensional module $N$.

The Weyl character and the Weyl dimension formulas give the following.

\subsubsection{}
\begin{lem}{}
Take $\lambda\in P(\fg_0)$. One has
\begin{enumerate} 
\item $\jJ_W(e^{\lambda})=0$ if and only if 
$\lambda\not\in W P^{++}(\fg_0)$;

\item
 $\frac{\jJ_W(e^{\lambda})}{R_0e^{\rho_0}}\in  \Ch(\fg_0)$;

\item 
$\dim \bigl(\frac{\jJ_W(e^{\lambda})}{R_0e^{\rho_0}}\bigr)=\prod_{\alpha\in\Delta^+_0}\frac{(\lambda|\alpha)}{(\rho_0|\alpha)}$.
\end{enumerate}
\end{lem}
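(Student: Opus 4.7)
The plan is to reduce everything to the classical Weyl character and dimension formulas for $\fg_0$ by separating the two cases $\lambda\in W P^{++}(\fg_0)$ and $\lambda\notin W P^{++}(\fg_0)$.

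First, I would handle (i). For the ``$\Leftarrow$'' direction, note that $\lambda\notin WP^{++}(\fg_0)$ is equivalent to the stabilizer $W_{\lambda}$ being nontrivial; equivalently, there is some $\alpha\in\Delta_0^+$ with $r_{\alpha}\lambda=\lambda$ (here one uses that $P^{++}(\fg_0)$ is a fundamental domain for the $W$-action on the regular weights in $P(\fg_0)$). Splitting $W$ into cosets modulo $\langle r_{\alpha}\rangle$ and noting $e^{\lambda}-e^{r_{\alpha}\lambda}=0$ gives $\jJ_W(e^{\lambda})=0$. For the ``$\Rightarrow$'' direction, write $\lambda=w\mu$ with $\mu\in P^{++}(\fg_0)$; then by the Weyl character formula applied to $L_{\fg_0}(\mu-\rho_0)$,
$$\jJ_W(e^{\lambda})=\sgn(w)\,\jJ_W(e^{\mu})=\sgn(w)\,R_0e^{\rho_0}\ch L_{\fg_0}(\mu-\rho_0),$$
which is nonzero since $\mu-\rho_0$ is dominant integral.

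Part (ii) follows from the same identity: in the case $\lambda=w\mu\in WP^{++}(\fg_0)$ we get $\jJ_W(e^{\lambda})/(R_0e^{\rho_0})=\sgn(w)\,\ch L_{\fg_0}(\mu-\rho_0)\in\Ch(\fg_0)$, and in the other case both sides are $0$ by (i).

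For (iii), again split into cases. If $\lambda\notin WP^{++}(\fg_0)$, then some $r_{\alpha}$ with $\alpha\in\Delta_0^+$ fixes $\lambda$, i.e.\ $(\lambda|\alpha)=0$, so the product $\prod_{\alpha\in\Delta_0^+}(\lambda|\alpha)$ vanishes and matches the LHS, which is zero. If $\lambda=w\mu$ with $\mu\in P^{++}(\fg_0)$, the Weyl dimension formula gives
$$\dim L_{\fg_0}(\mu-\rho_0)=\prod_{\alpha\in\Delta_0^+}\frac{(\mu|\alpha)}{(\rho_0|\alpha)},$$
and since $w^{-1}$ permutes $\Delta_0^+$ with exactly $\ell(w)$ sign changes, one has $\prod_{\alpha\in\Delta_0^+}(w\mu|\alpha)=\sgn(w)\prod_{\alpha\in\Delta_0^+}(\mu|\alpha)$. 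Combined with (ii), this yields (iii).

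The whole lemma is essentially a bookkeeping exercise: the only nontrivial inputs are the Weyl character formula, the Weyl dimension formula, and the fundamental domain property of $P^{++}(\fg_0)$. I do not anticipate any genuine obstacles; the only mild subtlety is matching signs and the case where the product $\prod(\lambda|\alpha)$ vanishes, but this is precisely captured by the stabilizer characterization used in (i).
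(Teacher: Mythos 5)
Your proof is correct and follows essentially the same route as the paper: reduce to the two cases $\lambda\notin WP^{++}(\fg_0)$ (where a fixing reflection kills $\jJ_W(e^{\lambda})$ and the product vanishes) and $\lambda=w\nu$ with $\nu\in P^{++}(\fg_0)$, then invoke the Weyl character and dimension formulas and track the sign $\sgn(w)$. The only difference is cosmetic: you absorb the sign by noting $\prod_{\alpha\in\Delta_0^+}(w\nu|\alpha)=\sgn(w)\prod_{\alpha\in\Delta_0^+}(\nu|\alpha)$, while the paper re-indexes the product over $w^{-1}\Delta_0^+$ and counts the roots made negative — the same computation.
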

\begin{proof}
If $\lambda\not\in W P^{++}(\fg_0)$, then $r_{\alpha}\lambda=\lambda$ for some 
$\alpha\in\Delta^+_0$ and thus $\jJ_W(e^{\lambda})=0$ and both sides of (iii)
are equal to zero. Now take $\lambda\in W P^{++}(\fg_0)$, that is
 $\lambda=w\nu$ for $\nu\in  P^{++}(\fg_0)$. Then $\jJ_W(e^{\lambda})=\sgn(w)\jJ_W(e^{\nu})$. Using the Weyl character 
formula we get
$$ (R_0e^{\rho_0})^{-1}\jJ_W(e^{\lambda})=\sgn(w) (R_0e^{\rho_0})^{-1}\jJ_W(e^{\nu})
=\sgn(w)\ch L(\nu-\rho_0)$$
which establishes (ii).
The Weyl dimension formula  gives
$$\dim \bigl(\frac{\jJ_W(e^{\lambda})}{R_0e^{\rho_0}}\bigr)=\sgn(w)\dim  L(\nu-\rho_0)=
\sgn(w) \prod_{\alpha\in\Delta^+_0}\frac{(\nu|\alpha)}{(\rho_0|\alpha)}=\sgn(w)\prod_{\alpha\in w^{-1}\Delta^+_0}\frac{(\lambda|\alpha)}{(\rho_0|\alpha)}.$$
One has $(-1)^{\#\{\alpha\in w^{-1}\Delta^+_0\cap (-\Delta^+_0)\}}=\sgn(w)$; 
this gives (iii) for $\lambda\in WP^{++}(\fg_0)$.
\end{proof}

\subsubsection{} 
For a subset $U\subset\Delta$ we will use the notation
$$\ssum(U):=\sum_{\beta\in U} \beta.$$

Observe that all weights of  a finite-dimensional $\fg$-module lie in $P(\fg_0)$. 

Take $\lambda\in P(\fg_0)+\rho$. Recall that $\rho_0-\rho=\rho_1=\frac{1}{2}\sum_{\alpha\in\Delta^+_1}\alpha$. One has  
$$\frac{\KW(\lambda,S)}{Re^{\rho}}
=\frac{\jJ_W\bigl(e^{\lambda+\rho_1}
\displaystyle\prod_{\beta\in\Delta^+_1\setminus S}(1+e^{-\beta})\bigr)}{R_0e^{\rho_0}}=\sum_{U\subset\Delta^+_1\setminus S}\frac{\jJ_W\bigl(e^{\lambda+\rho_1-\ssum(U)}\bigr)}{R_0e^{\rho_0}}.$$

\subsubsection{}
\begin{cor}{cor123}
For each $\lambda\in P(\fg_0)$ the term $\frac{\KW(\lambda,S)}{Re^{\rho}}$
 lies in $\Ch(\fg_0)$ and
$$\dim \bigl(\frac{\KW(\lambda+\rho,S)}{Re^{\rho}}\bigr)=
\sum_{U\subset\Delta_1^+\setminus S}\prod_{\alpha\in\Delta^+_0}\frac{(\lambda+\rho_0-\ssum(U)|\alpha)}
{(\rho_0|\alpha)}.$$
Moreover, 
$$\frac{\KW(\lambda+\rho,S)}{Re^{\rho}}=\sum n_{\lambda\mu} \ch L_{\fg_0}(\mu)$$
with the coefficients given by
$$n_{\lambda\mu}=\sum_{U\subset\Delta^+_1\setminus S} \sum_{w\in W} \sgn(w)\delta_{w(\mu+\rho_0),\lambda+\rho_0-\ssum(U)}.$$
\end{cor}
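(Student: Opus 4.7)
The plan is to substitute $\lambda \mapsto \lambda+\rho$ in the identity displayed immediately before the corollary and then apply the preceding lemma termwise. Since $\rho+\rho_1=\rho_0$, that identity becomes
$$\frac{\KW(\lambda+\rho,S)}{Re^{\rho}} \;=\; \sum_{U\subset\Delta^+_1\setminus S}\frac{\jJ_W\bigl(e^{\lambda+\rho_0-\ssum(U)}\bigr)}{R_0\,e^{\rho_0}}.$$
By part (ii) of the preceding lemma, each summand on the right lies in $\Ch(\fg_0)$; hence so does the left-hand side, which handles the first assertion of the corollary.

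For the dimension formula, I would invoke part (iii) of the preceding lemma on each summand, giving
$$\dim\frac{\jJ_W\bigl(e^{\lambda+\rho_0-\ssum(U)}\bigr)}{R_0\,e^{\rho_0}} \;=\; \prod_{\alpha\in\Delta_0^+}\frac{(\lambda+\rho_0-\ssum(U)\,|\,\alpha)}{(\rho_0\,|\,\alpha)},$$
and then sum over $U$. This is well-defined even when $\lambda+\rho_0-\ssum(U)$ is $W$-singular, since by part (i) of the lemma both sides vanish in that case (some factor $(\lambda+\rho_0-\ssum(U)|\alpha)$ is zero on the right, and $\jJ_W(e^{\lambda+\rho_0-\ssum(U)})$ is zero on the left).

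For the decomposition into simple $\fg_0$-characters, the proof of the preceding lemma actually establishes more: for any $\nu\in P(\fg_0)$,
$$\frac{\jJ_W(e^{\nu})}{R_0\,e^{\rho_0}} \;=\; \sum_{\mu}\sum_{w\in W}\sgn(w)\,\delta_{w(\mu+\rho_0),\,\nu}\,\ch L_{\fg_0}(\mu),$$
where $\mu$ ranges over weights with $L_{\fg_0}(\mu)$ finite-dimensional (so that $\mu+\rho_0$ lies in $P^{++}(\fg_0)$, hence is $W$-regular and makes the Kronecker symbol pick out a unique $w$ when $\nu$ is regular, and contributes $0$ when $\nu$ is singular). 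Specializing $\nu=\lambda+\rho_0-\ssum(U)$, summing over $U\subset\Delta_1^+\setminus S$ and collecting the coefficient of $\ch L_{\fg_0}(\mu)$ gives the stated formula for $n_{\lambda\mu}$.

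There is no real obstacle: the corollary is a direct termwise application of the preceding lemma to the already-derived expansion. The only point requiring care is to notice that the identity already derived before the corollary needs the shift $\lambda\to\lambda+\rho$ with the bookkeeping $\rho+\rho_1=\rho_0$, and that the contributions of $W$-singular $\nu$ vanish consistently on both sides of the dimension formula.
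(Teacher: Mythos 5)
Your proposal is correct and follows exactly the route the paper intends: substitute $\lambda\mapsto\lambda+\rho$ (so $\lambda+\rho+\rho_1=\lambda+\rho_0$) in the expansion $\frac{\KW(\lambda,S)}{Re^{\rho}}=\sum_{U\subset\Delta^+_1\setminus S}\frac{\jJ_W(e^{\lambda+\rho_1-\ssum(U)})}{R_0e^{\rho_0}}$ displayed just before the corollary, and apply the preceding lemma (parts (ii), (iii) and the Weyl-character-formula identification $\jJ_W(e^{w(\mu+\rho_0)})/(R_0e^{\rho_0})=\sgn(w)\ch L_{\fg_0}(\mu)$, with vanishing in the singular case) termwise. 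The paper leaves the corollary as an immediate consequence of exactly these ingredients, so there is nothing further to add.
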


\subsubsection{Example}
If $\ch L$ is given by the Kac-Wakimoto formula 
$$Re^{\rho}\ch L=j^{-1}\KW(\lambda+\rho, S),$$
then
$$\dim L=j^{-1}\sum_{U\subset\Delta_1^+\setminus S}\prod_{\alpha\in\Delta^+_0}\frac{(\lambda+\rho_0-\sum_{\beta\in U}\beta|\alpha)}
{(\rho_0|\alpha)}.$$

\subsubsection{$L(\lambda)$ as a $\fg_0$-module}\label{Lasg0}
A Verma module $M(\lambda)$ has a filtration with the factors of the form
$\{M_{\fg_0}(\lambda-\ssum(U))\}_{U\subset\Delta^+_1}$. Notice that if 
$M_{\fg_0}(\lambda-\ssum(U))$ has a finite-dimensional quotient, then this 
quotient is $L_{\fg_0}(\lambda-\ssum(U))$. Hence 
$[L(\lambda):L_{\fg_0}(\lambda-\mu)]\not=0$ implies
$\mu=\ssum(U)$ for some $U\subset \Delta^+_1$.
The multiplicity $m_{\lambda;U}:=[L(\lambda):L_{\fg_0}(\lambda-\ssum(U))]$ can be computed using~\Cor{cor123}:
\begin{equation}\label{multik}
m_{\lambda;U}=\sum_{\mu}  (-1)^{||\lambda||-||\mu||}d_{<}^{\lambda,\mu} 
\sum_{U'\subset\Delta^+_1\setminus S_{\mu}} j_{\mu}^{-1} \sum_{w\in W} \sgn(w)\delta_{w(\lambda+\rho_0-\ssum(U)),\mu^{\dagger}+\rho_1-\ssum(U')}.\end{equation}
For the $\osp$-case this gives
$$m_{\lambda;U}=\sum_{\mu}  (-1)^{||\lambda||-||\mu||}d_{<}^{\lambda,\mu} 
\sum_{U'\subset\Delta^+_1\setminus S_{\mu}} j_{\mu}^{-1} \sum_{w\in W} \sgn(w)\delta_{w(\lambda+\rho_0-\ssum(U)),\mu+\rho_0-\ssum(U')}.$$

\subsubsection{Remark}\label{xiversion}
A variation of the above reasoning allows to find the
{\em graded multiplicities}
$$[L(\nu)_0:L_{\fg_0}(\mu)]+\xi [L(\nu)_1:L_{\fg_0}(\mu)]$$
using the Gruson-Serganova character formula. In order to do this
we  define the graded version of $\KW(\lambda,S)$ by the following procedure.

Let $\xi$ be a formal (even) variable satisfying $\xi^2=1$.
We denote by  $\Ch_{\xi}(\fg)$  the ring of $\xi$-characters of
the finite-dimensional $\fg$-modules and view
 $\Ch_{\xi}(\fg)$  as a subring of $\cR[\xi]$.
For $\nu\in\mathbb{Z}\Delta$ 
consider the map $\Xi:e^{\nu}\mapsto \xi^{p(\nu)}e^{\nu}$ and
extend this map to the rational functions $\frac{P}{Q}$, where $P,Q$
are polynomials in $e^{\nu}$ with $\nu\in\mathbb{Z}\Delta$.
This allows to define for $\lambda\in\fh^*_{int}$ the term
$\KW_{\xi}(\lambda,S)$ by the formula
$$\KW_{\xi}(\lambda,S):=e^{\lambda}\Xi\bigl(e^{-\lambda}\KW(\lambda,S)\bigr).$$
Note that $\KW_{\xi}(\lambda,S)$ and $\Xi(R^{\pm 1})$  lie in the
ring $\cR[\xi]$ and can be viewed as elements of
 $\cR_{\Sigma}[\xi]$. 
Taking $\lambda\in P(\fg_0)$ we have
$$\frac{\KW_{\xi}(\lambda+\rho,S)}{\Xi(R)e^{\rho}}=\frac{J_W\bigl(e^{\lambda+\rho_0}\displaystyle\prod_{\beta\in\Delta_1^+\setminus S} (1+\xi e^{-\beta}) \bigr)}{R_0e^{\rho_0}}\in \Ch_{\xi}(\fg_0).$$
The graded multiplicity
$[\frac{\KW(\lambda+\rho,S)}{\Xi(R)e^{\rho}}:\ch L_{\fg_0}(\mu)]$
is given by
$$\sum_{U\subset\Delta^+_1\setminus S} \xi^{\# U}\sum_{w\in W} \sgn(w)
\delta_{w(\mu+\rho_0),\lambda+\rho_0-\ssum(U)}.$$
 The Gruson-Serganova formula~(\ref{GSformula}) gives the following formula for
$\ch_{\xi} L$:
\begin{equation}\label{grGrS}
\Xi(R)e^{\rho}\ch_{\xi} L=\frac{\prod_{\alpha\in\Delta_0^+}(1-e^{-\alpha})}{\prod_{\alpha\in\Delta_1^+}(1+\xi e^{-\alpha})}
e^{\rho}\ch_{\xi} L=\sum_{L'\in \Irr} \pm  b_{L,L'} \KW(L'),\end{equation}
(where $L=L(\lambda), L'=L(\nu)$ and the sign $\pm$  is given by $(-1)^{p(\lambda^{\dagger}-\nu^{\dagger})}$);
combining the above formulas one obtains an analogue of ~(\ref{multik}) for the graded multiplicity
of $L_{\fg_0}(\mu)$ in $L(\lambda)$.

\subsection{The map $\pr$}\label{projpr}
Let $\fg$ be $\fgl(m|n)$ or $\osp(M|2n)$.
Fix an odd root $\beta_0$ of the form $\beta_0=\pm(\vareps_p-\delta_q)$.

Let $e^{\mu},\mu\in\fh^*$ be a basis of the group algebra
$\mathbb{C}[\fh^*]$. Consider a projection
$\pr: \mathbb{C}[\fh^*] \to \mathbb{C}[\fh^*]$
given by
$$\pr(e^{a\vareps_p}):=1,\ \ \pr(e^{a\delta_q}):=e^{i\pi a},\ \ \ \pr(e^{a\vareps_t}):=e^{a\vareps_{t}},\ \ \ \pr(e^{a\delta_j}):=e^{a\delta_j}$$
for any $a\in\mathbb{C}$ and the indeces $t\not=p, j\not=q$.
Note that $\pr$ is an algebra homomorphism and
$\pr(e^{\beta_0})=-1$.
We extend $\pr$ to the rational functions of the form $\frac{P}{Q}$, where
$P,Q\in \mathbb{C}[\fh^*]$ is such that $\pr (Q)\not=0$.

Since $\pr$ is an algebra homomorphism for each $\lambda\in\fh^*$ one has
\begin{equation}\label{prs0}
\pr(\KW(\lambda,\emptyset)(1+e^{-\beta_0}))=0.
\end{equation}

\subsubsection{}\label{A21}
Take a  non-zero vector $x\in\fg_{\beta_0}$.
Identify
 $\fg':=\DS_x(\fg)$ with the subalgebra of $\fg$
(recall that $\fg'=\fgl(m-1|n-1)$ for $\fg=\fgl(m|n)$, $\fg'=\osp(M-2|2n-2)$ for $\fg=\osp(M|2n)$). Recall that $\fh'=\fg'\cap\fh$ is a Cartan subalgebra of $\fg'$.

Observe that
$$\pr(e^{\lambda})=c_{\lambda} e^{\lambda|_{\fh'}}\ \text{ for }\
c_{\lambda}:=e^{-\pi i (\lambda|\delta_q)} $$
and that the restriction of
$\pi\pr\pi$ to the supercharacter ring $\jJ(\fg)$ is equal to $\ds_x$ \cite{CHR}.

\subsubsection{}\label{assxi}
Set
$$\xi:=\left\{
\begin{array}{l}
\frac{1}{2}(\sum_{i=1}^m\vareps_i-\sum_{i=1}^n\delta_i)\ \text{ for } \osp(2m+1|2n),\\
0 \text{ otherwise. }
\end{array}\right.$$
Notice that $\xi|_{\fh'}$ is equal to the vector $\xi$ defined for $\fg'$;
we denote this vector by $\xi'$.

We assume that an iso-set $S$ and $\lambda\in \fh^*$ satisfy
\begin{equation}\label{assmSla}
\beta_0\in S\subset (S_m\times S_n)\beta_0,\ \ \ \ \
(\lambda-\xi|\fh(S)^*)=0.\end{equation}
and set
$$S':=S\setminus\{\beta_0\},\ \ \ \lambda':=\lambda|_{\fh'}.$$
By above,
$\pr(e^{\lambda})=e^{-\pi i (\lambda|\delta_q)}e^{\lambda'}$;
observe that $(\lambda|\delta_q)=0$ (resp., $(\lambda|\delta_q)=-1/2$)
for  $\osp(2m|2n)$ (resp., $\osp(2m+1|2n)$).

\subsubsection{}
\begin{prop}{proppr}
Let $S,\lambda$ be as in (\ref{assmSla}).

For  $\fg=\osp(2m|2n)$ with $m>1$ and $|S|=1$ one has
$$\pr(\KW(\lambda,S)(1+e^{-\beta_0}))=\KW(\lambda',\emptyset)
+\KW((\lambda')^{\sigma},\emptyset).
$$
For other cases
$$\pr\bigl(\KW(\lambda,S)(1+e^{-\beta_0})\bigr)=ae^{-\pi i (\lambda|\delta_q)}\KW(\lambda',S'),$$
where $a=|S|$ for $\fg=\osp(2|2n),\fgl(m|n)$ and
$a=2|S|$ for $\osp(2m+1|2n)$ and for $\osp(2m|2n)$ with $m,|S|>1$.

\end{prop}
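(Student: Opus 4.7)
The plan is to expand $(1+e^{-\beta_0})\KW(\lambda,S)$ term by term as a sum over $w\in W$ and analyze which summands survive application of $\pr$. Explicitly,
\[
(1+e^{-\beta_0})\KW(\lambda,S)=\sum_{w\in W}\sgn(w)\,\frac{(1+e^{-\beta_0})\,e^{w\lambda}}{\prod_{\beta\in S}(1+e^{-w\beta})}.
\]
The first step will be to notice that $\pr(1+e^{-\alpha})=0$ precisely when $\alpha\in B:=\{\pm\vareps_p\pm\delta_q\}\cap\Delta_1$, so a summand contributes nontrivially only if $wS\cap B\neq\emptyset$. Since $wS$ is linearly independent and any two non-antipodal elements of $B$ are non-orthogonal, $|wS\cap B|\leq 1$; for such $w$, the ``bad'' factor $(1+e^{-w\beta})$ in the denominator combines with $(1+e^{-\beta_0})$ in the numerator to give a rational prefactor that is well-defined under $\pr$.

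Next, I would let $W'\subset W$ denote the stabilizer of $\beta_0$, which by~\ref{A21} coincides with the Weyl group of $\fg'$ acting on $\fh'$. Decompose $W$ into cosets $W'\sigma_{i,\gamma}$ indexed by pairs $(\beta_i,\gamma)$ with $\beta_i\in S$ and $\gamma\in B\cap W\beta_i$, where $\sigma_{i,\gamma}\in W$ is chosen with $\sigma_{i,\gamma}\beta_i=\gamma$. The assumption $(\lambda-\xi|\fh(S)^*)=0$ will ensure that $\sigma_{i,\gamma}\lambda$ agrees with $\lambda$ modulo $\fh'$-orthogonal terms, so after cancelling the bad factor (which by~\Lem{lemprlambdaS}(iii) and a direct computation reduces, under $\pr$, to a scalar $\pm 1$) and applying $\pr$---which by~\ref{A21} restricts to $\fh'$ and introduces the scalar $e^{-\pi i(\lambda|\delta_q)}$, since $W'$ fixes $\vareps_p$ and $\delta_q$---each coset contributes $\sgn(\sigma_{i,\gamma})\,e^{-\pi i(\lambda|\delta_q)}\,\KW(\lambda',S')$ to the final sum.

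The value of $a$ then reduces to counting the contributing cosets. In the $\fgl$ case, $W$ acts by permutations only, so $wS$ consists of roots of the form $\vareps_i-\delta_j$ and $wS\cap B\subset\{\beta_0\}$: only the $|S|$ cosets with $\gamma=\beta_0$ contribute, yielding $a=|S|$. For $\osp(2|2n)$ the orthogonal factor of the Weyl group is trivial and again no sign flips are available, so $a=|S|$. For $\osp(2m+1|2n)$ and $\osp(2m|2n)$ with $m,|S|>1$, each $\beta_i$ may additionally be sent to $-\beta_0$ via a sign flip (whose contribution is rescaled by $\pr(e^{-\beta_0})=-1$ and reabsorbed via~\Lem{lemprlambdaS}(iv)), doubling the count to $a=2|S|$; the further possibilities $\gamma=\pm(\vareps_p+\delta_q)$ collapse onto $\pm\beta_0$ via the same lemma.

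The special case $\fg=\osp(2m|2n)$ with $m>1$ and $|S|=1$ will require separate treatment: here $S=\{\beta_0\}$, $S'=\emptyset$, and the Weyl group of $SO(2m)$ admits only even numbers of $\vareps$-sign flips. Realizing $w\beta_0=-\beta_0$ therefore forces an additional sign flip of some $\vareps_{p'}$ with $p'\neq p$, whose effect on $\lambda'$ is an uncompensated sign flip of a single $\vareps'$-coordinate. Averaging these contributions over $p'$ and over $W'$ will produce $\KW(\lambda',\emptyset)+\KW((\lambda')^\sigma,\emptyset)$, corresponding to the partition of the cosets into the two $W'$-orbits separated by the involution $\sigma$ of~\ref{sigma}. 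I expect this last piece---the careful tracking of the even-parity sign-flip constraint in $\osp(2m|2n)$ with $m>1$, which forces the appearance of $(\lambda')^\sigma$---to be the main technical obstacle, since it is precisely this constraint that distinguishes the special case from the uniform formula.
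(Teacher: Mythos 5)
Your strategy is the same as the paper's: expand $\KW(\lambda,S)(1+e^{-\beta_0})$ as a sum over $w\in W$, discard the summands killed by $\pr$, decompose the surviving $w$ into cosets of $W'=\Stab_W\beta_0$, count the cosets to get $a$, and treat $\osp(2m|2n)$ with $|S|=1$ separately because only even numbers of $\vareps$-sign changes are available, which is what produces $(\lambda')^{\sigma}$. But two steps, as you state them, do not work. First, after (correctly, and indeed more carefully than the paper's own observation) enlarging the kill-set to $B=\{\pm\vareps_p\pm\delta_q\}\cap\Delta_1$, you claim that whenever $wS\cap B\neq\emptyset$ the bad denominator factor ``combines with $(1+e^{-\beta_0})$ to give a rational prefactor that is well-defined under $\pr$''. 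That is true only when $wS$ contains $\pm\beta_0$: then the quotient is $1$ or $e^{-\beta_0}$. If instead some $w\beta=\pm(\vareps_p+\delta_q)$, the quotient $(1+e^{-\beta_0})/(1+e^{\mp(\vareps_p+\delta_q)})$ does not simplify, and its value under $\pr$ is a genuine $0/0$ indeterminacy (the limits along $e^{\vareps_p}\to 1$ and along $e^{\delta_q}\to -1$ differ). Your later remark that these possibilities ``collapse onto $\pm\beta_0$ via the same lemma'' is not an argument: \Lem{lemprlambdaS}(iv) concerns shifting $\lambda$ by an element of $S$ and says nothing about such $w$. The paper's proof never meets these terms because it argues at the outset that only $w$ with $\pm\beta_0\in wS$ contribute; if you work with the larger set $B$ you must show, e.g.\ by an explicit pairing inside each $W'$-coset, that the total contribution of the $w$ with $\pm(\vareps_p+\delta_q)\in wS$ disappears, and you have not done this.

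Second, the quantitative heart of the statement is not established in your sketch. The value $a=2|S|$ in the $\osp(2m+1|2n)$ and $\osp(2m|2n)$, $|S|>1$ cases comes from an explicit representative $w_-$ with $w_-\beta_0=-\beta_0$ ($w_-=r_{\vareps_p}r_{\delta_q}$, resp.\ $w_-=r_{\vareps_i}r_{\delta_j}r_{\vareps_p}r_{\delta_q}$), and the crux is the sign bookkeeping: the factor $e^{-\beta_0}$ contributes $-1$ under $\pr$, and this must be compensated either by $\pr(e^{w_-\lambda})=-c\,e^{\lambda'}$ (type $B$, where the hypothesis $(\lambda-\xi|\fh(S)^*)=0$ enters precisely to give $w_-\lambda=\lambda+\beta_0$) or by \Lem{lemprlambdaS}(iv) (type $D$ with $|S|>1$, where $w_-$ sends $\beta_1\in S'$ to $-\beta_1$). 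Without this verification one cannot tell $a=2|S|$ from a cancellation giving $a=0$; saying the contribution is ``reabsorbed'' begs the question. Finally, in the case $\osp(2m|2n)$, $|S|=1$, there is no averaging over $p'$: since $r_{\vareps_{p'}}r_{\vareps_{p''}}\in W'$ for $p',p''\neq p$, all elements $r_{\vareps_{p'}}r_{\vareps_p}r_{\delta_q}$ lie in a single coset $W'w_-$, and $\{w\,:\,w\beta_0=-\beta_0\}$ is exactly this one coset, contributing $\KW((\lambda')^{\sigma},\emptyset)$ once; summing over all $p'\neq p$ as written would overcount by a factor $m-1$.
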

\begin{proof}
Denote by
$W'$ the Weyl group of $\fg'$ and notice that
$W'=\Stab_W \beta_0$.
Set
$$s:=|S|,\ \ \ \ c:=e^{-\pi i (\lambda|\delta_q)}.$$

One has
$$\begin{array}{rl}
\pr\bigl(\KW(\lambda,S)(1+e^{-\beta_0})\bigr)
&=\displaystyle\sum_{w\in W}\sgn(w)  y(w),\\
  \text{ where }\ \ \ \ &
y(w):=
 \pr\bigl(\frac{e^{w\lambda}(1+e^{-\beta_0})}{\displaystyle\prod_{\beta\in S}(1+e^{-w\beta})}\bigr).\end{array}$$
Observe  that $\pr(1+e^{\alpha})=0$ for $\alpha\in\Delta$ is equivalent to
 $\alpha=\pm\beta_0$.
Since $\pr$ is an algebra homomorphism, this gives
$y(w)=0$ if $\pm\beta_0\not\in wS$.
Therefore
$$\pr(\KW(\lambda,S)(1+e^{-\beta_0}))=Y_++Y_-,$$
where
$$Y_{\pm}:=\displaystyle\sum_{w\in W: \pm\beta_0\in wS}\sgn(w)  y(w) .$$

Each $\beta\in S'$  can be written as
$\beta_0=w_{\beta}\beta$ for
   $w_{\beta}:=r_{\vareps_i-\vareps_p}r_{\delta_j-\delta_q}$.
   Setting $w_{\beta_0}:=Id$ we have $\sgn(w_{\beta})=1$ and
 $$w_{\beta}\lambda=\lambda,\ \  w_{\beta}\beta_0=\beta,\
 w_{\beta}\beta=\beta_0,\ \
 w_{\beta}(\beta')=\beta'\ \text{ for } \beta'\in S\setminus\{\beta,\beta_0\}.
 $$
for each $\beta\in S$.
 The operator $\pr$ commutes with the action of $w'$ for $w'\in W'$. This gives
$$y(w'w_{\beta})=\frac{\pr(e^{w'w_{\beta}\lambda})}{\displaystyle\prod_{\beta'\in S'}
(1+e^{-w'\beta'})}=w'\bigl(\frac{\pr(e^{\lambda})}{\displaystyle\prod_{\beta'\in S'}
(1+e^{-\beta'})}\bigr)\ \text{ for any }\ w'\in W'.$$
Since $W'=\Stab_{W}\beta_0$ one has
$\{w\in W|\ \beta_0\in WS\}= W'w_{\beta}$.

By \ref{assxi}, $\pr(e^{\lambda})=ce^{\lambda'}$. Summarizing we obtain
$$
Y_+=
\displaystyle\sum_{\beta\in S}\sum_{w'\in W'}\sgn(w') y(w'w_{\beta})=cs\jJ_{W'}\bigl(\frac{e^{\lambda}}{\displaystyle\prod_{\beta'\in S'}
(1+e^{-\beta'})}\bigr),$$
that is $Y_+=cs\KW(\lambda';S')$.

For $\fg=\fgl(m|n),\osp(2|2n)$ the set $WS$ does not contain $-\beta_0$ so
$Y_-=0$;
this
completes the proof for these cases.

 For the remaining cases $\fg=\osp(M|N)$  with $M>2$ the set $WS$ contains
 $-\beta_0$. For $\fg=\osp(2m|2n)$ with $s>1$ we fix
$\beta_1:=\pm(\vareps_i-\delta_j)\in S'$;
 for $\osp(2m|2n)$ with $m>1$ and $s=1$ we set $i:=m$ if $p\not=m$ and $i:=m-1$ if
 $p=m$. We set
 $$w_-:=\left\{\begin{array}{ll}
 r_{\vareps_p}r_{\delta_q} & \text{ for }\ \ \fg=\osp(2m+1|2n)\\
 r_{\vareps_i}r_{\delta_j}r_{\vareps_p}r_{\delta_q} & \text{ for }\ \
 \fg=\osp(2m|2n),\ s>1\\
 r_{\vareps_i}r_{\vareps_p}r_{\delta_q}& \text{ for }\ \
 \fg=\osp(2m|2n),\ s=1.
 \end{array}
 \right.$$
Notice that $w_-\in W$ and
$w_-\beta_0=-\beta_0$. Therefore
$$\ \ \
\{w\in W|\ -\beta_0\in wS\}=\coprod_{\beta\in S}
W'w_-w_{\beta}$$
and thus
$$
Y_-=\sum_{\beta\in S}\sum_{w'\in W'} \sgn(w'w_-) y(w'w_-w_{\beta})
$$
For $w'\in W'$ we have
$$y(w'w_-w_{\beta})=\pr\bigl(\frac{e^{w'w_-\lambda}(1+e^{-\beta_0})}
{\displaystyle\prod_{\beta\in S}
(1+e^{-w'w_-\beta})}\bigr)=-w'\pr\bigl(\frac{e^{w_-\lambda}}{\displaystyle\prod_{\beta\in S'}
(1+e^{-w_-\beta})}\bigr).$$
Therefore
$$Y_-=-s\sgn(w_-)\jJ_{W'}\bigl(\pr\bigl(\frac{e^{w_-\lambda}}{\displaystyle\prod_{\beta\in S'}
(1+e^{-w_-\beta})}\bigr)\bigr).
$$

For $\osp(2m+1|2n)$ one has
$w_-S'=S'$ and
$w_-\lambda=\lambda+\beta_0$, that is $\pr(e^{w_-\lambda})=-ce^{\lambda'}$.
Therefore $Y_-=cs\KW(\lambda',S')$ as required.

For $\osp(2m|2n)$  with $s>1$ one has
$$w_-\lambda=\lambda,\ \ \ w_-S'=(S'\cup\{-\beta_1\})\setminus\{\beta_1\}.$$
 Using \Lem{lemprlambdaS}  we get
$$Y_-=-s\KW(\lambda',S'\cup\{-\beta_1\})\setminus\{\beta_1\}=s|\KW(\lambda',S').$$

For the remaining case
 $\osp(2m|2n)$ with $m>1$ and $S=\{\beta_0\}$ we have
 $$\sgn(w_-)=-1,\ \ \ \pr(e^{w_-\lambda})=e^{r_{\vareps_j}\lambda'},$$
 that is $Y_-=\KW(r_{\vareps_i}\lambda',\emptyset)$.
Since $r_{\vareps_i}\lambda'=(\lambda')^{\sigma}$
this completes the proof.
\end{proof}

\end{document}